\setlist[itemize]{label={\(\boldsymbol\cdot\)}}
\newcommand{\pushright}[1]{\ifmeasuring@#1\else\omit\hfill\(\displaystyle#1\)\fi\ignorespaces}
\newcommand{\pushleft}[1]{\ifmeasuring@#1\else\omit\(\displaystyle#1\)\hfill\fi\ignorespaces}
\renewcommand{\norm}[1]{\|#1\|}
\newcommand{\normI}[1]{\left\|#1\right\|_{\scriptscriptstyle 1}}
\newcommand{\normIV}[1]{\left\|#1\right\|_{\scriptscriptstyle 4}}
\newcommand{\normsup}[1]{\left\|#1\right\|_{\scriptscriptstyle\infty}}
\newcommand{\setof}[2]{\{#1\,:\,#2\}}
\newcommand{\Bsetof}[2]{\Bigl\{#1\,:\,#2\Bigr\}}
\newcommand{\given}{\,|\,}
\newcommand{\comp}{\mathrm{c}}
\newcommand{\iid}{i.i.d.\xspace}
\newcommand{\Ham}{\mathscr{H}}
\newcommand{\Ising}{\mathrm{Ising}}
\newcommand{\IsingPosFie}{\mathrm{IPF}}
\newcommand{\Potts}{\mathrm{Potts}}
\newcommand{\FK}{\mathrm{FK}}
\newcommand{\Bern}{\mathrm{Bern}}
\newcommand{\XY}{\mathrm{XY}}
\newcommand{\GFF}{\mathrm{GFF}}
\newcommand{\SAW}{\mathrm{SAW}}
\newcommand{\saw}{\mathsf{SAW}}
\newcommand{\KRW}{\mathrm{KRW}}
\newcommand{\walk}{\mathsf{W}}
\newcommand{\sat}{\mathrm{sat}}
\newcommand{\IF}[1]{\mathds{1}_{\{#1\}}}
\newcommand{\scrN}{\mathscr{N}}
\newcommand{\UnitBall}{\mathscr{U}}
\newcommand{\Wulff}{\mathscr{W}}
\newcommand{\surcharge}{\mathfrak{s}}
\newcommand{\cone}{\mathscr{Y}}
\newcommand{\scrC}{\mathscr{C}}
\newcommand{\betac}{\beta_{\mathrm{c}}}
\newcommand{\lambdac}{\lambda_{\mathrm{c}}}
\newcommand{\lambdaqlr}{\lambda_{\mathrm{\mathrm{exp}}}}
\theoremstyle{plain}
\newtheorem{theorem}{Theorem}[section]
\newtheorem{lemma}[theorem]{Lemma}
\newtheorem{corollary}[theorem]{Corollary}
\newtheorem{conjecture}[theorem]{Conjecture}
\newtheorem{remark}{Remark}[section]
\newtheorem{claim}{Claim}
\newtheorem{open}[theorem]{Open problem}
\theoremstyle{definition}
\newtheorem{obs}{Observation}
\newcommand{\calT}{\mathcal{T}}
\newcommand{\bbG}{\mathbb{G}}
\newcommand{\bbN}{\mathbb{N}}
\newcommand{\bbR}{\mathbb{R}}
\newcommand{\bbS}{\mathbb{S}}
\newcommand{\bbZ}{\mathbb{Z}}
\newcommand{\sfo}{{\mathsf o}}
\newcommand{\sfB}{\mathsf{B}}
\newcommand{\sfO}{\mathsf{O}}
\newcommand{\be}[1]{\begin{equation}\label{#1}}
	\newcommand{\ee}{\end{equation}}
\renewcommand{\epsilon}{\varepsilon}
\author{Yacine Aoun}
\address{Section de Mathématiques, Université de Genève, CH-1211 Genève, Switzerland}
\email{Yacine.Aoun@unige.ch}
\author{Dmitry Ioffe}
\address{Faculty of IE\&M, Technion, Haifa 32000, Israel}
\email{ieioffe@ie.technion.ac.il}
\author{S\'{e}bastien Ott}
\address{Dipartimento di Matematica e Fisica, Univ. Roma Tre, 00146 Roma, Italy}
\email{ott.sebast@gmail.com}
\author{Yvan Velenik}
\address{Section de Mathématiques, Université de Genève, CH-1211 Genève, Switzerland}
\email{yvan.velenik@unige.ch}
\date{\today}
\title[Non-analyticity of the correlation length]{Non-analyticity of the correlation length\\in systems with\\ exponentially decaying interactions}
\begin{document}

\maketitle

\begin{abstract}
	We consider a variety of lattice spin systems (including Ising, Potts and XY models) on \(\mathbb{Z}^d\) with long-range interactions of the form \(J_x = \psi(x) e^{-|x|}\), where \(\psi(x) = e^{\sfo(|x|)}\) and \(|\cdot|\) is an arbitrary norm.
	
	We characterize explicitly the prefactors \(\psi\) that give rise to a correlation length that is not analytic in the relevant external parameter(s) (inverse temperature \(\beta\), magnetic field \(h\), etc).
	Our results apply in any dimension.
	
	As an interesting particular case, we prove that, in one-dimensional systems, the correlation length is non-analytic whenever \(\psi\) is summable, in sharp contrast to the well-known analytic behavior of all standard thermodynamic quantities.

	We also point out that this non-analyticity, when present, also manifests itself in a qualitative change of behavior of the 2-point function.
	In particular, we relate the lack of analyticity of the correlation length to the failure of the \emph{mass gap condition} in the Ornstein--Zernike theory of correlations.
\end{abstract}


\section{Introduction and results}


%
\subsection{Introduction.}
%

The correlation length plays a fundamental role in our understanding of the properties of a statistical mechanical system.
It measures the typical distance over which the microscopic degrees of freedom are strongly correlated.
The usual way of defining it precisely is as the inverse of the rate of exponential decay of the 2-point function.
In systems in which the interactions have an infinite range, the correlation length can only be finite if these interactions decay at least exponentially fast with the distance.
Such a system is then said to have \emph{short-range} interactions.\footnote{While the terminology ``short-range'' \textit{vs.} ``long-range'' appears to be rather unprecise, different authors meaning quite different things by these terms, there is agreement on the fact that interactions decreasing exponentially fast with the distance are short-range.}

It is often expected that systems with short-range interactions all give rise to qualitatively similar behavior.
This then serves as a justification for considering mainly systems with nearest-neighbor interactions as a (hopefully generic) representant of this class.

As a specific example, let us briefly discuss one-dimensional systems with short-range interactions.
For those systems, the pressure as well as all correlation functions are always analytic functions of the interaction parameters.
A proof for interactions decaying at least exponentially fast was given by Ruelle~\cite{Ruelle-1975}, while the general case of interactions with a finite first moment was settled by Dobrushin~\cite{Dobrushin-1974} (see also~\cite{Cassandro+Olivieri-1981}).
This is known \emph{not} to be the case, at least for some systems, for interactions decaying
even slower with the distance~\cite{Dyson-1969, Frohlich+Spencer-1982}.

\medskip
In the present work, we consider a variety of lattice systems with exponentially decaying interactions.
We show that, in contrast to the expectation above, such systems can display qualitatively different behavior \emph{depending on the properties of the sub-exponential corrections}.

Under weak assumptions, the correlation length associated with systems whose interactions decay faster than any exponential tends to zero as the temperature tends to infinity.
In systems with exponentially decaying interactions, however, this cannot happen: indeed, the rate of exponential decay of the 2-point function can never be larger than the rate of decay of the interaction.
This suggests that, as the temperature becomes very large, one of the two following scenarii should occur: either there is a temperature \(T_{\sat}\) above which the correlation length becomes constant, or the correlation length asymptotically converges, as \(T\to\infty\), to the inverse of the rate of exponential decay of the interaction.
Notice that when the first alternative happens, the correlation length cannot be an analytic function of the temperature.

It turns out that both scenarios described above are possible.
In fact, both can be realized in the same system by considering the 2-point function in different directions.
What determines whether saturation (and thus non-analyticity) occurs is the correction to the exponential decay of the interactions.
We characterize explicitly the prefactors that give rise to saturation of the correlation length as a function of the relevant parameter (inverse temperature \(\beta\), magnetic field \(h\), etc).
Our analysis also applies to one-dimensional systems, thereby showing that the correlation length of one-dimensional systems with short-range interactions can exhibit a non-analytic behavior, in sharp contrast with the standard analyticity results mentioned above.

We also relate the change of behavior of the correlation length to a violation of the mass gap condition in the theory of correlations developed in the early 20th Century by Ornstein and Zernike, and explain how this affects the behavior of the prefactor to the exponential decay of the 2-point function.

%
\subsection{Convention and notation}
%

In this paper, \(|\cdot|\) denotes some arbitrary norm on \(\bbR^d\), while we reserve \(\|\cdot\|\) for the Euclidean norm.
The unit sphere in the Euclidean norm is denoted \(\bbS^{d-1}\). Given \(x\in\bbR^d\), \([x]\) denotes the (unique) point in \(\bbZ^d\) such that \(x\in [x]+[-\frac12,\frac12)^d\).
To lighten notation, when an element \(x\in\bbR^d\) is treated as an element of \(\bbZ^d\), it means that \([x]\) is considered instead.

%
\subsection{Framework and models}
%

For simplicity, we shall always work on \(\bbZ^d\), but the methods developed in this paper should extend in a straightforward manner to more general settings.
We consider the case where the interaction strength between two lattice sites \(i,j\) is given by \(J_{ij}=J_{i-j}=\psi(i-j)e^{-|i-j|}\), where \(|\cdot|\) is some norm on \(\bbR^d\); we shall always assume that both \(\psi\) and \(|\cdot|\) are invariant under lattice symmetries. 
We will suppose \(\psi(y) >0\) for all \(y\neq 0\) to avoid technical issues.
We moreover require that \(\psi\) is a sub-exponential correction, that is,
\begin{equation}
	\lim_{|y|\to\infty} \frac{1}{|y|} \log(\psi(y)) =0.
	\label{eq:psi_subexp}
\end{equation}

The approach developed in this work is rather general and will be illustrated on various lattice spin systems and percolation models.
We will focus on suitably defined \emph{2-point functions} \(G_\lambda(x,y)\) (sometimes truncated), where \(\lambda\) is some external parameter.
We define now the various models that will be considered and give, in each case, the corresponding definition of \(G_\lambda\) and of the parameter \(\lambda\).

The following notation will occur regularly:
\begin{gather*}
	\bar{J} = \sum_{x\in\bbZ^d} J_{0x}, \quad P(x) = J_{0x}/\bar{J}.
\end{gather*}
By convention, we set \(\bar{J} = 1\) (and thus \(P(x) = J_{0x}\)), since the normalization can usually be absorbed into the inverse temperature or in a global scaling of the field, and assume that \(J_{00} =0\) (so \(\bar{J} = \sum_{x\in\bbZ^d\setminus\{0\}} J_{0x} = 1\)).
All models will come with a parameter (generically denoted \(\lambda\)). They also all have a natural transition point \(\lambdac\) (possibly at infinity) where the model ceases to be defined or undergoes a drastic change of behavior.

We will always work in a regime \(\lambda\in[0, \lambdaqlr)\) where \(\lambdaqlr\leq \lambdac\) is the point at which (quasi-)long range order occurs (see~\eqref{eq:lambdaqlr_def}) for the model. For all models under consideration, it is conjectured that \(\lambdaqlr = \lambdac\).

\subsubsection{KRW model}

A walk is a finite sequence of vertices \((\gamma_0, \dots, \gamma_n)\) in \(\bbZ^d\). The length of \(\gamma\) is \(\abs{\gamma} =n\). Let \(\walk(x,y)\) be the set of (variable length) walks with \(\gamma_0=x,\gamma_{\abs{\gamma}} = y\).
The 2-point function of the killed random walk (KRW) is defined by
\begin{equation}
	G^{\KRW}_{\lambda}(x,y) = \sum_{\gamma\in\walk(x,y)} \prod_{i=1}^{\abs{\gamma}} \lambda J_{\gamma_{i-1} \gamma_i}.
\end{equation}
\(\lambdac\) is defined by
\begin{equation*}
	\lambdac= \sup\Bsetof{\lambda\geq 0}{\sum_{x\in\bbZ^d} G^{\KRW}_{\lambda}(0,x) <\infty}.
\end{equation*}
Our choice of normalization for \(J\) implies that \(\lambdac=1\).

\subsubsection{SAW model}

Self-Avoiding Walks are finite sequences of vertices \((\gamma_0, \dots, \gamma_n)\) in \(\bbZ^d\) with at most one instance of each vertex (that is, \(i\neq j\implies\gamma_i\neq\gamma_j\)).
Denote \(\abs{\gamma} = n\) the length of the walk.
Let \(\saw(x,y)\) be the set of (variable length) SAW with \(\gamma_0=x,\gamma_{\abs{\gamma}}=y\).
We then let
\begin{equation}
	G^{\SAW}_{\lambda}(x,y) = \sum_{\gamma\in\saw(x,y)} \prod_{i=1}^{\abs{\gamma}} \lambda J_{\gamma_{i-1} \gamma_i}.
\end{equation}
\(\lambdac\) is defined by
\begin{equation*}
	\lambdac= \sup\Bsetof{\lambda\geq 0}{\sum_{x\in\bbZ^d} G^{\SAW}_{\lambda}(0,x) <\infty}.
\end{equation*}
Since \(G^{\SAW}_{\lambda}(x,y) \leq G^{\KRW}_{\lambda}(x,y)\), it follows that \(\lambdac^{\SAW}\geq \lambdac^{\KRW} = 1\).

\subsubsection{Ising model}

The Ising model at inverse temperature \(\beta\geq 0\) and magnetic field \(h\in\bbR\) on \(\bbZ^d\) is the probability measure on \(\{-1,+1\}^{\bbZ^d}\) given by the weak limit of the finite-volume measures (for \(\sigma\in\{-1,+1\}^{\Lambda_N}\) and \(\Lambda_N=[-N,N]^{d}\cap\mathbb{Z}^{d}\)).
\[
	\mu^{\Ising}_{\Lambda_N;\beta,h}(\sigma) = \frac{1}{Z_{\Lambda_N;\beta,h}^{\Ising}} e^{-\beta\Ham_N(\sigma)},
\]
with Hamiltonian
\[
	\Ham_N(\sigma) = -\sum_{\{i,j\}\subset\Lambda_N } J_{ij} \sigma_i\sigma_j - h\sum_{i\in\Lambda_N}\sigma_i
\]
and partition function \(Z_{\Lambda_N;\beta,h}^{\Ising}\).
The limit \(\mu^{\Ising}_{\beta,h}=\lim_{N\to\infty}\mu^{\Ising}_{\Lambda_N;\beta,h}\) is always well defined and agrees with the unique infinite-volume measure whenever \(h\neq 0\) or \(\beta<\betac\), the critical point of the model.

For this model, we will consider two different situations, depending on which parameter we choose to vary:
\begin{itemize}
	\item When \(h=0\), we consider
	\begin{equation}
		G^{\Ising}_{\beta}(x,y) = \mu^{\Ising}_{\beta,0}(\sigma_x\sigma_y)
		\quad\text{ and }\quad
		\lambda = \beta.
	\end{equation}
	In this case, \(\lambdac = \betac(d)\) marks the boundary of the high-temperature regime (\(\lim_{\norm{x}\to\infty}\mu^{\Ising}_{\beta,0}(\sigma_0\sigma_x) =0\) for \(\beta< \betac\) and is \(>0\) for \(\beta>\betac\)).
	\item When \(h>0\), we allow arbitrary values of \(\beta\geq 0\) and consider
	\begin{equation}
		G^{\IsingPosFie}_{\beta,h}(x,y) = \mu^{\Ising}_{\beta,h}(\sigma_x\sigma_y) - \mu^{\Ising}_{\beta,h}(\sigma_x)\mu^{\Ising}_{\beta,h}(\sigma_y)
		\quad\text{ and }\quad
		\lambda = e^{-h}.
	\end{equation}
	Of course, here \(\lambdac=1\).
	The superscript \(\IsingPosFie\) stands for ``Ising with a Positive Field''.
\end{itemize}

\subsubsection{Lattice GFF}

The lattice Gaussian Free Field with mass \(m\geq 0\) on \(\bbZ^d\) is the probability measure on \(\bbR^{\bbZ^d}\) given by the weak limit of the finite-volume measures (for \(\sigma\in\bbR^{\Lambda_N}\))
\[
	\dd\mu^{\GFF}_{m,\Lambda_N}(\sigma) = \frac{1}{Z_{m,\Lambda_N}^{\GFF}} e^{-\Ham_N(\sigma)-m^2\sum_{i\in\Lambda_N}\sigma_i^2 } \,\dd\sigma,
\]
with Hamiltonian
\[
	\Ham_N(\sigma) = -\sum_{\{i,j\}\subset\Lambda_N } J_{ij} (\sigma_i-\sigma_j)^2
\]
and partition function \(Z_{m,\Lambda_N}^{\GFF}\). Above, \(\dd\sigma\) denotes the Lebesgue measure on \(\bbR^{\Lambda_N}\).
The limit \(\mu^{\GFF}_{m}=\lim_{N\to\infty}\mu^{\GFF}_{m,\Lambda_N}\) exists and is unique for any \(m>0\).
When considering the measure at \(m=0\), we mean the measure \(\mu^{\GFF}=\lim_{m\downarrow 0} \mu^{\GFF}_{m}\).
The latter limit exists when \(d\geq 3\), but not in dimensions \(1\) and \(2\).

For this model, we define
\begin{equation}
	G^{\GFF}_{(1+m^2)^{-1}}(x,y) = \mu^{\GFF}_{m}(\sigma_x\sigma_y),\quad \lambda = \frac{1}{1+ m^2}.
\end{equation}
The 2-point function of the GFF has a nice probabilistic interpretation: let \(P\) be the probability measure on \(\bbZ^d\) given by \(P(x)=J_{0x}\).
Let \(P_x^{m}=P_{J,x}^m\) denote the law of the random walk started at \(x\) with killing \(\frac{m^2}{1+m^2}\) and \textit{a priori} \iid steps of law \(P\) and let \(E_x^m\) be the corresponding expectation.
Let \(X_i\) be the \(i\)th step and \(S_0=x,\ S_k= S_{k-1}+ X_k\) be the position of the walk at time \(k\).
Denote by \(T\) the time of death of the walk. One has \(P^m(T=k)=(1+m^2)^{-k} m^2\).
The 2-point function can then be expressed as
\begin{equation}\label{eq:GFF_RW_Rep_Cov}
	G_{\lambda}^{\GFF}(x,z) = \frac{1}{1+m^2}E_x^m\Big[\sum_{k=0}^{T-1} \IF{S_k=z}\Big].
\end{equation}
Thanks to the normalization \(\bar{J}=1\), it is thus directly related to the \(\KRW\) via the identity
\begin{equation}\label{eq:GFF_to_KRW}
	G_{\lambda}^{\GFF}(x,z) = \lambda G_{\lambda}^{\KRW}(x,z).
\end{equation}
In particular, \(\lambdac = 1\) (which corresponds to \(m=0\)) and
\(\sup_{x\in\bbZ^d} G_{\lambda}^{\GFF}(0,x) < \infty\) for all \(\lambda \in [0,\lambdac)\) in any dimension.

\subsubsection{Potts model and FK percolation}

The \(q\)-state Potts model at inverse temperature \(\beta\geq 0\) on \(\bbZ^d\) with free boundary condition is the probability measure on \(\{1, 2, \dots, q\}^{\bbZ^d}\) (\(q\geq 2\)) given by the weak limit of the finite-volume measures (for \(\sigma\in\{1, \dots, q\}^{\Lambda_N}\))
\[
	\mu^{\Potts}_{\Lambda_N;\beta,q}(\sigma) = \frac{1}{Z_{\Lambda_N;\beta,q}^{\Potts}} e^{-\beta\Ham_N(\sigma)}
\]
with Hamiltonian
\[
	\Ham_N(\sigma) = -\sum_{\{i,j\}\subset\Lambda_N } J_{ij} \IF{\sigma_i=\sigma_j}
\]
and partition function \(Z_{\Lambda_N;\beta,q}^{\Potts}\).
We write \(\mu^{\Potts}_{\beta,q} =\lim_{N\to\infty} \mu^{\Potts}_{\Lambda_N;\beta,q}\); this limit can be shown to exist.
From now on, we omit \(q\) from the notation, as in our study \(q\) remains fixed, while \(\beta\) varies.

For this model, we consider
\begin{equation}
	G^{\Potts}_{\beta}(x,y) = \mu^{\Potts}_{\beta}(\IF{\sigma_x=\sigma_y})- 1/q
	\quad\text{ and }\quad
	\lambda = \beta.
\end{equation}
As in the Ising model, we are interested in the regime \(\beta < \betac\), where \(\betac\) is the inverse temperature above which long-range order occurs (that is, \(\inf_{x}G^{\Potts}_{\beta}(0,x)>0\) for all \(\beta>\betac\), see below). We thus again have \(\lambdac=\betac(q,d)\).

One easily checks that the Ising model (with \(h=0\)) at inverse temperature \(2\beta\) corresponds to the \(2\)-state Potts model at inverse temperature \(\beta\).

Intimately related to the Potts model is the FK percolation model.
The latter is a measure on edge sub-graphs of \((\bbZ^d, E_d)\), where \(E_d=\bigl\{\{i,j\}\subset\bbZ^d\bigr\}\), depending on two parameters \(\beta\in\bbR_{\geq 0}\) and \(q\in\bbR_{>0}\), obtained as the weak limit of the finite-volume measures
\begin{equation}
	\Phi^{\FK}_{\Lambda_N;\beta,q}(\omega) = \frac{1}{Z^{\FK}_{\Lambda_N;\beta,q}} \prod_{\{i,j\}\in\omega}(e^{\beta J_{ij}}-1) q^{\kappa(\omega)},
\end{equation}
where \(\kappa(\omega)\) is the number of connected components in the graph with vertex set \(\Lambda_N\) and edge set \(\omega\) and \(Z^{\FK}_{\Lambda_N;\beta,q}\) is the partition function.
In this paper, we always assume that \(q\geq 1\). We use the superscript \(\Bern\) for the case \(q=1\) (Bernoulli percolation).
When \(q\in\bbN\) with \(q\geq 2\), one has the correspondence
\begin{equation}
	\label{eq:Potts_FK_Corresp}
	\mu^{\Potts}_{\beta,q}(\IF{\sigma_x=\sigma_y})- \frac{1}{q} = \frac{q-1}{q} \, \Phi^{\FK}_{\beta,q}(x\leftrightarrow y).
\end{equation}
For the FK percolation model, we consider
\begin{equation}
	G^{\FK}_{\beta}(x,y) = \Phi^{\FK}_{\beta,q}(x\leftrightarrow y)
	\quad\text{ and }\quad
	\lambda = \beta,
\end{equation}
where \(\{x\leftrightarrow y\}\) is the event that \(x\) and \(y\) belong to the same connected component.
As for the Potts model, \(\lambdac=\betac(q,d)\); here, this corresponds to the value at which the percolation transition occurs.

\subsubsection{XY model}

The XY model at inverse temperature \(\beta\geq 0\) on \(\bbZ^d\) is the probability measure on \((\bbS^{1})^{\bbZ^d}\) given by the weak limit of the finite-volume measures (for \(\theta\in[0,2\pi)^{\Lambda_N}\))
\[
	\dd\mu^{\XY}_{\Lambda_N;\beta}(\theta) = \frac{1}{Z_{\Lambda_N;\beta}^{\XY}} e^{-\beta\Ham_N(\theta)}\,\dd\theta
\]
with Hamiltonian
\[
	\Ham_N(\theta) = -\sum_{\{i,j\}\subset\Lambda_N } J_{ij}\cos(\theta_i-\theta_j)
\]
and partition function \(Z_{\Lambda_N;\beta}^{\XY}\).

In this case, we consider
\begin{equation}
	G_{\beta}^{\XY}(x,y) = \mu^{\XY}_{\beta}\big(\cos(\theta_x-\theta_y)\big)
	\quad\text{ and }\quad
	\lambda = \beta.
\end{equation}
In dimension \(1\) and \(2\), \(\lambdac\) is the point at which quasi-long-range order occurs (failure of exponential decay; in particular, \(\lambdac =\infty\) when \(d=1\)).
In dimension \(d\geq 3\), we set \(\lambdac=\betac^{\XY}(d)\) the inverse temperature above which long-range order occurs (spontaneous symmetry breaking).

%
\subsection{Inverse correlation length}
%

To each model introduced in the previous subsection, we have associated a suitable 2-point function \(G_\lambda\) depending on a parameter \(\lambda\) (for instance, \(\lambda=(1+m^2)^{-1}\) for the GFF and \(\lambda=\beta\) for the Potts model).
Each of these 2-point functions gives rise to an \emph{inverse correlation length} associated to a direction \(s\in\bbS^{d-1}\) via
\begin{equation*}
	\nu_s(\lambda) = -\lim_{n\to\infty} \frac{1}{n} \log G_{\lambda}(0,ns) .
\end{equation*}
This limit can be shown to exist in all the models considered above in the regime \(\lambda\in[0,\lambdac)\).
When highlighting the model under consideration, we shall write, for example, \(\nu_s^{\Ising}(\lambda)\).

We also define \(\lambdaqlr\) as
\begin{equation}
\label{eq:lambdaqlr_def}
\lambdaqlr = \min\bigl(\lambdac,\inf\setof{\lambda\geq 0}{\inf_s \nu_s(\lambda)=0}\bigr).
\end{equation}
(Let us note that the infimum over \(s\) is actually not required in this definition, as follows from Lemma~\ref{lem:rate_equiv_directions} below.) It marks the boundary of the regime in which \(\nu\) is non-trivial. It is often convenient to extend the function \(s\mapsto\nu_s(\lambda)\) to a function on \(\bbR^d\) by positive homogeneity. In all the models we consider, the resulting function is convex and defines a norm on \(\bbR^d\) whenever \(\lambda<\lambda_{\exp}\).
These and further basic properties of the inverse correlation length are discussed in Section~\ref{sec:BasicPropICL}.

\smallskip
The dependence of \(\nu_s(\lambda)\) in the parameter \(\lambda\) is the central topic of this paper.

%
\subsection{Mass gap, a comment on the Ornstein--Zernike theory}\label{sec:RemOZ}
%

For off-critical models, the Ornstein--Zernike (OZ) equation is an identity satisfied by \(G_{\lambda}\), first postulated by Ornstein and Zernike (initially, for high-temperature gases):
\begin{equation}
	\label{eq:OZ}
	G_{\lambda}(0,x) = D_{\lambda}(0,x) + \sum_{y} G_{\lambda}(y,x)D_{\lambda}(0,y) ,
\end{equation}
where \(D_{\lambda}\) is the direct correlation function (this equation can be seen as \emph{defining} \(D_{\lambda}\)), which is supposed to behave like the interaction: \(D_{\lambda}(x,y) \simeq J_{xy}\).
On the basis of~\eqref{eq:OZ}, Ornstein and Zernike were able to predict the sharp asymptotic behavior of \(G_{\lambda}\), provided that the following \emph{mass gap hypothesis} holds:
there exists \(c=c(\lambda)>0\) such that
\begin{equation*}
	D_{\lambda}(0,x) \leq e^{-c|x|} G_{\lambda}(0,x).
\end{equation*}
This hypothesis is supposed to hold in a vast class of high-temperature systems with finite correlation length.
One of the goals of the present work is to show that this hypothesis is doomed to \emph{fail} in certain simple models of this type at very high temperature and to provide some necessary conditions for the presence of the mass gap.

To be more explicit, in all models considered, we have an inequality of the form \(G_{\lambda}(0,x)\geq C J_{0x} = C\psi(x)e^{-|x|}\).
In particular, this implies that \(\nu_s \leq |s|\) for all \(s\in\bbS^{d-1}\).
We will study conditions on \(\psi\) and \(\lambda\) under which the inequality is either strict (``mass gap'') or an equality (saturation).
We will also be concerned with the asymptotic behavior of \(G_{\lambda}\) in the latter case, while the ``mass gap'' pendant of the question will only be discussed for the simplest case of \(\KRW\), the treatment of more general systems being postponed to a forthcoming paper.

A useful consequence of the OZ-equation~\eqref{eq:OZ}, which is at the heart of the derivation of the OZ prefactor, is the following (formal) identity
\begin{equation*}
	\label{eq:OZ_paths}
	G_{\lambda}(0,x) = \sum_{\gamma\in\walk(0,x)}\prod_{i=1}^{|\gamma|} D_{\lambda}(\gamma_{i-1},\gamma_i).
\end{equation*}One can see Simon-Lieb type inequalities
\begin{equation*}
	G_{\lambda}(0,x) \leq D_{\lambda}(0,x) + \sum_{y} G_{\lambda}(y,x)D_{\lambda}(0,y),
\end{equation*}as approaching the OZ equation. In particular, this inequality with \(D_{\lambda}(0,y)\simeq J_{xy}\) is directly related to our assumption~\ref{hyp:weak_SL} below.

%
\subsection{A link with condensation phenomena}
%

Recall that the (probabilistic version of) condensation phenomena can be summarized as follows:
take a family of real random variables \(X_1, \ldots, X_N\) (with \(N\) possibly random) and constrain their sum to take a value much larger than \(E[\sum_{k=1}^N X_i]\).
Condensation occurs if most of the deviation is realized by a single one of the \(X_k\)s.
In the case of condensation, large deviation properties of the sum are ``equivalent'' to those of the maximum (see, for instance, \cite{Godreche-2019} and references therein for additional information).

In our case, one can see the failure of the mass gap condition as a condensation transition: suppose the OZ equation holds.
\(G(0,x)\) is then represented as a sum over paths of some path weights.
The exponential cost of a path going from \(0\) to \(x\) is always at least of the order \(|x|\).
Once restricted to paths with exponential contribution of this order, the geometry of typical paths will be governed by a competition between entropy (combinatorics) and the sub-exponential part \(\psi\) of the steps weight.
In the mass gap regime, typical paths are constituted of a number of microscopic steps growing linearly with \(\|x\|\): in this situation, entropy wins over energy and the global exponential cost per unit length is decreased from \(|s|\) to some \(\nu_s<|s|\).
One recovers then the behavior of \(G\) predicted by Ornstein and Zernike.
In contrast, in the saturated regime, typical paths will have one giant step (a condensation phenomenon) and the behavior of \(G\) is governed by this kind of paths, which leads to \(G(0,x)\simeq D(0,x)\simeq J_{0x}\).

%
\subsection{Assumptions}
%

To avoid repeating the same argument multiple times, we shall make some assumptions on \(G_\lambda\) and prove the desired results based on those assumptions only (basically, we will prove the relevant claims for either \(\KRW\) or \(\SAW\) and the assumptions allow a comparison with those models).
Proofs (or reference to proofs) that the required properties hold for the different models we consider are collected in Appendix~\ref{app:Properties}.

\medskip
\begin{enumerate}[label={\ensuremath{\mathrm{[A_\arabic*]}}}, start=0]
	\item \label{hyp:G_Bounded_Pos_nu_pos}
		For any \(\lambda\in [0, \lambdac)\), \(G_{\lambda}(x,y)\geq 0\) for any \(x, y\in \bbZ^d\) \text{ and }\({\sup_{x\in\bbZ^d} G_{\lambda}(0,x) < \infty}\).
	\item \label{hyp:sub_mult}
		For any \(\lambda \in [0, \lambdac)\), there exists \(a_{\lambda} > 0\) such that, for any \(x, y, z\in\bbZ^d\),
	\begin{equation}
		\label{eq:G_sub_mult}
		G_{\lambda}(x,y) \geq a_{\lambda} G_{\lambda}(x,z) G_{\lambda}(z,y).
	\end{equation}This property holds at \(\lambdac\) if \(\sup_{x}G_{\lambdac}(0,x)<\infty\).
	\item \label{hyp:left_cont}
		For any \(x, y\in \bbZ^d\), \(\lambda\mapsto G_{\lambda}(x,y)\) is non-decreasing and left-continuous on \([0, \lambdac)\). This continuity extends to \([0,\lambdac]\) if \(G_{\lambdac}(x,y)\) is well defined.
	\item \label{hyp:weak_SL}
		There exists \(\alpha\geq 0\) such that, for any \(0\leq \lambda < \lambdac\), there exists \(C\geq 0\) such that for any \(x, y\in\bbZ^d\),
		\begin{equation}
			\label{eq:weak_SL}
			G_{\lambda}(x,y) \leq C G_{\alpha\lambda}^{\KRW}(x,y).
		\end{equation}
	\item \label{hyp:J_path_lower_bnd}
		For any \(\lambda \in [0, \lambdac)\), there exist \(c_\lambda>0\) and \(C_{\lambda}>0\) such that, for any collection \(\Gamma\subset\saw(x,y)\), one has
		\begin{equation}\label{eq:J_path_lower_bnd}
			G_{\lambda}(x,y) \geq c_\lambda \sum_{\gamma\in\Gamma}(C_{\lambda})^{\abs{\gamma}}\prod_{k=1}^{\abs{\gamma}} 	J_{\gamma_{k-1} \gamma_k}.
		\end{equation}
\end{enumerate}

\medskip
Our choice of \(\lambdac\)  and of \(G_{\lambda}\) ensures that~\ref{hyp:G_Bounded_Pos_nu_pos} is always satisfied.
Assumption~\ref{hyp:sub_mult} holds as soon as the model enjoys some GKS or FKG type inequalities. Assumption~\ref{hyp:left_cont} is often a consequence of the monotonicity of the Gibbs state with respect to \(\lambda\).
The existence of a well-defined high-temperature regime (or rather the \emph{proof} of its existence) depends on this monotonicity.
Assumption~\ref{hyp:weak_SL} is directly related to the Ornstein--Zernike equation~\eqref{eq:OZ} in the form given in~\eqref{eq:OZ_paths}. It is easily deduced from a weak form of Simon--Lieb type inequality, see Section~\ref{sec:RemOZ}.
Assumption~\ref{hyp:J_path_lower_bnd} may seem to be a strong requirement but is usually a consequence of a path representation of correlation functions, some form of which is available for vast classes of systems.

\bigskip
Part of our results will also require the following additional regularity assumption on the prefactor \(\psi\):
\begin{enumerate}[label={\ensuremath{\mathrm{[H_\arabic*]}}}, start=0]
	\item \label{hyp:PsiQuasiIsotropic}
	There exist \(C_\psi^+, C_\psi^- > 0\) and \(\psi_0:\bbN_{>0}\to\bbR\) such that, for all \(y\in\bbZ^d\setminus\{0\}\),
	\[
		C^-_\psi \psi_0(\normI{y}) \leq \psi(y) \leq C^+_\psi \psi_0(\normI{y}).
	\]
\end{enumerate}

%
\subsection{Surcharge function}
%

Our study has two ``parameters'': the prefactor \(\psi\), and the norm \(|\cdot|\).
It will be convenient to introduce a few quantities associated to the latter.

First, two convex sets are important: the unit ball \(\UnitBall\subset \bbR^d\) associated to the norm \(|\cdot|\) and the corresponding \emph{Wulff shape}
\[
	\Wulff = \setof{t\in\bbR^d}{\forall x\in\bbR^d,\, t\cdot x \leq |x|}.
\]
Given a direction \(s\in \bbS^{d-1}\), we say that the vector \(t\in\bbR^d\) is dual to \(s\) if
\(t\in\partial\Wulff\) and \(t\cdot s = |s|\). A direction \(s\) possesses a unique dual vector \(t\) if and only if \(\Wulff\) does not possess a facet with normal \(s\). Equivalently, there is a unique dual vector when the unit ball \(\UnitBall\) has a unique supporting hyperplane at \(s/|s|\). (See Fig.~\ref{fig:duality} for an illustration.)

\begin{figure}[ht]
	\includegraphics{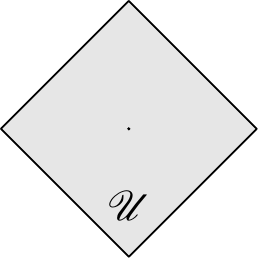}
	\hspace*{1cm}
	\includegraphics{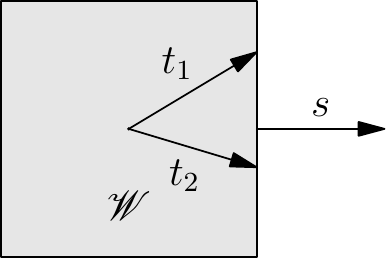}
	\hspace*{1cm}
	\includegraphics{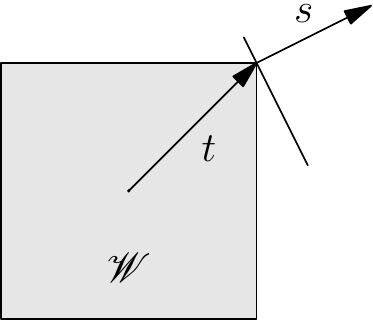}
	\caption{Left: The unit ball for the norm \(|\cdot|=\normI{\cdot}\). Middle: the corresponding Wulff shape \(\Wulff\) with two vectors \(t_1\) and \(t_2\) dual to \(s=(1,0)\). Right: the set \(\Wulff\) with the unique vector \(t\) dual to \(s=\frac{1}{\sqrt{5}}(2,1)\).}
	\label{fig:duality}
\end{figure}

The \emph{surcharge function} associated to a dual vector \(t\in\partial\Wulff\) is then defined by
\begin{equation*}
	\surcharge_t(x) = |x|- x\cdot t.
\end{equation*}
It immediately follows from the definition that \(\surcharge_t(x)\geq 0\) for all \(x\in\bbZ^d\) and \(\surcharge_t(s)=0\) if \(t\) is a vector dual to \(s\).

The surcharge function plays a major role in the Ornstein--Zernike theory as developed in~\cite{Campanino+Ioffe-2002,Campanino+Ioffe+Velenik-2003,Campanino+Ioffe+Velenik-2008}.
Informally, \(\surcharge_t(s')\) measures the additional cost (per unit length) that a step in direction \(s'\) incurs when your goal is to move in direction \(s\).
As far as we know, it first appeared, albeit in a somewhat different form, in~\cite{Alexander-1990}.

%
\subsection{Quasi-isotropy}\label{sec:QuasiIsotropy}
%

Some of our results hinge on a further regularity property of the norm \(|\cdot|\).

Let \(s\in\bbS^{d-1}\) and \(t\) be a dual vector.
Write \(s_0 = s/|s|\in\partial\UnitBall\) and \(\hat{t} = t/\norm{t} \in \bbS^{d-1}\). Let \(T_{s_0}\UnitBall\) be the tangent hyperplane to \(\UnitBall\) at \(s_0\) with normal \(\hat{t}\) (seen, as usual, as a vector space). It is always possible to choose the dual vector \(t\) such that the following holds (we shall call such a \(t\) \emph{admissible}\footnote{When there are multiple tangent hyperplanes to \(\partial\UnitBall\) at \(s_0\), convexity and symmetry imply that all non-extremal elements of the normal cone are admissible.}).
There exist \(\epsilon > 0\) and a neighborhood \(\scrN\) of \(s_0\) such that \(\partial\UnitBall\cap\scrN\) can be parametrized as (see Fig.~\ref{fig:paramU})
\[
\partial\UnitBall \cap \scrN = \setof{s_0 + \tau v - f(\tau v)\hat{t}}{v\in T_{s_0}\UnitBall\cap\bbS^{d-1},\, |\tau|<\epsilon},
\]
for some convex nonnegative function \(f:T_{s_0} \to \bbR\) satisfying \(f(0)=0\).
\begin{figure}
	\centering
	\includegraphics{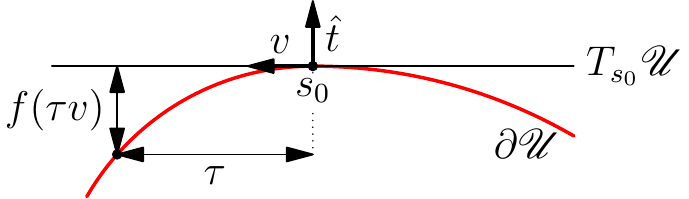}
	\caption{The local parametrization of \(\partial\UnitBall\) in a neighborhood of \(s_0\).}
	\label{fig:paramU}
\end{figure}

\medskip
We will say that \(\partial\UnitBall\) is \emph{quasi-isotropic} in direction \(s\) if the qualitative behavior of \(f\) is the same in all directions \(v\): there exist \(c_+\geq c_- > 0\) and an non-decreasing non-negative convex function \(g\) such that, for all \(v\in T_{s_0}\UnitBall\cap\bbS^{d-1}\) and all \(\tau\in (0, \epsilon)\),
\begin{equation}\label{eq:QuasiIsotropy}
	c_+ g(\tau)\geq f(\tau v) \geq c_- g(\tau) .
\end{equation}
Taking \(\scrN\) and \(\epsilon\) smaller if necessary, we can further assume that either \(g(\tau)>0\) for all \(\tau\in (0, \epsilon)\), or \(g(\tau)\equiv 0\) on \((0, \epsilon)\) (the latter occurs when \(s_0\) is in the ``interior'' of a facet of \(\partial\UnitBall\)).

\medskip
A sufficient, but by no means necessary, condition ensuring that quasi-isotropy is satisfied in all directions \(s\) is that the unit ball \(\UnitBall\) has a \(C^2\) boundary with everywhere positive curvature. Other examples include, for instance, all \(\ell^p\)-norms, \(1\leq p\leq\infty\). 

%
\subsection{Main results: discussion}
%

We first informally discuss our results. Precise statements can be found in Theorem~\ref{thm:main} below.

It immediately follows from~\ref{hyp:J_path_lower_bnd} that
\begin{equation}\label{eq:TrivialUpperBoundOnICL}
	\nu_s(\lambda) \leq |s| .
\end{equation}
We say that there is \emph{saturation} at \(\lambda\) in the direction \(s\) if \(\nu_s(\lambda) = |s|\).

The function \(\lambda\mapsto \nu_s(\lambda)\) is non-increasing (see~\eqref{eq:nu_monotonicity}) and \(\lim_{\lambda\searrow 0} \nu_s(\lambda) = |s|\) (see Lemma~\ref{lem:lambda_equal_zero}).
We can thus define
\[
\lambda_{\sat}(s) = \sup\setof{\lambda}{\nu_s(\lambda) = |s|}.
\]
In several cases, we will be able to prove that \(\lambda_{\sat}(s)<\lambdaqlr\).
The main question we address in the present work is whether \(\lambda_{\sat}(s) > 0\).
Note that, when \(\lambda_{\sat} \in (0,\lambdaqlr)\), the function \(\lambda\mapsto\nu_s(\lambda)\) is not analytic in \(\lambda\).

Our main result can then be stated as follows: provided that suitable subsets of \ref{hyp:G_Bounded_Pos_nu_pos}--\ref{hyp:J_path_lower_bnd} and~\ref{hyp:PsiQuasiIsotropic} hold and \(\partial\UnitBall\) is quasi-isotropic in direction \(s\in\bbS^{d-1}\),
\[
	\lambda_{\sat}(s) > 0 \quad\Leftrightarrow\quad \sum_{y\in\bbZ^d} \psi(y)e^{-\surcharge_t(y)} < \infty ,
\]
where \(t\) is an arbitrary vector dual to \(s\).

\smallskip
What happens when quasi-isotropy fails in direction \(s\) is still mostly open; a discussion can be found in Section~\ref{sec:FailureQuasiIsotropy}.

\begin{remark}
	In a sense, exponentially decaying interactions are ``critical'' regarding the presence of a mass gap regime/condensation phenomenon.
	Indeed, on the one hand, any interaction decaying slower than exponential will lead to absence of exponential decay (e.g., \(G_\lambda(0,x)\geq C_{\lambda} J_{0x}\) by~\ref{hyp:J_path_lower_bnd} in all the models considered here).
	This is a ``trivial'' failure of mass gap, as the model is not massive.
	Moreover, the behavior \(G_\lambda(0,x)\asymp J_{0x}\) at any values of \(\lambda\) was proven in some cases: see \cite{Newman+Spohn-1998} for results on the Ising model and \cite{Aoun-2020} for the Potts model.
	On the other hand, interactions decaying faster (that is, such that \(\sup_{x\in\bbZ^d} J_{0x}e^{C\norm{x}} < \infty\) for all \(C>0\)) always lead to the presence of a mass gap (finite-range type behavior).
	Changing the prefactor to exponential decay is thus akin to exploring the ``near-critical'' regime.
\end{remark}

%
\subsection{Main Theorems}\label{sec:MainTheorems}
%

We gather here the results that are proved in the remainder of the paper.
Given a norm \(|\cdot|\) and \(s\in\bbS^{d-1}\), fix a vector \(t\) dual to \(s\) and define
\begin{equation}
	\tilde{\Xi}(|\cdot|, \psi, t) = \sum_{x\in\bbZ^d\setminus \{0\}} \psi(x) e^{-\surcharge_t(x)}.
\end{equation}
Our first result provides criteria to determine whether \(\lambda_{\sat}>0\).
\begin{theorem}
	\label{thm:main}
	Suppose~\ref{hyp:G_Bounded_Pos_nu_pos},~\ref{hyp:sub_mult},~\ref{hyp:left_cont},~\ref{hyp:weak_SL},~\ref{hyp:J_path_lower_bnd} are satisfied. Let \(s\in\bbS^{d-1}\). Then,
	\begin{itemize}
		\item If there exists \(t\) dual to \(s\) with \(\tilde{\Xi}(|\cdot|, \psi, t)<\infty\), there exists \(0<\lambda_0\leq\lambdaqlr\) such that \(\nu_{s}(\lambda)=|s|\) for any \(\lambda<\lambda_0\).
		\item Assume~\ref{hyp:PsiQuasiIsotropic}. If there exists an admissible \(t\) dual to \(s\) such that \(\partial\UnitBall\) is quasi-isotropic in direction \(s\) and \(\tilde{\Xi}(|\cdot|, \psi, t)=\infty\), then \(\nu_{s}(\lambda)<|s|\) for any \(\lambda\in(0, \lambdaqlr)\).
	\end{itemize}
	In particular, when \(\tilde{\Xi}(|\cdot|, \psi, t)<\infty\) for some \(t\) dual to \(s\), there exists \(\lambda_{\sat}\in (0,\lambdaqlr]\) such that \(\nu_{s}(\lambda) =|s|\) when \(\lambda<\lambda_{\sat}\) and \(\nu_{s}(\lambda) <|s|\) when \(\lambda>\lambda_{\sat}\).
\end{theorem}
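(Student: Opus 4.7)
Applying~\ref{hyp:J_path_lower_bnd} to the single-step ``walk'' $(0,[ns])$ gives $G_\lambda(0,[ns]) \geq c_\lambda C_\lambda \psi([ns]) e^{-|[ns]|}$, so the subexponentiality~\eqref{eq:psi_subexp} yields the trivial upper bound $\nu_s(\lambda) \leq |s|$. Since~\ref{hyp:left_cont} makes $\lambda \mapsto G_\lambda$ non-decreasing, $\lambda\mapsto\nu_s(\lambda)$ is non-increasing and $\{\lambda : \nu_s(\lambda)=|s|\}$ is an interval starting at~$0$. The combined ``In particular'' statement then follows from the two bullets together with the observation that $\nu_s(\lambda)=0<|s|$ for $\lambda>\lambdaqlr$, which forces $\lambda_{\sat}\leq \lambdaqlr$.

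\textbf{First bullet ($\tilde\Xi<\infty$).} Reduce to the $\KRW$ via~\ref{hyp:weak_SL}; for the $\KRW$ the path representation is an equality. The key algebraic identity, valid for every walk $(0,x_1,\ldots,x_k=[ns])$ with increments $y_i := x_i - x_{i-1}$, is
\begin{equation*}
\sum_{i=1}^{k}|y_i| = t\cdot [ns] + \sum_{i=1}^{k}\surcharge_t(y_i),
\end{equation*}
which turns the path sum into
\begin{equation*}
G^{\KRW}_\lambda(0,[ns]) = e^{-t\cdot [ns]} \sum_{k\geq 1} \lambda^k \!\!\sum_{\gamma:\,0\to [ns],\,|\gamma|=k} \prod_{i=1}^{k} \psi(y_i) e^{-\surcharge_t(y_i)}.
\end{equation*}
Dropping the endpoint constraint decouples the steps and yields
\begin{equation*}
G^{\KRW}_\lambda(0,[ns]) \leq e^{-t\cdot [ns]} \sum_{k\geq 1}(\lambda\tilde\Xi)^k ,
\end{equation*}
which converges whenever $\lambda < 1/\tilde\Xi$. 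Since $t\cdot [ns]=n|s|+O(1)$, this gives $\nu^{\KRW}_s(\lambda) \geq |s|$, and~\ref{hyp:weak_SL} transfers this to $\nu_s(\lambda) = |s|$ for every $\lambda < \lambda_0 := \min(\lambdaqlr,\, 1/(\alpha\tilde\Xi))$.

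\textbf{Second bullet ($\tilde\Xi=\infty$, admissible $t$, quasi-isotropy).} Fix $\lambda \in (0,\lambdaqlr)$. Invoking~\ref{hyp:J_path_lower_bnd} together with the surcharge identity above, the claim $\nu_s(\lambda)<|s|$ reduces to exhibiting $\Gamma_n \subset \saw(0,[ns])$ and $\delta=\delta(\lambda)>0$ with
\begin{equation*}
\sum_{\gamma\in \Gamma_n} C_\lambda^{|\gamma|} \prod_{i=1}^{|\gamma|} \psi(y_i) e^{-\surcharge_t(y_i)} \geq e^{\delta n}.
\end{equation*}
Quasi-isotropy provides, for every small $\epsilon>0$, a cone $\mathscr{K}_\epsilon$ around $s$ on which $\surcharge_t$ is controlled by the tangential profile $f$ in~\eqref{eq:QuasiIsotropy}; together with~\ref{hyp:PsiQuasiIsotropic}, this lets us show
\begin{equation*}
\Xi_R := \sum_{y\in\mathscr{K}_\epsilon,\,|y|\leq R} \psi(y) e^{-\surcharge_t(y)} \longrightarrow \infty \quad\text{as } R\to\infty.
\end{equation*}
Choose $R$ so that $C_\lambda \Xi_R > 1$ and take $\Gamma_n$ to consist of SAWs of length $k\asymp n$ whose steps lie in $\mathscr{K}_\epsilon$ with norm $\leq R$. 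A local CLT applied to the tilted step distribution $\pi_R(y) \propto \psi(y) e^{-\surcharge_t(y)} \IF{y\in\mathscr{K}_\epsilon,\,|y|\leq R}$ (itself exponentially tilted so that its mean is parallel to $s$) then shows that a fraction at least $\mathrm{poly}(n)^{-1}$ of such walks actually reach $[ns]$, producing the required gain $(C_\lambda\Xi_R)^{k}/\mathrm{poly}(n)$.

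\textbf{Main obstacle.} The delicate step is the second bullet: one must (i) ensure that the divergence of $\tilde\Xi$ survives restriction to a cone of controlled surcharge---this is exactly where quasi-isotropy is indispensable, the regular tangential behaviour of $f$ preventing the mass of $\psi\cdot e^{-\surcharge_t}$ from being carried entirely by directions excluded by the cone---and (ii) implement the local CLT for the tilted distribution with uniform enough control that the polynomial prefactor does not swallow the exponential gain. The self-avoidance constraint in $\Gamma_n$ will additionally require a small pruning argument, but the combinatorial loss is negligible compared to the exponential gain. The remaining ingredients---reduction to $\KRW$, surcharge identity, monotonicity---are routine once this framework is in place.
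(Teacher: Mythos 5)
Your first bullet is essentially the paper's Lemmas~\ref{lem:SaturationKRW} and~\ref{lem:SaturationAtSmallLambda}: reduce to \(\KRW\) via~\ref{hyp:weak_SL}, use the surcharge decomposition \(\sum_i |y_i| = t\cdot[ns] + \sum_i\surcharge_t(y_i)\), drop the endpoint constraint, and sum the geometric series. That part is fine.

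For the second bullet your route genuinely departs from the paper's, and the departure is where the gap lies. You propose restricting steps to a cone \(\mathscr{K}_\epsilon\) of bounded norm, taking walks of length \(k\asymp n\), and using a local CLT to argue that a \(\mathrm{poly}(n)^{-1}\) fraction of them hit \([ns]\). This requires the mean of the step distribution \(\pi_R\) to be parallel to \(s\); since \(\pi_R\) is the restriction of \(\psi\, e^{-\surcharge_t}\) to a cone around \(s\) (not a symmetric object), this is generically false, and you acknowledge this by invoking an exponential tilt. But the tilt is not free: it replaces \(\Xi_R\) by \(\Xi_R^\theta = \sum_{y} \psi(y)e^{-\surcharge_t(y)+\theta\cdot y}\) and introduces a factor \(e^{-\theta\cdot[ns]}\), and you never verify that \((C_\lambda\Xi_R^\theta)^k e^{-\theta\cdot[ns]}\) is still exponentially large in \(n\). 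This is exactly the computation that would constitute the heart of the proof, and it is left unaddressed.

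The paper avoids the issue entirely with a more elementary construction (Lemma~\ref{lem:mass_gap_non_summable_surcharge}). Rather than forcing a length-\(k\) walk with bounded increments to \emph{hit} \([ns]\), it constructs paths \((0,y_1,\dots,y_1+\cdots+y_M, ns)\) with \(M = n/(2R)\) intermediate steps of norm \(\le R\) lying in the truncated cone and staying inside a fixed tube \(\calT_R(s)\), plus \emph{one final giant step} \(y_{M+1} = ns - \sum_{k\le M} y_k\) of length \(\ge n/2\) that absorbs the remaining displacement. Because the intermediate steps are unconstrained at their far ends, no local CLT, no tilting, and no drift alignment are needed: one simply applies the uniform divergence estimate (Lemma~\ref{lem:intermediaire}, proved via Lemma~\ref{lem:ExplicitCond} using quasi-isotropy and~\ref{hyp:PsiQuasiIsotropic}) \(M\) times. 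Also note that self-avoidance is automatic — every step in \(\cone_{t,\delta}\) satisfies \(t\cdot y_k\ge(1-\delta)|y_k|>0\), so the partial sums are strictly increasing in the \(t\)-direction — so the ``small pruning argument'' you defer to is not actually needed, either in the paper's construction or in yours. Finally, your assertion that quasi-isotropy implies \(\Xi_R\to\infty\) in the cone is precisely the content of Lemmas~\ref{lem:intermediaire} and~\ref{lem:ExplicitCond}; it is a real lemma with a real proof, not a one-line observation, and it is where~\ref{hyp:PsiQuasiIsotropic} and admissibility of \(t\) are actually used.
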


\begin{corollary}
	\label{cor:main}
	The claim in Theorem~\ref{thm:main} applies to all the models considered in this paper (that is, \(\KRW,\SAW, \Ising, \IsingPosFie, \FK, \Potts, \GFF, \XY\)).
\end{corollary}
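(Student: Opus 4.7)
The strategy is purely a verification task: Theorem~\ref{thm:main} is already stated for any abstract 2-point function satisfying \ref{hyp:G_Bounded_Pos_nu_pos}--\ref{hyp:J_path_lower_bnd}, so the corollary reduces to checking, model by model, that our choice of \(G_\lambda\) in each of the eight cases does satisfy these five hypotheses on the interval \([0,\lambdac)\). Assumption~\ref{hyp:G_Bounded_Pos_nu_pos} is built into the very definition of \(\lambdac\) in each case, together with positivity of the relevant 2-point quantity (spin--spin correlations, truncated correlations in a positive field, connection probabilities, etc.), so I would dispatch it first and uniformly.

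For the random walk-type models (\(\KRW\), \(\SAW\), \(\GFF\)) the verification is essentially mechanical. For \(\KRW\), \ref{hyp:sub_mult} and \ref{hyp:J_path_lower_bnd} follow from path concatenation at \(z\) and from restricting the sum to walks whose vertices lie in the given collection~\(\Gamma\); \ref{hyp:left_cont} is monotone convergence in \(\lambda\); \ref{hyp:weak_SL} is trivial with \(\alpha=1\) and \(C=1\). For \(\SAW\), concatenation is not automatic, but splitting a SAW at its first visit to \(z\) gives \ref{hyp:sub_mult}, and \(G^{\SAW}\le G^{\KRW}\) yields \ref{hyp:weak_SL}; the remaining items are as for \(\KRW\). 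The \(\GFF\) case is immediate from the identity \(G^{\GFF}_\lambda=\lambda G^{\KRW}_\lambda\) of~\eqref{eq:GFF_to_KRW}.

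For the spin and percolation models I would invoke standard correlation inequalities in a systematic way. For the \(\Ising\) model at \(h=0\): \ref{hyp:sub_mult} is Griffiths's second inequality (GKS II), \ref{hyp:left_cont} is the \(\beta\)-monotonicity of \(\mu^{\Ising}_{\beta,0}(\sigma_x\sigma_y)\), \ref{hyp:weak_SL} follows from the Simon--Lieb inequality (which is an inequality of exactly the announced form once Simon's summation is weakened), and \ref{hyp:J_path_lower_bnd} is obtained from the random current or high-temperature expansion, giving a lower bound in terms of weighted SAWs. The Potts/FK case is treated in parallel using the FKG inequality instead of GKS, together with the well-known Simon--Lieb inequality for FK percolation and the path expansion of the connection probability; the correspondence~\eqref{eq:Potts_FK_Corresp} then transfers everything to the Potts model. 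For the \(\IsingPosFie\) truncated function, the GHS-type inequalities and their Simon--Lieb analogue in positive field play the role of the \(h=0\) counterparts. Finally, for the \(\XY\) model one uses Ginibre's inequalities (as the substitute for GKS) together with the Aizenman--type random walk representation to obtain \ref{hyp:sub_mult} and \ref{hyp:J_path_lower_bnd}, and a Simon--Lieb-type inequality adapted to the \(\XY\) setting to get~\ref{hyp:weak_SL}.

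The only genuinely delicate hypothesis is \ref{hyp:weak_SL}: for \(\KRW\), \(\SAW\), \(\GFF\) it is trivial, but for the true spin/percolation systems it encodes a Simon--Lieb-type inequality, and one must check that the constant \(C\) and multiplicative factor \(\alpha\) can be chosen uniformly in \(x,y\) throughout the regime \([0,\lambdac)\). This is where I would rely on the literature, referring to the compilation in Appendix~\ref{app:Properties} for each model; once that uniform estimate is in hand, Theorem~\ref{thm:main} applies verbatim and Corollary~\ref{cor:main} follows.
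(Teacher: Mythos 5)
Your proposal follows the same strategy as the paper: Corollary~\ref{cor:main} is not proved by a separate argument but by verifying, model by model, that each of \(\KRW,\SAW,\Ising,\IsingPosFie,\FK,\Potts,\GFF,\XY\) satisfies \ref{hyp:G_Bounded_Pos_nu_pos}--\ref{hyp:J_path_lower_bnd}, the verification being exactly what Appendix~\ref{app:Properties} carries out. A few of the specific tools you invoke differ from those in the appendix (for instance, the paper obtains \ref{hyp:weak_SL} and \ref{hyp:J_path_lower_bnd} for FK/Potts/Ising via a finite-energy comparison rather than a Simon--Lieb iteration, gets \(\IsingPosFie\) from a random-current argument rather than GHS, and establishes \ref{hyp:J_path_lower_bnd} for the XY model through an explicit Taylor/current expansion rather than an Aizenman-type walk), but these are alternative routes to the same checklist, so the overall approach matches.
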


\begin{remark}\label{rem:DirectionDepSaturation}
	Whether \(\lambda_{\sat}(s)>0\) depends in general on the direction \(s\).
	To see this, consider the case \(|\cdot|= \normIV{\cdot}\) on \(\bbZ^2\) with \(\psi(x) = \norm{x}^{-\alpha}\) with \(7/4 \geq \alpha > 3/2\).
	
	In order to determine whether \(\lambda_{\sat}(s)>0\), it will be convenient to use the more explicit criterion derived in Lemma~\ref{lem:ExplicitCond}.
	The latter relies on the local parametrization of \(\partial\UnitBall\), as described in Section~\ref{sec:QuasiIsotropy}.
	Below, we use the notation introduced in the latter section.
	In particular, \(\lambda_{\sat}(s)>0\) if and only if
	\[
		\sum_{\ell\geq 1} \psi_0(\ell) (\ell g^{-1}(1/\ell))^{d-1} < \infty ,
	\]
	where we can take \(\psi_0(\ell) = \ell^{-\alpha}\) (remember condition~\ref{hyp:PsiQuasiIsotropic}).
	
	On the one hand, let us first consider the direction \(s=(0,1)\). 
	The corresponding dual vector is \(t=s\).
	In this case, one finds that \(f(\tau) = \frac14\tau^4 + \sfO(\tau^8)\).
	We can thus take \(g(\tau) = \tau^4\).
	In particular,
	\[
		\sum_{\ell\geq 1} \psi_0(\ell) (\ell g^{-1}(1/\ell))^{d-1}
		= \sum_{\ell\geq 1} \ell^{3/4-\alpha}
		= \infty ,
	\]
	so that \(\lambda_{\sat}(s)=0\).
	
	On the other hand, let us consider the direction \(s'=2^{-1/2}(1,1)\).
	The dual vector is \(t'=2^{-3/4}(1,1)\).
	In this case, one finds that \(f(\tau) = 3\cdot2^{-5/4}\cdot\tau^2 + \sfO(\tau^4)\).
	We can thus take \(g(\tau) = \tau^2\).
	In particular,
	\[
		\sum_{\ell\geq 1} \psi_0(\ell) (\ell g^{-1}(1/\ell))^{d-1}
		= \sum_{\ell\geq 1} \ell^{1/2-\alpha}
		< \infty ,
	\]
	so that \(\lambda_{\sat}(s)>0\).	
\end{remark}

The next theorem lists some cases in which we were able to establish the inequality \(\lambda_{\sat}< \lambdaqlr\).
\begin{theorem}\label{thm:NotSatCloseToLambdaC}
	The inequality \(\lambda_{\sat}^*<\lambda_{\exp}^*\) holds whenever one of the following is true:
	\begin{itemize}
		\item \(d=1\) and \(*\in\{\Ising,\ \FK,\ \Potts,\ \GFF,\ \XY,\ \KRW\} \);
		\item \(d\geq 2\), \(*\in \{\Ising,\Bern\}\) and \(\lambdac^{*}=\lambdaqlr^{*}\);
		\item \(d\geq 3\), \(*\in\{\GFF,\KRW\}\) and \(\lambdac^{*}=\lambdaqlr^{*}\).
	\end{itemize}
\end{theorem}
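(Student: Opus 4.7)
The plan is to show, in every case listed in the statement, that \(\lim_{\lambda\nearrow\lambda_{\exp}^*}\nu_s^*(\lambda)=0\). Since \(\nu_s\) is non-increasing in \(\lambda\) (by the monotonicity in~\ref{hyp:left_cont}) and \(|s|>0\), such a limit forces \(\nu_s(\lambda)<|s|\) for every \(\lambda\) sufficiently close to (but below) \(\lambda_{\exp}^*\), which is exactly \(\lambda_{\sat}^*<\lambda_{\exp}^*\). The unifying ingredient is the implication \(\chi(\lambda)\to\infty\Rightarrow \nu_s(\lambda)\to 0\) for every \(s\), where \(\chi(\lambda):=\sum_{x\in\bbZ^d}G_\lambda(0,x)\) denotes the susceptibility. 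Iterating sub-multiplicativity~\ref{hyp:sub_mult} and applying Fekete's lemma yields the upper bound \(G_\lambda(0,x)\leq a_\lambda^{-1}e^{-\nu(x)}\), where \(\nu\) is the positively homogeneous extension of \(s\mapsto\nu_s(\lambda)\); thus, whenever \(\inf_s\nu_s(\lambda)\geq\varepsilon>0\), the function \(\nu\) is a genuine norm and \(\chi(\lambda)\leq a_\lambda^{-1}\sum_x e^{-\nu(x)}<\infty\). Since \(a_\lambda\) stays bounded away from \(0\) on compact subsets of \([0,\lambda_{\exp}^*)\) in all models under consideration (in fact \(a_\lambda=1\) thanks to an FKG, Griffiths or Ginibre inequality, or via the random-walk representation), divergence of \(\chi\) forces \(\inf_s\nu_s(\lambda)\to 0\), and Lemma~\ref{lem:rate_equiv_directions} then propagates this vanishing to every direction.

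It then suffices to verify \(\chi(\lambda)\to\infty\) as \(\lambda\nearrow\lambda_{\exp}^*\) in each regime. For the KRW, \(\chi^{\KRW}(\lambda)=\sum_{n\geq 0}\lambda^n=1/(1-\lambda)\) diverges at \(\lambda=1=\lambda_c^{\KRW}\) in every dimension, and identity~\eqref{eq:GFF_to_KRW} transfers the divergence to the GFF; under the assumption \(\lambda_c^*=\lambda_{\exp}^*\) this settles both the \(d\geq 3\) block and the KRW/GFF entries of the \(d=1\) block. For \(d\geq 2\) with \(*\in\{\Ising,\Bern\}\) and \(\lambda_c^*=\lambda_{\exp}^*\), sharpness of the phase transition---Aizenman--Barsky--Fern\'andez for Ising, and the Men'shikov / Aizenman--Newman / Duminil-Copin--Tassion results for Bernoulli, valid in our long-range setting for \(J\) satisfying~\eqref{eq:psi_subexp}---provides \(\chi(\lambda)\to\infty\) at \(\lambda_c^*\). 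Finally, for \(d=1\) and \(*\in\{\Ising,\Potts,\FK,\XY\}\) one has \(\lambda_{\exp}^*=+\infty\); for each fixed \(x\) the map \(\beta\mapsto G_\beta^*(0,x)\) is non-decreasing (Griffiths / FKG / Ginibre) and converges to its maximal value as \(\beta\to\infty\) (namely \(1\) for Ising/FK/XY, and \(1-1/q\) for Potts), because the Gibbs measure concentrates on spatially constant configurations, so monotone convergence yields \(\chi(\beta)\to\infty\).

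The most delicate step is the application of sharpness in the long-range setting for \(d\geq 2\): the classical short-range arguments have to be verified in our framework for interactions of the form \(J_x=\psi(x)e^{-|x|}\) with merely sub-exponential \(\psi\), and this ultimately leans on the extensions of ABF and of Duminil-Copin--Tassion available in the literature. A secondary technicality arises for the \(d=1\) XY case, where the continuous symmetry calls for an explicit Ginibre-type or Gaussian tilting argument to justify \(\mu_\beta^{\XY}\bigl(\cos(\theta_0-\theta_x)\bigr)\to 1\) as \(\beta\to\infty\), despite the absence of a symmetry-broken phase. Once these two points are granted, the two-step scheme above delivers \(\lambda_{\sat}^*<\lambda_{\exp}^*\) uniformly across all cases of the theorem.
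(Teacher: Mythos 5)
The overall scheme (show that \(\nu_s(\lambda)\to 0\) as \(\lambda\nearrow\lambdaqlr\), via divergence of the susceptibility combined with the sub-multiplicative bound \(G_\lambda(0,x)\leq a_\lambda^{-1}e^{-\nu(x)}\)) is sound for most entries of the theorem and is essentially the same route the paper takes in Lemmas~\ref{lem:nontrivial_mass_gap_regim_GFF_KRW} and~\ref{lem:nontrivial_mass_gap_regim_AnyD} for the \(d\geq 2\) and \(d\geq 3\) blocks. Your alternative for the one-dimensional Ising/FK/Potts/XY entries (pointwise convergence of \(G_\beta(0,x)\) to its maximal value as \(\beta\to\infty\), hence \(\chi(\beta)\to\infty\), combined with \(a_\lambda=1\) from GKS/FKG/Ginibre) is also valid, although the paper's argument via a finite-energy lower bound \(G_\lambda(0,x)\geq p_\beta^{\normI{x}}\) (Lemma~\ref{lem:nontrivial_mass_gap_regim_d1}) is more direct and sidesteps the concentration statement you flag as a technicality for XY.

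There is, however, a genuine gap in the \(d=1\) KRW/GFF case. Two problems arise. First, your susceptibility argument requires \(a_\lambda\) to remain bounded away from \(0\) as \(\lambda\nearrow\lambdac=1\). But for KRW (and GFF) one has \(a_\lambda=G_\lambda^{\KRW}(0,0)^{-1}\), and in \(d=1\) the associated random walk (symmetric, finite variance) is recurrent, so \(G_\lambda^{\KRW}(0,0)\to\infty\) as \(\lambda\to 1\). Consequently the right-hand side of your bound \(\chi(\lambda)\leq a_\lambda^{-1}\sum_x e^{-\varepsilon\normII{x}}\) also diverges, and no contradiction with \(\chi^{\KRW}(\lambda)=(1-\lambda)^{-1}\to\infty\) is obtained (you would need a rate comparison, e.g.\ via a local CLT showing \(G_\lambda(0,0)\asymp(1-\lambda)^{-1/2}\), which your argument does not make). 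Your parenthetical claim that \(a_\lambda=1\) ``via the random-walk representation'' is not correct for KRW/GFF. Second, you invoke the hypothesis \(\lambdac^*=\lambdaqlr^*\) to ``settle the KRW/GFF entries of the \(d=1\) block'', but the theorem makes no such assumption in dimension one; this identity in \(d=1\) is part of what needs to be proved. The paper handles this case entirely differently (Lemma~\ref{lem:nontrivial_mass_gap_regim_GFF_KRW}, \(d=1\) branch): it uses the exact value of \(\lambda_{\sat}^{\KRW}\) supplied by Lemma~\ref{lem:SaturationKRW}, rewrites it as \(\bigl(\sum_{n\geq 1}p(n)\cosh(n|1|)\bigr)^{-1}<1\) for a suitable probability \(p\), and then proves separately, by a direct generating-function estimate, that \(\lambdaqlr^{\KRW}=1\). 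You should replace your treatment of the \(d=1\) KRW/GFF case by an argument along those lines (or supply the missing rate comparison and the proof that \(\lambdaqlr^{\KRW}=1\)).
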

Finally, the next theorem establishes a form of condensation in part of the saturation regime.
\begin{theorem}
	\label{thm:prefactor}
	Suppose \(*\in\{\SAW,\ \Ising,\ \IsingPosFie,\ \FK,\ \Potts,\ \GFF,\ \XY\} \). Suppose moreover that \(\psi\) is one of the following:
	\begin{itemize}
		\item \(\psi(x) \propto \vert x\vert^{-\alpha}\), \(\alpha> 0\),
		\item \(\psi(x) \propto e^{-a\vert x\vert^{\alpha}}\), \(a> 0, 0<\alpha <1\).
	\end{itemize}
	Then, if \(s\in\bbS^{d-1}\) is such that \(\tilde{\Xi}(|\cdot|, \psi, t) <\infty\) for some \(t\) dual to \(s\), there exists \(\lambda_1>0\) such that, for any \(\lambda <\lambda_{1}\), there exist \(c_{\pm}=c_{\pm}(\lambda)>0\) such that 
	\begin{equation}
		c_-(\lambda) J_{0,ns}\leq G_{\lambda}^*(0,ns) \leq c_+(\lambda) J_{0,ns}.
	\end{equation}
\end{theorem}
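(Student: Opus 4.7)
\emph{Decomposition.} The lower bound is immediate: applying~\ref{hyp:J_path_lower_bnd} to the singleton collection \(\Gamma = \{(0, ns)\}\) (the single-step path from \(0\) to \(ns\)) yields \(G_\lambda^*(0, ns) \geq c_\lambda C_\lambda J_{0,ns}\), so the work lies entirely in the matching upper bound. Using~\ref{hyp:weak_SL}, this reduces to the estimate \(G_\mu^{\KRW}(0, ns) \leq C\, J_{0, ns}\) for \(\mu = \alpha \lambda\) sufficiently small.

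\emph{Tilting.} Fix a vector \(t\) dual to \(s\). The identity \(|\gamma_i - \gamma_{i-1}| = t \cdot (\gamma_i - \gamma_{i-1}) + \surcharge_t(\gamma_i - \gamma_{i-1})\) and telescoping of the linear part along any walk from \(0\) to \(ns\) give
\[
G_\mu^{\KRW}(0, ns) = e^{-n|s|} \sum_{k \geq 1} \mu^k q^{*k}(ns), \qquad q(x) := \psi(x)\, e^{-\surcharge_t(x)}.
\]
Since \(\surcharge_t(ns) = 0\) we have \(q(ns) = \psi(ns)\) and \(e^{-n|s|} = J_{0,ns}/\psi(ns)\). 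The hypothesis \(\|q\|_1 = \tilde{\Xi}(|\cdot|, \psi, t) < \infty\) makes the series absolutely convergent as soon as \(\mu \|q\|_1 < 1\), and the problem reduces to showing \(\sum_{k \geq 1} \mu^k q^{*k}(ns) \leq C\, q(ns)\).

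\emph{The sub-exponential convolution estimate.} The heart of the argument is the pointwise bound
\[
q^{*k}(ns) \leq C\, k\, A^{k-1}\, q(ns), \qquad k \geq 1,\; n \text{ large},
\]
with constants \(C\) and \(A\) (the latter of order \(\|q\|_1\)), from which summation against \(\mu^k\) yields the result and fixes \(\lambda_1 \asymp 1/(\alpha A)\). It expresses the classical ``one-big-jump'' principle for sub-exponential densities: the mass of \(q^{*k}\) near \(ns\) is concentrated on configurations in which a single step carries almost all the displacement, so \(q^{*k}\) inherits the pointwise decay of \(q\). Both prescribed families of \(\psi\) enjoy this property. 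In the polynomial case, regular variation yields \(\psi(ns - y) \leq 2^\alpha \psi(ns)\) uniformly for \(|y| \leq n|s|/2\); in the stretched-exponential case \(\psi(x) = e^{-a|x|^\alpha}\) with \(0 < \alpha < 1\), the key input is the quantitative refinement of subadditivity
\[
|y|^\alpha + |ns - y|^\alpha - |ns|^\alpha \geq c_\alpha\, |y|^\alpha \qquad (|y| \leq n|s|/2),
\]
obtained by setting \(u = |y|/(n|s|)\) and bounding \(u^\alpha + (1-u)^\alpha - 1\) from below on \([0, 1/2]\). Combined with \(e^{-\surcharge_t(y) - \surcharge_t(ns - y)} \leq 1\) (subadditivity of \(\surcharge_t\) and \(\surcharge_t(ns) = 0\)), these inputs give \(q(y) q(ns - y)/q(ns) \leq \phi(y)\) with \(\phi\) summable uniformly in \(n\), which treats the case \(k = 2\) after splitting the sum at \(|y| = n|s|/2\) and using symmetry.

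\emph{Induction and main obstacle.} Extending the bound to all \(k\) by induction is the central technical point. Writing \(q^{*(k+1)}(ns) = \sum_y q^{*k}(y)\, q(ns - y)\), one needs a version of the sub-exponentiality estimate that controls \(q^{*k}(y)\) at points \(y\) \emph{off} the ray \(\bbR_+ s\). The exponential penalty \(e^{-\surcharge_t(\cdot)}\) built into \(q\) is what makes this possible: it forces the mass of \(q^{*k}\) to concentrate near directions close to \(s\), so that combined with hypothesis~\ref{hyp:PsiQuasiIsotropic} (which controls \(\psi\) uniformly over angles) the induction hypothesis can be propagated. Handling this uniform concentration -- and in particular tracking the balance between the slow pointwise decay of \(\psi\) on the ray and the exponential decay of \(e^{-\surcharge_t}\) transverse to it -- is the main difficulty; once the uniform bound \(q^{*k}(z) \leq C_k q(z)\) is secured with \(C_k \leq C k A^{k-1}\), summation of the geometric series completes the proof.
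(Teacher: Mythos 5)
Your reduction to the \(\KRW\) via \ref{hyp:weak_SL}, the tilting identity \(G^{\KRW}_\mu(0,ns) = e^{-n|s|}\sum_{k\geq 1}\mu^k q^{*k}(ns)\) with \(q=\psi e^{-\surcharge_t}\), and the target bound \(q^{*k}(ns)\leq C\,k\,A^{k-1} q(ns)\) all match the paper, as does the observation that the lower bound follows at once from \ref{hyp:J_path_lower_bnd}. But the convolution bound \emph{is} the theorem, and your proposal does not prove it: you frame it as an induction on \(k\), observe that the inductive step would require a uniform pointwise bound \(q^{*k}(z)\leq C_k\,q(z)\) at points \(z\) off the ray \(\bbR_+ s\), flag this as the ``central technical point'' and ``main difficulty'', and then conclude with ``once the uniform bound is secured\ldots''. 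That phrase is where the proof should be. As written, the heart of the argument is missing, and it is not even clear the inductive route closes: for \(z\) transverse to \(s\), \(q(z)\) carries the full exponential penalty \(e^{-\surcharge_t(z)}\) while \(q^{*k}(z)\) need not, so the uniform pointwise bound you ask for is delicate at best, and your appeal to \ref{hyp:PsiQuasiIsotropic} (which the theorem does not assume) signals that the route would cost extra hypotheses.

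The paper avoids the off-ray problem entirely and uses no induction. It argues directly: among the \(k\) increments \(y_1,\dots,y_k\) summing to \(ns\), one has \(|y_j|\geq|ns|/k\) by the triangle inequality, which after symmetrization yields a factor \(k\). That one large step has weight at most \(\psi(|ns|/k)\) (drop \(e^{-\surcharge_t}\leq 1\) and use monotonicity of \(\psi\)), and the \emph{specific structure} of \(\psi\) converts this into \(C k^\alpha \psi(ns)\) in the polynomial case (Lemma~\ref{lem:pre_fact_polynomial}), while in the general/stretched-exponential case (Lemma~\ref{lem:pre_fact_fast_dec}) iterating \ref{hyp:psi_hyp2} along the biggest step gives \(\psi(y_k)\prod_{i<k}\psi(y_i)\leq c^k\psi(ns)\prod_{i<k}\psi(y_i)^a\). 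Crucially, the remaining \(k-1\) increments are then summed with \emph{no positional constraint at all}: one just bounds the free sum by \(\bigl(\sum_y\psi(y)e^{-\surcharge_t(y)}\bigr)^{k-1}\), respectively \(\bigl(\sum_y\psi(y)^a e^{-\surcharge_t(y)}\bigr)^{k-1}\), which is finite by hypothesis. There is never a need to track where the intermediate partial sums land relative to the ray; only the total mass of the step weight matters, because the big step has already absorbed the target factor \(\psi(ns)\). (In the polynomial case the paper also first cuts the sum at \(k\lesssim\log n\) so that \(\psi(|ns|/k)=k^\alpha\psi(ns)\) stays meaningful; the tail \(k\gtrsim\log n\) is handled directly from \(A_k(n)\leq e^{-n|s|}(\lambda/\tilde\lambda)^k\).) Your one-big-jump instinct is the right one; replacing the inductive framing with this direct pigeonhole-plus-free-sum argument is precisely what closes the proof.
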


%
\subsection{``Proof'' of Theorem~\ref{thm:main}: organization of the paper}
%

We collect here all pieces leading to the proof of Theorem~\ref{thm:main} and its corollary.
First, we have that any model \(*\in\{\SAW,\ \Ising,\ \IsingPosFie,\ \FK,\ \Potts,\ \GFF,\ \XY\}\) satisfies~\ref{hyp:G_Bounded_Pos_nu_pos},~\ref{hyp:sub_mult},~\ref{hyp:left_cont},~\ref{hyp:weak_SL}, and~\ref{hyp:J_path_lower_bnd} (see Appendix~\ref{app:Properties}).
We omit the explicit model dependence from the notation.
We therefore obtain from Claims~\ref{claim:nu_exists},~\ref{claim:nu_monotone}, and~\ref{claim:nu_trivialUB} and Lemma~\ref{lem:lambda_equal_zero} that, for any \(s\in\bbS^{d-1}\),
\begin{itemize}
	\item \(\nu_s(\lambda)\) is well defined for \(\lambda\in[0, \lambdac)\),
	\item \(\lambda\mapsto \nu_s(\lambda)\) is non-increasing,
	\item \(\lim_{\lambda\searrow 0} \nu_s(\lambda) = |s|\).
\end{itemize}
In particular, setting
\begin{equation}
	\lambda_{\sat} = \lambda_{\sat}(s) = \sup\setof{\lambda\geq 0}{\nu_s(\lambda) =|s|},
\end{equation}
it follows from monotonicity that
\begin{itemize}
	\item for any \(\lambda \in (0, \lambda_{\sat})\), \(\nu_s(\lambda) = |s|\),
	\item for any \(\lambda \in (\lambda_{\sat}, \lambdaqlr)\), \(0<\nu_s(\lambda) < |s|\).
\end{itemize}
Via a comparison with the KRW given by~\ref{hyp:weak_SL}, Lemmas~\ref{lem:SaturationKRW}, and~\ref{lem:SaturationAtSmallLambda} establish that
\begin{equation}
	\tilde{\Xi}(|\cdot|, \psi, t)<\infty \implies \lambda_{\sat}(s)>0,
\end{equation}
while Lemma~\ref{lem:mass_gap_non_summable_surcharge} implies that, when $\psi$ satisfies~\ref{hyp:PsiQuasiIsotropic} and \(\partial\UnitBall\) is quasi-isotropic in direction \(s\) (with an admissible \(t\)),
\begin{equation}
	\tilde{\Xi}(|\cdot|, \psi, t)=\infty \implies \lambda_{\sat}(s)=0,
\end{equation}
via a comparison with a suitable SAW model, allowed by~\ref{hyp:J_path_lower_bnd}.

These results are complemented in Section~\ref{sec:lambda_sat_less_lambda_c} by the inequality \(\lambda_{\sat} < \lambdaqlr\) for some particular cases (as stated in Theorem~\ref{thm:NotSatCloseToLambdaC}), using ``continuity'' properties of the models \emph{at} \(\lambdac\) and the conjectured equality \(\lambdac=\lambdaqlr\). 
Whether \(\lambda_{\sat} < \lambdaqlr\) always holds or not is an open problem (see Section~\ref{sec:open_problems}).

A proof that a condensation phenomenon (Theorem~\ref{thm:prefactor}) indeed occurs is presented in Section~\ref{sec:pre_factor}. It is carried out for a more restricted family of \(\psi\) than our main saturation result and only proves condensation in a restricted regime (see Section~\ref{sec:open_problems} for more details).

%
\subsection{Open problems and conjectures}\label{sec:open_problems}
%

The issues raised in the present work leave a number of interesting avenues open. We list some of them here, but defer the discussion of the issues related to quasi-isotropy to the next section.

\subsubsection{Is \(\lambda_{\sat}\) always smaller than \(\lambdaqlr\)?}

While this work provides precise criteria to decide whether \(\lambda_{\sat}(s)>0\), we were only able to obtain an upper bound in a limited number of cases. It would in particular be very interesting to determine whether it is possible that \(\lambda_{\sat}\) coincides with \(\lambdaqlr\), that is, that the correlation length \emph{remains constant in the whole high-temperature regime}. Let us summarize that in the following
\begin{open}
	Is it always the case that \(\lambda_{\sat}(s) < \lambdaqlr\)?
\end{open}
One model from which insight might be gained is the \(q\)-state Potts model with large \(q\). In particular, one might try to analyze the behavior of \(\nu_s(\lambda)\) for very large values of \(q\), using the perturbative tools available in this regime.

\subsubsection{What can be said about the regularity of \(\lambda\mapsto\nu_s(\lambda)\)?}

In several cases, we have established that, under suitable conditions, \(\lambdaqlr > \lambda_{\sat}(s) > 0\). In particular, this implies that \(\nu_s\) is not analytic in \(\lambda\) \emph{at} \(\lambda_{\sat}(s)\). We believe however that this is the only point at which \(\nu_s\) fails to be analytic in \(\lambda\).

\begin{conjecture}
	The inverse correlation length \(\nu_s\) is always an analytic function of \(\lambda\) on \((\lambda_{\sat}(s), \lambdaqlr)\).
\end{conjecture}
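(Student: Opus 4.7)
The plan is to extract analyticity of $\nu_s$ from an Ornstein--Zernike (OZ) renewal representation that becomes available in the mass-gap regime, and then apply the analytic implicit function theorem to an implicit equation of the form $\tilde{\Psi}_\lambda(t)=1$. By Theorem~\ref{thm:main}, the strict inequality $\nu_s(\lambda)<|s|$ holds throughout $(\lambda_{\sat}(s),\lambdaqlr)$, and this is exactly the entry point for OZ theory.

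First I would upgrade the pointwise bounds $0<\nu_s(\lambda)<|s|$ to a uniform mass gap on every compact sub-interval $K\Subset(\lambda_{\sat}(s),\lambdaqlr)$, i.e., find $\kappa=\kappa(K)>0$ with $\kappa\le\nu_s(\lambda)\le|s|-\kappa$ for all $\lambda\in K$; this is a soft consequence of monotonicity of $\nu_s$ in $\lambda$ and the left-continuity furnished by~\ref{hyp:left_cont}. Second, for each model in Corollary~\ref{cor:main}, I would construct an OZ-type irreducible decomposition adapted to a vector $t$ dual to $s$: using cone-separation geometry in the direction $s$, one writes, for $x$ in a suitable cone around $\bbR_{>0}s$,
\[
	G_\lambda(0,x) = \sum_{n\ge 1}\;\sum_{0=x_0,x_1,\dots,x_n=x}\;\prod_{k=1}^{n}\Psi_\lambda(x_{k-1},x_k),
\]
with irreducible weights $\Psi_\lambda$ enjoying an exponential moment strictly larger than $\nu_s(\lambda)$. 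The inverse correlation length is then characterized implicitly by
\[
	\tilde{\Psi}_\lambda(t(\lambda)) := \sum_{y\in\bbZ^d}\Psi_\lambda(0,y)\,e^{t(\lambda)\cdot y} = 1, \qquad t(\lambda)\cdot s = \nu_s(\lambda).
\]

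Given this representation, the conclusion follows from the analytic implicit function theorem. The map $(\lambda,t)\mapsto\tilde{\Psi}_\lambda(t)$ is jointly analytic on a complex neighborhood of $\{(\lambda,t(\lambda)):\lambda\in K\}$: analyticity in $\lambda$ comes from the cluster, polymer, or random-current expansions underlying the irreducible weights, which converge uniformly on $K$ thanks to the uniform mass gap produced in the first step; analyticity in $t$ follows from the strict exponential-moment bound. Strict convexity of $t\mapsto\tilde{\Psi}_\lambda(t)$ on its real domain of finiteness forces $\nabla_t\tilde{\Psi}_\lambda(t(\lambda))\neq 0$, so the implicit function theorem yields an analytic extension of $\lambda\mapsto t(\lambda)$ to a complex neighborhood of $K$; taking the scalar product with $s$ gives analyticity of $\nu_s(\lambda)=s\cdot t(\lambda)$.

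The main obstacle is the second step: producing, for each spin or percolation model, an OZ irreducible decomposition whose step weights $\Psi_\lambda$ depend analytically on $\lambda$ and carry sharp exponential moments. For $\KRW$ and $\GFF$ this is essentially one-dimensional renewal theory and is elementary once the mass gap is known. For $\SAW$, $\Ising$, $\IsingPosFie$, $\FK$, $\Potts$ and $\XY$, it requires adapting the Campanino--Ioffe--Velenik irreducible-path, FK-cluster, and random-current decompositions from the finite-range setting to exponentially decaying interactions with an arbitrary sub-exponential prefactor $\psi$, and showing that the very corrections that caused saturation when $\lambda<\lambda_{\sat}(s)$ no longer obstruct the OZ machinery once $\lambda>\lambda_{\sat}(s)$. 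This is where the bulk of the technical work would lie, and it is the reason we state the result only as a conjecture.
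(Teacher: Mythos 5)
The statement you are addressing is a \emph{conjecture} in the paper: the authors offer no proof of analyticity of \(\nu_s\) on \((\lambda_{\sat}(s),\lambdaqlr)\), and they explicitly defer even the weaker OZ-asymptotics statement to future work, proving it only for the killed random walk (Lemma~\ref{lem:OZ}). Your proposal is therefore not being measured against an existing argument, and, as you yourself concede in the final paragraph, it is a strategy rather than a proof. The outline is the natural one (it is essentially the Campanino--Ioffe--Velenik route, and it is how one would expect the conjecture to eventually be settled), and your first step is fine: monotonicity of \(\lambda\mapsto\nu_s(\lambda)\) alone gives \(\nu_s(\lambda_2)\le\nu_s(\lambda)\le\nu_s(\lambda_1)<|s|\) on any compact \([\lambda_1,\lambda_2]\subset(\lambda_{\sat}(s),\lambdaqlr)\), so a uniform mass gap on compacts is indeed soft.

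The genuine gaps are in steps two and three, and they are not merely ``technical work to be adapted''. First, the existence of an irreducible (renewal) decomposition with weights \(\Psi_\lambda\) having exponential moments strictly beyond \(\nu_s(\lambda)\) is exactly the ``mass gap condition'' of OZ theory whose possible failure is the subject of this paper; for interactions \(J_x=\psi(x)e^{-|x|}\) it is not known, for \(\lambda>\lambda_{\sat}(s)\), that the condensation phenomenon disappears at the level of irreducible pieces, nor that the generating function \(\tilde{\Psi}_\lambda\) actually attains the value \(1\) at the dual point \(t(\lambda)\) (if it stays below \(1\), the implicit equation you want to differentiate does not characterize \(\nu_s\)). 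Second, and more seriously, your justification of joint analyticity of \((\lambda,t)\mapsto\tilde{\Psi}_\lambda(t)\) rests on ``cluster, polymer, or random-current expansions \ldots which converge uniformly on \(K\) thanks to the uniform mass gap''. A mass gap does not make such expansions converge: convergent cluster/polymer expansions are available only in perturbative regimes (small \(\lambda\), large field, etc.), not on arbitrary compact subsets of \((\lambda_{\sat}(s),\lambdaqlr)\), which may reach all the way up to the critical point. Even in the finite-range setting, where the OZ decomposition of \cite{Campanino+Ioffe+Velenik-2003,Campanino+Ioffe+Velenik-2008} is available throughout the subcritical regime, analyticity of the irreducible weights in \(\lambda\) (let alone a complex-analytic extension uniform on compacts) has not been established, which is precisely why analyticity of the inverse correlation length in the whole high-temperature regime remains open. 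So the implicit-function-theorem endgame is sound, but both inputs it needs --- the renewal structure with sharp exponential moments for \(\lambda>\lambda_{\sat}(s)\), and analytic \(\lambda\)-dependence of the irreducible weights in a non-perturbative regime --- are unproven, and the second cannot be obtained by the expansion argument you invoke.
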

(Of course, the inverse correlation length is trivially analytic in \(\lambda\) on \([0,\lambda_{\sat}(s))\) when \(\lambda_{\sat}(s)>0\).)
\begin{conjecture}
	Assume that \(\lambda_{\sat}(s)>0\). Then, the inverse correlation length \(\nu_s\) is a continuous function of \(\lambda\) at \(\lambda_{\sat}(s)\).
\end{conjecture}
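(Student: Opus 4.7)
The plan is to split continuity at $\lambda_{\sat}(s)$ into its left and right halves, of which the left is essentially automatic and the right carries all the content. For the left limit, combining the sub-multiplicativity~\ref{hyp:sub_mult} with a standard Fekete argument gives
\begin{equation*}
	\nu_s(\lambda)=\inf_{n\geq 1}\frac{-\log(a_\lambda G_\lambda(0,ns))}{n}.
\end{equation*}
By~\ref{hyp:left_cont} each individual term is left-continuous in $\lambda$ (up to tracking the modest regularity of $a_\lambda$, which in each model follows from the GKS/FKG-type proof of~\ref{hyp:sub_mult}), so an infimum of left-continuous functions is upper-semicontinuous from the left; combined with the monotonicity of $\nu_s$ this yields $\lim_{\lambda\nearrow\lambda_{\sat}(s)}\nu_s(\lambda)=\nu_s(\lambda_{\sat}(s))=|s|$. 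Thus the conjecture reduces to the right-limit statement $\lim_{\lambda\searrow\lambda_{\sat}(s)}\nu_s(\lambda)=|s|$.

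For this right-limit I would first treat the KRW, where the geometric resummation
\begin{equation*}
	\sum_{x\in\bbZ^d}G^{\KRW}_\lambda(0,x)\,e^{t\cdot x}=\frac{1}{1-\lambda\sum_{x\in\bbZ^d}J_{0x}e^{t\cdot x}}
\end{equation*}
yields, by a standard exponential-tilt argument, a variational representation
\begin{equation*}
	\nu^{\KRW}_s(\lambda)=\sup\Bsetof{t\cdot s}{t\in\bbR^d,\ \sum_{x\in\bbZ^d}J_{0x}e^{t\cdot x}\leq 1/\lambda}.
\end{equation*}
The domain of finiteness of the Laplace transform of $J$ is controlled by $\Wulff$ and the prefactor $\psi$, and the saturation regime corresponds to the maximizer $t^*(\lambda)$ being pinned at a vector dual to $s$ on $\partial\Wulff$; as $\lambda$ decreases past $\lambda_{\sat}(s)$ the feasible set shrinks continuously in Hausdorff distance. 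Berge's maximum theorem then delivers continuity of the supremum, and in particular the desired right-continuity at $\lambda_{\sat}(s)$ for $\KRW$.

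To export this to the other models, one has the one-sided comparison $G_\lambda\leq C\,G^{\KRW}_{\alpha\lambda}$ from~\ref{hyp:weak_SL}, which yields the lower bound $\nu_s(\lambda)\geq \nu^{\KRW}_s(\alpha\lambda)$; but because of the unknown factor $\alpha$ this only controls $\nu_s$ away from the target threshold and by itself does not rule out a jump of $\nu_s$ at $\lambda_{\sat}(s)$ itself. The real difficulty, and the main obstacle I foresee, is producing a matching upper bound on $\nu_s(\lambda)$ (equivalently, a lower bound on $G_\lambda(0,ns)$) that is quantitatively sharp right at the threshold. The natural route is to implement an Ornstein--Zernike / renewal-type decomposition of $G_\lambda$ that remains meaningful for $\lambda\gtrsim\lambda_{\sat}(s)$, viewing this regime as a small perturbation of the condensation picture of Theorem~\ref{thm:prefactor}, where the direct correlation function $D_\lambda$ is still well approximated by $J$ up to corrections that vanish as $\lambda\searrow\lambda_{\sat}(s)$. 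In the absence of a fully developed OZ theory valid up to the threshold, this perturbative construction is the genuinely hard part, and is essentially a quantitative form of the statement the conjecture asserts; this is presumably why the authors state it as a conjecture rather than a theorem.
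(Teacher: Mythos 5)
This statement is a \emph{conjecture} in the paper (it appears in Section 1.14, ``Open problems and conjectures''); the authors offer no proof, so there is nothing to compare your argument against. What I can do is assess your outline on its own terms.

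The left-limit part is indeed essentially handled by material in the paper: Lemma~\ref{lem:nu_left_cont} gives left-continuity of $\lambda\mapsto\nu_s(\lambda)$ at any $\lambda'\in(0,\lambdac]$ where $G_{\lambda'}$ is defined and $a_\lambda$ is uniformly bounded below near $\lambda'$, and both hypotheses are checked for all models at $\lambda'=\lambda_{\sat}(s)<\lambdac$. Combined with $\nu_s\equiv|s|$ on $[0,\lambda_{\sat}(s))$, this gives $\nu_s(\lambda_{\sat}(s))=|s|$ and reduces the conjecture to the right limit, exactly as you say. Your phrasing about ``infimum of left-continuous functions is upper-semicontinuous from the left'' is a bit garbled (an infimum of continuous functions is upper-semicontinuous, and you would want to pair that with monotonicity to conclude), but the paper's Lemma~\ref{lem:nu_left_cont} does the job cleanly and you should just cite it.

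For $\KRW$, the variational formula
\[
\nu^{\KRW}_s(\lambda)=\sup\bsetof{t\cdot s}{\textstyle\sum_x J_{0x}\,e^{t\cdot x}\leq 1/\lambda}
\]
is not merely a ``standard exponential-tilt argument'': the $\geq$ direction is elementary (Markov's inequality applied to $\sum_x G_\lambda(0,x)e^{t\cdot x}=\bigl(1-\lambda\hat J(t)\bigr)^{-1}$ whenever $\lambda\hat J(t)<1$), but the $\leq$ direction is a genuine claim about the \emph{closure of the domain of the Laplace transform of $G_\lambda$} equalling the Wulff shape $\Wulff_\nu$ of the norm $x\mapsto\nu_x(\lambda)$, and this is precisely the kind of lower-bound control on $G_\lambda$ that is delicate at the boundary. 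I believe it is provable (one direction from Claim~\ref{claim:nu_exists} and the resulting inclusion $\Wulff_\nu^\circ\subset\mathrm{dom}\,\hat G_\lambda$; the other from the geometric-series identity plus lower semicontinuity and strict radial monotonicity of $\hat J$), but you need to write it out. Two points deserve attention. First, if the formula holds, it pins down $\lambda_{\sat}^{\KRW}=\tilde\lambda^{\KRW}=\tilde\Xi^{-1}$ in \emph{all} dimensions, which would strengthen Lemma~\ref{lem:SaturationKRW} (the paper proves this identity only when $d=1$); you should either prove this as a separate lemma or at least flag that your formula entails it. Second, your invocation of Berge's maximum theorem needs the constraint correspondence to be lower hemicontinuous, which at the threshold level $\hat J(t)=\tilde\Xi$ requires the radial continuity and strict monotonicity of $\hat J$ along rays to $t_s\in\partial\Wulff$; this is true (using $\hat J(t)=\sum_x\psi(x)e^{-|x|}\cosh(t\cdot x)$ and its convexity and lower semicontinuity), but it should be spelled out rather than left implicit. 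Also, a small slip: as $\lambda$ decreases, $1/\lambda$ increases, so the feasible set \emph{grows}, not shrinks.

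For the general models the situation is as you describe: the one-sided comparison in~\ref{hyp:weak_SL} only gives $\nu_s(\lambda)\geq\nu_s^{\KRW}(\alpha\lambda)$, which controls $\nu_s$ from below but says nothing about how fast $\nu_s$ can drop below $|s|$ once $\lambda$ exceeds $\lambda_{\sat}(s)$; a matching lower bound on $G_\lambda(0,ns)$ that degenerates gracefully as $\lambda\searrow\lambda_{\sat}(s)$ would amount to an OZ/renewal analysis valid up to the saturation threshold, which is exactly what is missing. Your closing assessment that this is why the statement is a conjecture rather than a theorem is accurate, and it is good practice to be explicit about where the genuine difficulty lies.
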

Once this is settled, one should ask more refined questions, including a description of the qualitative behavior of \(\nu_s(\lambda)\) close to \(\lambda_{\sat}(s)\), similarly to what was done in~\cite{Ott+Velenik-2018} in a case where a similar saturation phenomenon was analyzed in the context of a Potts model/FK percolation with a defect line.

\subsubsection{Sharp asymptotics for \(G_\lambda(0,x)\)}

As we explain in Section~\ref{sec:pre_factor}, the transition from the saturation regime \([0, \lambda_{\sat}(s))\) to the regime \((\lambda_{\sat}(s), \lambdaqlr)\) manifests itself in a change of behavior of the prefactor to the exponential decay of the 2-point function \(G_\lambda(0,ns)\). Namely, in the former regime, the prefactor is expected to always behave like \(\psi(ns)\), while in the latter regime, it should follow the usual OZ decay, that is, be of order \(n^{-(d-1)/2}\). This change is due to the failure of the mass gap condition of the Ornstein--Zernike theory when \(\lambda<\lambda_{\sat}(s)\). It would be interesting to obtain more detailed information.
\begin{conjecture}
	For all \(\lambda\in(\lambda_{\sat}(s), \lambdaqlr)\), \(G_\lambda(0,ns)\) exhibits OZ behavior: there exists \(C=C(s,\lambda) > 0\) such that
	\[
		G_\lambda(0,ns) = C n^{-(d-1)/2}\, e^{-\nu_s(\lambda) n} (1+\sfo(1)).
	\]
\end{conjecture}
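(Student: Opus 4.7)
The plan is to adapt the Ornstein--Zernike (OZ) program developed in~\cite{Campanino+Ioffe+Velenik-2003,Campanino+Ioffe+Velenik-2008} to the present setting of exponentially decaying interactions and general norms. For each model in the list one first needs a path / random walk representation of \(G_\lambda(0,x)\): this is direct for \(\KRW\), \(\SAW\) and \(\GFF\); for \(\Ising\) and \(\IsingPosFie\) it comes from the random current representation; for \(\FK\), \(\Potts\) and \(\XY\) from the random cluster / loop representation. Fixing an admissible dual vector \(t\) to \(s\), I would then tilt the step weights by \(e^{t\cdot(\gamma_i-\gamma_{i-1})}\), writing
\[
G_\lambda(0,ns) = e^{-\nu_s(\lambda)n}\,\tilde G_\lambda(0,ns),
\]
where \(\tilde G_\lambda\) generates tilted paths whose effective increments have a drift in direction \(s\). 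Strict convexity/quasi-isotropy of \(\partial\UnitBall\) at \(s\) ensures that the tilted weights are summable and that the tilted distribution concentrates on trajectories essentially parallel to \(s\).

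The second step is the OZ irreducible decomposition: each path is cut into a concatenation of irreducible pieces (no hyperplane orthogonal to \(t\) is crossed twice by the underlying trajectory), so that \(\tilde G_\lambda\) becomes the Green's function of a renewal process whose step law \(\tilde D_\lambda\) is the generating function of irreducible pieces. The crucial input is the \emph{mass gap condition}: there exists \(c=c(\lambda,s)>0\) such that
\[
\tilde D_\lambda(0,x) \leq e^{-c|x|}\,\tilde G_\lambda(0,x).
\]
This is a quantitative strengthening of the non-saturation statement of Theorem~\ref{thm:main}: once \(\lambda>\lambda_{\sat}(s)\), the full correlator decays strictly better than \(J\), and typical paths should spread into many irreducible pieces rather than condense onto one. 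Upgrading this qualitative picture to a uniform bound on \(\tilde D_\lambda\), by iterating the Simon--Lieb type inequality~\ref{hyp:weak_SL} and using the representation to compare irreducible pieces with full two-point functions at a slightly smaller inverse correlation length, is the main obstacle.

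Once the mass gap is in hand, the irreducible step law has exponential tails, a non-zero drift in direction \(s\), and (again using quasi-isotropy of \(\partial\UnitBall\) at \(s\)) a non-degenerate covariance in the transverse directions. A classical anisotropic local central limit theorem for the associated renewal measure then yields
\[
\tilde G_\lambda(0,ns) = C(s,\lambda)\,n^{-(d-1)/2}\bigl(1+\sfo(1)\bigr),
\]
which, after reinstating the factor \(e^{-\nu_s(\lambda)n}\), gives the conjectured asymptotics. The delicate points will be (i) establishing the mass gap near \(\lambda_{\sat}(s)\), where the direct correlator is barely separated from \(\tilde G_\lambda\) and where one likely benefits from the conjectured continuity of \(\nu_s\) at \(\lambda_{\sat}(s)\), and (ii) establishing it uniformly up to \(\lambdaqlr\), where the transverse covariance of the effective renewal must be prevented from degenerating; for models in which \(\lambdaqlr = \lambdac\) is known, this is closely related to the finiteness of the susceptibility throughout the subcritical regime.
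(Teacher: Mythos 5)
This statement is explicitly labeled a \emph{Conjecture} in the paper; the authors do not prove it. What the paper does prove is the special case of the killed random walk (Lemma~\ref{lem:OZ} in Section~\ref{sec:pre_factorOZ}), and they explicitly write that extending it to general models ``would be much more intricate'' and is deferred to future work. So your proposal should be compared on two levels: against the KRW argument (the only proof that exists in the paper) and against the full claim (which is open).

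For the $\KRW$ case your outline is structurally heavier than what is actually needed, and in one place slightly off. In the $\KRW$ model the two-point function \emph{is already} a renewal sum: after tilting by the dual vector $\tilde t_s$ (dual to $s$ with respect to the norm $\nu(\lambda)$, \textbf{not} with respect to $|\cdot|$ as your ``admissible $t$'' suggests), the step weight $w(y) = \lambda\, e^{\tilde t_s\cdot y - |y|}\psi(y)$ is a bona fide probability measure with exponential tails, precisely because $\nu_s(\lambda) < |s|$. No irreducible decomposition is required, and no mass gap hypothesis on an irreducible kernel needs to be verified; the LLT for i.i.d.\ sums applies directly. The choice of tilting vector matters: quasi-isotropy of $\partial\UnitBall$ is a hypothesis about the interaction norm $|\cdot|$, whereas the local CLT controls fluctuations through the geometry of the $\nu$-ball $\UnitBall_\nu$, which is a different (and a priori unknown) convex body.

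For the general models the gap in your argument is real and is exactly the reason the paper leaves this as a conjecture. Theorem~\ref{thm:main} in the ``non-saturation'' regime yields only the strict inequality $\nu_s(\lambda) < |s|$, i.e.\ it separates $G_\lambda$ from the \emph{interaction} $J$. The OZ program needs something different and stronger: a uniform exponential separation between the irreducible kernel $\tilde D_\lambda$ and the full tilted correlator $\tilde G_\lambda$. That statement is not a consequence of $\nu_s(\lambda) < |s|$, and neither~\ref{hyp:weak_SL} (which compares $G_\lambda$ to $G^{\KRW}_{\alpha\lambda}$ from above) nor~\ref{hyp:J_path_lower_bnd} (which compares $G_\lambda$ to SAW-type sums from below) gives it; you cannot ``iterate the Simon--Lieb inequality'' to extract a mass gap for the irreducible kernel without a genuinely new argument (this is what~\cite{Campanino+Ioffe+Velenik-2003,Campanino+Ioffe+Velenik-2008,Ott-2019} supply for finite-range interactions, and what is missing here). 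You do flag this as the main obstacle, which is correct; but as written the step ``upgrading this qualitative picture to a uniform bound'' is exactly the content of the open problem, not a step that follows from what the paper provides. The second delicate point you mention (non-degenerate transverse covariance up to $\lambdaqlr$) is likewise unproved. In short: a reasonable roadmap, but it is a roadmap with the same hole the authors acknowledge, and it does not reduce to, nor improve on, the one case (KRW) the paper actually settles.
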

This type of asymptotic behavior has only been established for finite-range interactions: see~\cite{Campanino+Ioffe+Velenik-2003} for the Ising model at \(\beta<\betac\), \cite{Campanino+Ioffe+Velenik-2008} for the Potts model (and, more generally FK percolation) at \(\beta<\betac\) and~\cite{Ott-2019} for the Ising model in a nonzero magnetic field (see also~\cite{Ott+Velenik-2019} for a review).
We shall come back to this problem in a future work. In the present paper, we only provide a proof in the simplest setting, the killed random walk (see Section~\ref{sec:pre_factorOZ}).

\smallskip
One should also be able to obtain sharp asymptotics in the saturation regime, refining the results in Section~\ref{sec:pre_factor}. Let \(t\) be a dual vector to \(s\). We conjecture the following to hold true.
\begin{conjecture}
	For all \(\lambda\in [0, \lambda_{\sat}(s))\), there exists $C(\lambda,s)>0$ such that \(G_\lambda(0,ns)\) exhibits the following behavior:
	\[
		G_\lambda(0,ns) = C(\lambda,s) \, \psi(ns)\, e^{-|s| n} (1+\sfo(1)),
	\]
\end{conjecture}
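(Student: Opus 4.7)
The proposed approach refines the two-sided bound of Theorem~\ref{thm:prefactor} into an asymptotic equivalence, based on the condensation picture outlined in the paper: in the saturation regime, $G_\lambda(0,ns)$ should be dominated by paths consisting of a single giant step connecting microscopic excursions at each endpoint, and the task is to make this precise and extract the leading constant.

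The first ingredient is a path representation of the form $G_\lambda(0,x) = \sum_\gamma \prod_i D_\lambda(\gamma_{i-1},\gamma_i)$ (or two-sided bounds of this shape), with a direct correlation satisfying $D_\lambda(0,y) \asymp \lambda \psi(y) e^{-|y|}$. This is immediate for $\KRW$ and $\SAW$; for $\Ising, \Potts, \FK, \XY$ and $\GFF$ it requires combining the weak Simon--Lieb inequality underlying~\ref{hyp:weak_SL} with an irreducibility decomposition into short-range loops (random current, random cluster, or random walk representation). Each path $\gamma$ is then split according to its longest edge, written $(a, ns-b)$ after relabeling: the weight factorizes as $D_\lambda(a, ns-b) \cdot w_{\mathrm{sm}}(0 \to a) \cdot w_{\mathrm{sm}}(ns-b \to ns)$, where $w_{\mathrm{sm}}$ is a partial Green's function over paths whose steps are strictly shorter than $|ns-b-a|$.

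The analytic heart of the argument is the surcharge identity: any $k$-step sequence $y_1,\dots,y_k$ summing to $ns$ satisfies $\sum_i |y_i| = |s| n + \sum_i \surcharge_t(y_i)$, so the $k$-step contribution is controlled by $\lambda^k e^{-|s|n} (\psi e^{-\surcharge_t})^{*k}(ns)$. The summability $\tilde{\Xi}(|\cdot|,\psi,t)<\infty$ (guaranteed by $\lambda < \lambda_{\sat}(s)$ via Theorem~\ref{thm:main}) puts one in the heavy-tailed regime, and the one-big-jump principle, combined with~\ref{hyp:PsiQuasiIsotropic} and quasi-isotropy of $\partial \UnitBall$ at $s$, should yield $(\psi e^{-\surcharge_t})^{*k}(ns) = k\,\psi(ns)\,\tilde{\Xi}^{k-1}(1+\sfo(1))$. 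Applied both to the whole path and to the two endpoint excursions separately, this reduces the sum to one over bounded $a, b$; there, the expansion $D_\lambda(a,ns-b) = \lambda \psi(ns) e^{-|s|n} e^{(a+b)\cdot t}(1+\sfo(1))$, coming from slow variation of $\psi$ and quasi-isotropy, decouples the two ends and delivers $C(\lambda,s) = \lambda \bigl(\sum_a w_\lambda(a) e^{a\cdot t}\bigr)^2$, each factor converging precisely because $\tilde{\Xi}<\infty$.

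The main obstacle is the sharp one-big-jump estimate for $(\psi e^{-\surcharge_t})^{*k}(ns)$ under the bare assumption~\ref{hyp:PsiQuasiIsotropic}; the cleanest setting is that of Theorem~\ref{thm:prefactor} (power-law or stretched-exponential $\psi$), where classical heavy-tailed results apply off the shelf, and an extension to general quasi-isotropic $\psi$ likely requires a case analysis depending on the tail behavior of $\psi e^{-\surcharge_t}$. A secondary, model-dependent obstacle is promoting~\ref{hyp:weak_SL} into the sharp asymptotic $D_\lambda(0,y) = \lambda \psi(y) e^{-|y|}(1+\sfo(1))$ needed to pin down $C(\lambda,s)$, which typically demands a refined cluster or random-current expansion beyond the soft comparison built into~\ref{hyp:weak_SL}.
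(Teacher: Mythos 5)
This statement is, in the paper, explicitly a \emph{conjecture}, placed in the open-problems section and preceded by the remark ``We shall come back to this problem in a future work.'' The paper does not prove it. What the paper does prove is the strictly weaker Theorem~\ref{thm:prefactor}: two-sided bounds \(c_- J_{0,ns} \leq G_\lambda(0,ns) \leq c_+ J_{0,ns}\), for a restricted family of prefactors (power-law or stretched-exponential) and only for \(\lambda < \lambda_1\) where \(\lambda_1\) is not claimed to coincide with \(\lambda_{\sat}(s)\). So there is no paper proof to compare yours against; what you have written is a program for an open problem.

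That said, a few concrete remarks on the program itself. First, there is a logical slip in how you invoke the saturation threshold: you write that \(\lambda < \lambda_{\sat}(s)\) ``guarantees'' \(\tilde\Xi(|\cdot|,\psi,t) < \infty\) via Theorem~\ref{thm:main}, but \(\tilde\Xi\) depends only on \(\psi\), \(|\cdot|\) and \(s\), not on \(\lambda\); the implication in Theorem~\ref{thm:main} runs the other way (\(\tilde\Xi < \infty \Rightarrow \lambda_{\sat}(s) > 0\)). The parameter that actually varies with \(\lambda\) in the paper's arguments is the quantity \(\lambda \cdot \tilde\Xi\) (or \(\alpha\lambda\cdot\tilde\Xi\)), and the bounds of Lemmas~\ref{lem:pre_fact_polynomial} and~\ref{lem:pre_fact_fast_dec} degenerate as this approaches \(1\); extending to the whole interval \([0,\lambda_{\sat}(s))\) would itself require new input, e.g.\ a proof that no intermediate regime exists. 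Second, the two obstacles you flag are indeed the crux, and neither is a mere technicality. The one-big-jump asymptotics \((\psi e^{-\surcharge_t})^{*k}(ns) = k\,\psi(ns)\,\tilde\Xi^{k-1}(1+\sfo(1))\) is delicate even for \(\KRW\): the relevant distribution is genuinely \(d\)-dimensional, anisotropic through \(\surcharge_t\), and not radially monotone, so the standard one-dimensional subexponential theory does not apply off the shelf; one needs a version of the principle tailored to the geometry of \(\partial\UnitBall\) near \(s_0\), with uniformity in \(k\) up to \(\sfO(\log n)\). For the interacting models the situation is worse: Assumptions~\ref{hyp:weak_SL} and~\ref{hyp:J_path_lower_bnd} give only one-sided comparisons with (killed or self-avoiding) random walks with different constants, and nothing in the paper comes close to the sharp asymptotic equality \(D_\lambda(0,y) = \lambda\psi(y)e^{-|y|}(1+\sfo(1))\) for a genuine direct-correlation kernel satisfying an exact renewal identity. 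Producing such a kernel is essentially the content of Ornstein--Zernike theory \emph{below} the mass-gap threshold, which is precisely what the paper says is open. So the proposal is a sensible roadmap, but as it stands it has genuine unfilled gaps and does not constitute a proof -- which is consistent with the paper labelling the statement a conjecture.
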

In this statement, $C(\lambda,s)$ depends also on the model considered. Similar asymptotics have been obtained for models with interactions decaying slower than exponential: see~\cite{Newman+Spohn-1998} for the Ising model and~\cite{Aoun-2020} for the \(q\)-state Potts model. In those cases, the constant $C(\lambda,s)$ is replaced by the susceptibility divided by $q$.

\medskip
Finally, the following problem remains completely open.
\begin{open}
	Determine the asymptotic behavior of \(G_\lambda(0,ns)\) at \(\lambda_{\sat}(s)\).
\end{open}

\subsubsection{Sharpness}

In its current formulation, Theorem~\ref{thm:NotSatCloseToLambdaC} partially relies on the equality between \(\lambdac\) and \(\lambdaqlr\). As already mentioned, we expect this to be true for all models considered in the present work.
\begin{conjecture}
	For all models considered in this work, \(\lambdac=\lambdaqlr\). 
\end{conjecture}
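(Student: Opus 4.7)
The plan is to tackle this conjecture model by model, since by definition $\lambdaqlr\le\lambdac$ in all the settings listed, so the content lies entirely in the reverse inequality: one must show that $\nu_s(\lambda)>0$ throughout $[0,\lambdac)$. I would split the analysis into three groups based on the available sharpness machinery.

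First, for $\KRW$ and $\GFF$ the claim is essentially an exercise. Writing $G^{\KRW}_\lambda(0,x)=\sum_{n\ge 0}\lambda^n P^{*n}(x)$ and using the identity~\eqref{eq:GFF_to_KRW}, the sub-exponential decay condition~\eqref{eq:psi_subexp} guarantees that the step distribution $P$ has finite exponential moments in every direction, and a standard Cram\'er-type large-deviation argument produces a strictly positive rate $\nu_s(\lambda)>0$ for every $\lambda<1=\lambdac$. For $\SAW$, I would combine the pointwise bound $G^{\SAW}_\lambda\le G^{\KRW}_\lambda$ with a Hammersley--Welsh style subadditive argument on bridges adapted to the long-range weights $J_{xy}$ to pass from finiteness of the susceptibility below $\lambdac^{\SAW}$ to exponential decay of the full 2-point function.

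Second, for the lattice spin and percolation models $\Ising$, $\IsingPosFie$, $\Potts$, $\FK$ (including $\Bern$), I would invoke modern sharpness-of-the-phase-transition technology. For Ising at $h=0$, either the Aizenman--Barsky--Fern\'andez differential inequalities or the Duminil-Copin--Tassion finite criterion (based on the random-current representation) deliver exponential decay of $\mu^{\Ising}_{\beta,0}(\sigma_0\sigma_x)$ for all $\beta<\betac$, and the $\IsingPosFie$ truncated function is then controlled via Lebowitz's inequality. For FK percolation with $q\ge 1$ (hence Bernoulli percolation and, via~\eqref{eq:Potts_FK_Corresp}, the Potts model), I would run the Duminil-Copin--Raoufi--Tassion OSSS-based proof; the skeleton of that argument is insensitive to the interaction range as long as one feeds in a Simon--Lieb type input, which is essentially what assumption~\ref{hyp:weak_SL} provides.

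The main obstacle is the $\XY$ model. In $d=1$ the claim is automatic since $\lambdac=\infty$ and Dobrushin-type uniqueness gives exponential decay at every $\beta$, and in $d=2$ it is tautological given that the paper defines $\lambdac$ precisely as the threshold for failure of exponential decay. The genuinely open case is $d\ge 3$, where $\lambdac=\betac^{\XY}$ is the symmetry-breaking point. Here I would try to adapt the recent random-path and loop-soup representations for $\mathrm{O}(N)$ models (Lees--Taggi and related constructions) so as to apply an OSSS-style sharpness argument, reducing the problem to a percolation question on the associated geometric representation. Making this work uniformly in the long-range exponentially-decaying setting, under assumptions as weak as~\ref{hyp:G_Bounded_Pos_nu_pos}--\ref{hyp:J_path_lower_bnd}, is the step I would expect to resist, which is likely the reason the authors record the statement as a conjecture rather than a theorem.
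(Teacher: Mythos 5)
The statement you are addressing is recorded in the paper as a \emph{conjecture}; there is no proof in the paper to compare against, so the relevant question is whether your plan closes the gap. It does not, and in several places it understates the difficulty in ways that matter.

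For $\KRW$ (hence $\GFF$ via~\eqref{eq:GFF_to_KRW}) your generating-function argument is sound: $\hat P(t)=\sum_y e^{t\cdot y}\psi(y)e^{-|y|}$ is finite and continuous on the interior of $\Wulff$, $\hat P(0)=1$, and $\sum_x e^{t\cdot x}G_\lambda^{\KRW}(0,x)=(1-\lambda\hat P(t))^{-1}$ is finite for $t$ small whenever $\lambda<1=\lambdac$, which gives $\nu_s(\lambda)>0$ for every direction. This is in fact exactly the computation the paper performs in the $d=1$ part of Lemma~\ref{lem:nontrivial_mass_gap_regim_GFF_KRW}, and it is dimension-independent. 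So for these two models you are right, and this part is indeed an exercise.

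The problems start with the other models, where the gaps are genuine. For $\SAW$, the pointwise bound $G^{\SAW}_\lambda\le G^{\KRW}_\lambda$ is vacuous on the entire interval $[1,\lambdac^{\SAW})$, and a Hammersley--Welsh bridge argument for step weights $J_{xy}$ with unbounded range is not a known black box: a single open step can jump across the separating hyperplane, so the classical bridge/irreducible decomposition and the associated generating-function estimates do not transfer verbatim. For $\Ising$, $\FK$, $\Potts$, $\Bern$, you invoke ABF, Duminil-Copin--Tassion and the OSSS machinery, but those results, as cited in this paper (and as you would use them), establish sharpness in the form ``finite susceptibility below $\lambdac$'' or exponential decay of $\Phi(0\leftrightarrow\partial\Lambda_n)$ for nearest-neighbor interactions. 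Turning finite susceptibility into a strictly positive inverse correlation length $\nu_s$ for a long-range model with exponential tails requires a Simon--Lieb iteration adapted to non-local kernels, and assumption~\ref{hyp:weak_SL} cannot supply it: it compares $G_\lambda$ to $G^{\KRW}_{\alpha\lambda}$, which is infinite (or just has vanishing rate) exactly in the regime $\lambda\nearrow\lambdac$ that the conjecture is about. So ``the skeleton is insensitive to the interaction range as long as one feeds in~\ref{hyp:weak_SL}'' is not correct; the whole content of the conjecture lives beyond the reach of~\ref{hyp:weak_SL}. For $\IsingPosFie$, Lebowitz's inequality bounds the truncated two-point function by the $h=0$ two-point function, which decays only for $\beta<\betac$; it does not by itself give exponential decay of truncated correlations for $h>0$ and \emph{all} $\beta$, and the long-range version of the analyticity/decay-in-a-field results you would need is not off the shelf. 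You correctly single out $\XY$ in $d\ge3$ as the hard case, but it is worth being blunter: sharpness for $\XY$ (or $O(N)$, $N\ge2$) is open already for nearest-neighbor interactions, so ``adapt a loop representation and run OSSS'' is a research program, not a proof. In summary, your outline correctly identifies the structure of the problem and the easy endpoints, but except for $\KRW$/$\GFF$ (and $d=1$, which the paper already settles) each bullet hides a step that is either not in the literature for exponentially decaying long-range interactions or is genuinely open.
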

We plan to come back to this issue in a future work.

%
\subsection{Behavior when quasi-isotropy fails}\label{sec:FailureQuasiIsotropy}
%

In this section, we briefly discuss what we know about the case of a direction \(s\in\bbS^{d-1}\) in which the quasi-isotropy condition fails.
As this remains mostly an open problem, our discussion will essentially be limited to one particular example.
What remains valid more generally is discussed afterwards.

\medskip
We restrict our attention to \(d=2\).
Let us consider the norm \(|\cdot|\) whose unit ball consists of four quarter-circles of (Euclidean) radius \(\frac12\) and centers at \((\pm\frac12,\pm\frac12)\), joined by 4 straight line segments; see Fig.~\ref{fig:surcharge}, left.
(The associated Wulff shape is depicted in the same figure, middle.)

We are interested in the direction \(s=\frac1{\sqrt{5}}(2,1)\), in which \(\partial\UnitBall\) is \emph{not} quasi-isotropic.
The corresponding dual vector is \(t=(1,0)\).
The associated surcharge function \(\surcharge_t\) is plotted on Fig.~\ref{fig:surcharge}, right.
Observe how the presence of a facet with normal \(t\) in \(\partial\UnitBall\) makes the surcharge function degenerate: the surcharge associated to any increment in the cone \(\setof{(x,y)\in\bbZ^2}{0\leq x\leq \abs{y}/2}\) vanishes.
The direction \(s\) falls right at the boundary of this cone of zero-surcharge increments.
\begin{figure}
	\centering
	\includegraphics[width=4cm]{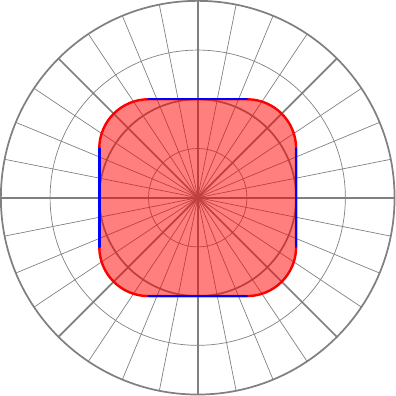}
	\hspace{1cm}
	\includegraphics[width=4cm]{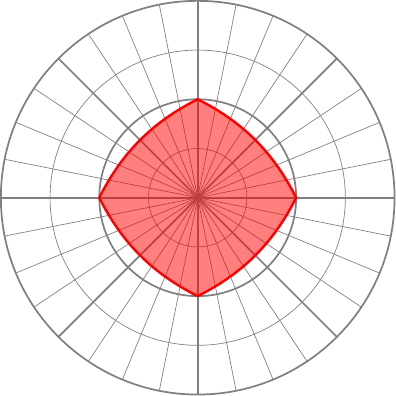}
	\hspace{1cm}
	\includegraphics[width=4cm]{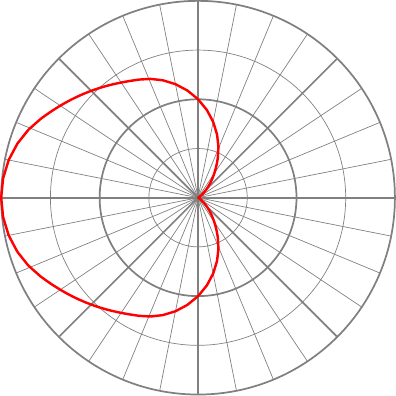}
	\caption{Left: the unit ball associated to the norm \(|\cdot|\) in the example of Section~\ref{sec:FailureQuasiIsotropy}. Middle: the corresponding Wulff shape. Right: polar plot of the surcharge function associated to the direction \(s=\frac{1}{\sqrt{5}}(2,1)\).}	
	\label{fig:surcharge}
\end{figure}

A priori, our criteria do not allow us to decide whether \(\lambda_{\sat}(s)>0\), since \(\partial\UnitBall\) (and thus the surcharge function) displays qualitatively different behaviors on each side of \(s\).
However, it turns out that, in this particular example, one can determine what is happening, using a few observations.

First, the argument in Lemma~\ref{lem:mass_gap_non_summable_surcharge} still applies provided that the sums corresponding to both halves of the cone located on each side of \(s\) diverge.
The corresponding conditions ensuring that \(\lambda_{\sat}(s)=0\) as given in~\eqref{eq:ExplicitCondition}, reduce to
\[
	\sum_{\ell\geq 1} \ell \psi_0(\ell) = \infty
\]
for the cone on the side of the facet, and
\[
	\sum_{\ell\geq 1}  \ell^{1/2} \psi_0(\ell) = \infty
\]
on the side where the curvature is positive.
Obviously, both sums diverge as soon as the second one does, while both are finite whenever the first one is.
We conclude from this that \(\lambda_{\sat}(s) > 0\) when
\[
	\sum_{\ell\geq 1} \ell \psi_0(\ell) < \infty,
\]
while \(\lambda_{\sat}(s) = 0\) when
\[
	\sum_{\ell\geq 1} \ell^{1/2} \psi_0(\ell) = \infty.
\]
Of course, this leaves undetermined the behavior when both
\begin{equation}\label{eq:ImpossibleGap}
	\sum_{\ell\geq 1} \ell \psi_0(\ell) = \infty
	\quad\text{ and }\quad
	\sum_{\ell\geq 1} \ell^{1/2} \psi_0(\ell) < \infty.
\end{equation}
However, the following simple argument allows one to determine what actually occurs in such a case.
First, observe that, since \(\nu_{s'}\leq |s'|\) for all \(s'\in\bbR^d\), the unit ball \(\UnitBall_\nu\) associated to the norm \(x\mapsto\nu_x(\lambda)\) always satisfies \(\UnitBall_\nu \supset \UnitBall\).
We now claim that this implies \(\lambda_{\sat}(s) > 0\) if and only if \(\sum_{\ell\geq 1} \ell \psi_0(\ell) < \infty\).
Indeed, suppose \(\lambda_{\sat}(s) > 0\).
Then, for small enough values of \(\lambda\), the boundaries of \(\UnitBall_\nu\) and \(\UnitBall\) coincide along the 4 circular arcs (including the points between the arcs and the facets).
But convexity of \(\UnitBall_\nu\) then implies that they must coincide everywhere, so that \(\lambda_{\sat}(s')>0\) in every direction \(s'\) pointing inside the facets.
But the latter can only occur if \(\sum_{\ell\geq 1} \ell \psi(\ell) < \infty\).
In particular, the case~\eqref{eq:ImpossibleGap} implies \(\lambda_{\sat}(s) = 0\).

\medskip
As long as we consider a two-dimensional setting, the first part of the above argument applies generally, that is, whenever quasi-isotropy fails.
The second part, however, makes crucial use of the fact that \(s\) is in the boundary of a facet of \(\partial\UnitBall\).
We don't know how to conclude the analysis when this is not the case.

\medskip
In higher dimensions, the situation is even less clear.

\begin{open}
	Provide a necessary and sufficient condition ensuring that \(\lambda_{\sat}(s)>0\) in a direction \(s\in\bbS^{d-1}\) in which \(\partial\UnitBall\) fails to be quasi-isotropic.
\end{open}


\section{Some basic properties}\label{sec:BasicProperties}


%
\subsection{Basic properties of the inverse correlation length} \label{sec:BasicPropICL}
%

A first observation is
\begin{claim}
	\label{claim:nu_exists}
	Suppose~\ref{hyp:sub_mult} holds.
	Then, \(\nu_s(\lambda)\) exists for any \(\lambda\in[0, \lambdac)\) and \(s\in\bbS^{d-1}\).
	Moreover
	\begin{equation}\label{eq:nu_infimum}
		G_{\lambda}(0,ns) \leq a_{\lambda}^{-1} e^{-\nu_s(\lambda) n}.
	\end{equation}
\end{claim}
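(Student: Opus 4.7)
The natural route is to recognize the statement as a textbook application of Fekete's subadditive lemma to a logarithmic transform of \(G_\lambda\), with the mild twist of passing from lattice directions to arbitrary directions \(s\in\bbS^{d-1}\).

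The first move is to encode sub-multiplicativity as subadditivity. I would set, for \(x\in\bbZ^d\),
\[
	\xi_\lambda(x) := -\log\bigl(a_\lambda\, G_\lambda(0,x)\bigr),
\]
with the convention \(\xi_\lambda(x)=+\infty\) if \(G_\lambda(0,x)=0\). Translation invariance of \(G_\lambda\) together with \ref{hyp:sub_mult} applied at the intermediate point \(z=x\) yields
\[
	\xi_\lambda(x+y) \leq \xi_\lambda(x) + \xi_\lambda(y) \qquad\forall x,y\in\bbZ^d.
\]
A preliminary remark is that \(\xi_\lambda(v)<\infty\) for enough \(v\)'s: \ref{hyp:J_path_lower_bnd} applied to the single-step path gives \(G_\lambda(0,v) \geq c_\lambda C_\lambda\, J_{0v}>0\) for every \(v\in\bbZ^d\setminus\{0\}\), so \(\xi_\lambda\) is everywhere finite and, in fact, \(\xi_\lambda(v)\leq |v| + O(1)\) along any lattice direction.

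Next, for each fixed \(v\in\bbZ^d\), the one-dimensional Fekete lemma applied to the subadditive sequence \(n\mapsto \xi_\lambda(nv)\) produces
\[
	\hat\nu(v) := \lim_{n\to\infty} \frac{\xi_\lambda(nv)}{n} = \inf_{n\geq 1} \frac{\xi_\lambda(nv)}{n}.
\]
The infimum characterization, combined with iterated subadditivity \(\xi_\lambda(nv)\leq n\xi_\lambda(v)\), gives the pointwise bound \(\hat\nu(v)\leq \xi_\lambda(v)\), i.e.\ \(G_\lambda(0,v)\leq a_\lambda^{-1} e^{-\hat\nu(v)}\) for all \(v\in\bbZ^d\). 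Using subadditivity once more to compare \(\hat\nu(qv)\) and \(q\hat\nu(v)\), the function \(\hat\nu\) extends unambiguously from \(\bbZ^d\) to \(\bbQ^d\) by positive homogeneity, inherits subadditivity there, and—being bounded above by \(C|\cdot|\)—extends by continuity to a convex, positively homogeneous function on \(\bbR^d\).

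The remaining point is to replace a lattice vector \(v\) with \([ns]\) for an arbitrary \(s\in\bbS^{d-1}\) in order to identify \(\nu_s(\lambda)=\hat\nu(s)\). The argument is the standard rational-approximation sandwich: given \(\epsilon>0\), pick \(v\in\bbZ^d\) and \(k\in\bbN\) with \(|s-v/k|<\epsilon\); write \(n=\lfloor n/k\rfloor k + r_n\) with \(r_n\in\{0,\dots,k-1\}\), so that \([ns] = \lfloor n/k\rfloor v + \delta_n\) with \(|\delta_n|\) bounded by a quantity of the form \(k+n\epsilon\). Subadditivity then sandwiches \(\xi_\lambda([ns])\) between \(\xi_\lambda(\lfloor n/k\rfloor v)\) plus a bounded error per unit length \(\epsilon\); dividing by \(n\), using Step 2 along \(v\) and letting \(\epsilon\to 0\), yields \(\xi_\lambda([ns])/n\to \hat\nu(s)\). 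The bound \(\eqref{eq:nu_infimum}\) itself is then the specialization to \(v=[ns]\) of the inequality \(G_\lambda(0,v)\leq a_\lambda^{-1}e^{-\hat\nu(v)}\) established above, combined with the homogeneity \(\hat\nu([ns])\geq n\nu_s(\lambda)\) coming from the infimum characterization applied along the sequence \([ns]\).

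The only non-routine step is the passage from \(v\in\bbZ^d\) to real directions, where the \(O(1)\) discrepancy between \(ns\) and \([ns]\) (and more generally between \([ns]\) and \(\lfloor n/k\rfloor v\)) must be absorbed. This is standard but requires a careful use of subadditivity; everything else reduces to Fekete's lemma applied to a function that \ref{hyp:sub_mult} was specifically designed to produce.
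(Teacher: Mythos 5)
Your strategy---transform $G_\lambda$ into a subadditive function via $\xi_\lambda(x)=-\log(a_\lambda G_\lambda(0,x))$, apply Fekete along lattice directions, extend by homogeneity, and sandwich to reach real directions---is precisely the ``simple variation of the classical subadditive argument'' the paper has in mind, so the approach matches.

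Two points deserve attention, though. First, you invoke~\ref{hyp:J_path_lower_bnd} to guarantee $\xi_\lambda<\infty$, but the claim assumes only~\ref{hyp:sub_mult}; nothing in the subadditive argument requires finiteness, as Fekete's lemma applies to $[-\infty,+\infty]$-valued subadditive sequences and the resulting $\nu_s(\lambda)$ is then simply allowed to be $+\infty$. Citing an extraneous hypothesis is harmless here but should be avoided.

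Second, and more substantively, the last step is not actually justified as written. You claim $\hat\nu([ns])\geq n\nu_s(\lambda)$ ``from the infimum characterization applied along the sequence $[ns]$,'' but the map $n\mapsto\xi_\lambda([ns])$ is \emph{not} of the form $n\mapsto\xi_\lambda(nv)$ for a fixed lattice vector, and it is only \emph{approximately} subadditive: $[(n+m)s]-[ns]-[ms]$ is a bounded but nonzero lattice vector, so one gets $\xi_\lambda([(n+m)s])\leq\xi_\lambda([ns])+\xi_\lambda([ms])+K$ with $K=\sup_{\|e\|_\infty\leq 1}\xi_\lambda(e)$. For such sequences Fekete gives $\nu_s(\lambda)=\inf_n\bigl(\xi_\lambda([ns])+K\bigr)/n$, which yields $\xi_\lambda([ns])\geq n\nu_s(\lambda)-K$ and hence
\[
	G_\lambda(0,ns)\leq a_\lambda^{-1}\,e^{K}\,e^{-n\nu_s(\lambda)} ,
\]
i.e.\ the bound with an extra multiplicative constant. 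Likewise, via the convex extension of $\hat\nu$, one only gets $\hat\nu([ns])\geq n\nu_s(\lambda)-\hat\nu(ns-[ns])$, with the subtracted term bounded but generically positive. So~\eqref{eq:nu_infimum} with exactly $a_\lambda^{-1}$, for arbitrary $s\in\bbS^{d-1}$, does not drop out of the argument you give; what does is the same inequality up to a bounded correction accounting for the rounding of $ns$. You flag the discrepancy yourself (``must be absorbed''), but then assert the exact homogeneity inequality without providing the absorption; that is the one real gap in an otherwise correct proof.
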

The proof is omitted, as it is a simple variation of the classical subadditive argument.

\begin{claim}
	\label{claim:nu_norm}
	Suppose~\ref{hyp:sub_mult} holds. For \(\lambda<\lambda_{\exp}\), the function on \(\bbR^d\) defined by \(\nu_x(\lambda) = \norm{x}\cdot \nu_{x/\norm{x}}(\lambda)\) when \(x\neq 0\) and \(\nu_0(\lambda)=0\) is convex and defines a norm on \(\bbR^d\).
\end{claim}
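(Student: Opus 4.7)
The plan is to verify the three norm axioms and then deduce convexity as a consequence. Throughout, I will use the translation invariance $G_\lambda(x,y) = G_\lambda(0,y-x)$ (which is built in via $J_{ij}=J_{i-j}$) as well as the invariance of $G_\lambda$ under the lattice symmetries (inherited from those of $J$).

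\smallskip
\emph{Non-negativity and symmetry.} Non-negativity of $\nu_s(\lambda)$ is immediate from Claim~\ref{claim:nu_exists} together with~\ref{hyp:G_Bounded_Pos_nu_pos}: the bound $G_\lambda(0,ns)\le a_\lambda^{-1} e^{-\nu_s(\lambda)n}$ combined with $\sup_x G_\lambda(0,x)<\infty$ forces $\nu_s(\lambda)\ge 0$. The identity $\nu_{-s}(\lambda)=\nu_s(\lambda)$ is inherited from the symmetry $x\mapsto -x$ of $J$. Positive homogeneity on rays is built into the definition, and extending it to negative scalars uses the symmetry just mentioned. Hence $\nu_{\alpha x}(\lambda)=|\alpha|\,\nu_x(\lambda)$ for all $\alpha\in\bbR$.

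\smallskip
\emph{Subadditivity (main step).} I want to show $\nu_{x+y}(\lambda)\le \nu_x(\lambda)+\nu_y(\lambda)$ for all $x,y\in\bbR^d$. Equivalently, I will prove
\[
    \nu_x(\lambda) = -\lim_{n\to\infty}\frac1n \log G_\lambda(0,[nx]),
\]
for every $x\ne 0$ (immediate from the definition together with the convention $[\cdot]$), and then combine this with~\ref{hyp:sub_mult} and translation invariance. Applying the sub-multiplicative bound at the intermediate point $[nx]$ yields
\[
    G_\lambda\bigl(0,[n(x+y)]\bigr)
    \;\ge\; a_\lambda\, G_\lambda\bigl(0,[nx]\bigr)\,G_\lambda\bigl(0,[n(x+y)]-[nx]\bigr).
\]
The vector $v_n := [n(x+y)]-[nx]-[ny]$ is bounded uniformly in $n$, so another application of~\ref{hyp:sub_mult} (together with the fact that $G_\lambda(0,z)$ is bounded below by a strictly positive constant on any finite set of lattice points, via~\ref{hyp:G_Bounded_Pos_nu_pos} and positivity of $G_\lambda$) gives
\[
    G_\lambda\bigl(0,[n(x+y)]-[nx]\bigr)
    \;\ge\; a_\lambda\, G_\lambda\bigl(0,[ny]\bigr)\, G_\lambda(0,v_n)
    \;\ge\; c\, G_\lambda\bigl(0,[ny]\bigr),
\]
for some constant $c>0$ independent of $n$. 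Taking $-\tfrac1n\log$ and sending $n\to\infty$ produces the desired triangle inequality. The only delicate point is ensuring that $G_\lambda(0,v_n)$ is bounded away from zero, which I expect to be the main (but minor) obstacle; it is handled by the finiteness of the range of $v_n$ and the strict positivity of $G_\lambda$ on $\bbZ^d$.

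\smallskip
\emph{Positive definiteness and convexity.} For $\lambda<\lambda_{\exp}$, the definition~\eqref{eq:lambdaqlr_def} together with the remark that the infimum over $s$ is not required (Lemma~\ref{lem:rate_equiv_directions}) yields $\nu_s(\lambda)>0$ uniformly in $s\in\bbS^{d-1}$, hence $\nu_x(\lambda)>0$ for every $x\ne 0$. Finally, convexity is automatic: any positively homogeneous, subadditive function on $\bbR^d$ is convex, since for $t\in[0,1]$,
\[
    \nu_{tx+(1-t)y}(\lambda) \le \nu_{tx}(\lambda)+\nu_{(1-t)y}(\lambda) = t\,\nu_x(\lambda)+(1-t)\,\nu_y(\lambda).
\]
This completes all items required for $\nu_\cdot(\lambda)$ to be a norm on $\bbR^d$.
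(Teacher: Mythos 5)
Your argument is the standard one that the paper alludes to by omitting the proof: symmetry and homogeneity are built in, subadditivity is extracted from~\ref{hyp:sub_mult} together with translation invariance (with the lattice rounding absorbed in a bounded vector $v_n$), positive definiteness comes from $\lambda < \lambda_{\exp}$, and convexity follows mechanically from homogeneity and subadditivity. So the route is correct and matches what the paper calls ``a standard consequence of Assumption~\ref{hyp:sub_mult}.''

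The one genuine loose end is the step you yourself flag as the delicate point: the assertion that $G_\lambda(0,v_n)$ is bounded below by a positive constant. You invoke ``strict positivity of $G_\lambda$ on $\bbZ^d$,'' but this does not follow from~\ref{hyp:sub_mult} alone, nor from~\ref{hyp:G_Bounded_Pos_nu_pos}, which only gives $G_\lambda \geq 0$. For the models in the paper strict positivity does hold, via~\ref{hyp:J_path_lower_bnd} and the standing assumption $\psi>0$ (which give $G_\lambda(0,z)\geq c_\lambda C_\lambda J_{0z}>0$); and the same strict positivity is already tacitly required for the subadditive argument behind Claim~\ref{claim:nu_exists} to be well posed, so this is best viewed as a background hypothesis of the framework rather than a defect peculiar to your proof — but it deserves an explicit pointer rather than a bare assertion. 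The other point you describe as ``immediate,'' namely $\nu_x(\lambda) = -\lim_n n^{-1}\log G_\lambda(0,[nx])$ for general $x\in\bbR^d$, likewise needs a (short) argument comparing $[nx]$ to integer multiples of $[x/\norm{x}]$, again using~\ref{hyp:sub_mult}, translation invariance, and the same local positivity; it is standard but not literally immediate from the definition.
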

Again, the proof is omitted, as it is a standard consequence of Assumption~\ref{hyp:sub_mult}.
Our third and fourth (trivial) observations are

\begin{claim}
	\label{claim:nu_monotone}
	Suppose~\ref{hyp:left_cont} holds.
	Then, for any \(s\in \bbS^{d-1}\), any \(x, y\in\bbZ^d\) and any \(0 \leq \lambda \leq \lambda' < \lambdac\),
	\begin{equation}\label{eq:nu_monotonicity}
		G_{\lambda}(x,y) \leq G_{\lambda'}(x,y)
		\quad\text{ and }\quad
		\nu_s(\lambda) \geq \nu_s(\lambda').
	\end{equation}
\end{claim}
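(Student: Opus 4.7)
The plan is to observe that both assertions are immediate consequences of hypothesis~\ref{hyp:left_cont} together with the very definition of the inverse correlation length, so the claim requires essentially no work.

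First, the monotonicity of $\lambda\mapsto G_\lambda(x,y)$ is one half of assumption~\ref{hyp:left_cont}: for every fixed pair $x,y\in\bbZ^d$, the function $\lambda\mapsto G_\lambda(x,y)$ is explicitly assumed to be non-decreasing on $[0,\lambdac)$. Thus, for $0\leq\lambda\leq\lambda'<\lambdac$, the inequality $G_\lambda(x,y)\leq G_{\lambda'}(x,y)$ is nothing more than a restatement of that assumption.

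Second, I would specialize this to the pair $(0,ns)$ for $s\in\bbS^{d-1}$ and $n\in\bbN$, which yields $G_\lambda(0,ns)\leq G_{\lambda'}(0,ns)$ for every $n$. Taking logarithms (both sides being strictly positive, for instance by~\ref{hyp:J_path_lower_bnd}) and multiplying by $-1/n$ reverses the inequality, giving
\[
-\frac{1}{n}\log G_\lambda(0,ns)\geq -\frac{1}{n}\log G_{\lambda'}(0,ns).
\]
Passing to the limit $n\to\infty$ (the limit exists by Claim~\ref{claim:nu_exists}, which applies thanks to~\ref{hyp:sub_mult}) preserves the inequality and yields $\nu_s(\lambda)\geq\nu_s(\lambda')$.

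There is no real obstacle: the argument is a one-line consequence of~\ref{hyp:left_cont} and the monotonicity of $-\log$, and no further input (convexity, subadditivity beyond what is already used in Claim~\ref{claim:nu_exists}, etc.) is needed. This is why the authors omit the proof and label the observation trivial.
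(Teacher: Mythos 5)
Your argument is correct and is exactly the routine verification the paper leaves implicit (the claim is stated without proof, being labeled a trivial observation). The monotonicity of $G$ is read off directly from~\ref{hyp:left_cont}, and applying $-\tfrac{1}{n}\log(\cdot)$ and passing to the limit reverses the inequality; no further input is needed.
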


\begin{claim}
	\label{claim:nu_trivialUB}
	Let \(s\in\bbS^{d-1}\). Suppose \(\nu_s(\lambda)\) is well defined and that~\ref{hyp:J_path_lower_bnd} holds.
	Then, \(\nu_s\leq |s|\).
\end{claim}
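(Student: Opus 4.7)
The plan is to use assumption~\ref{hyp:J_path_lower_bnd} with the simplest possible choice of path collection, namely the single one-step walk going directly from $0$ to $[ns]$. Concretely, taking $\Gamma = \{\gamma\}$ with $\gamma = (0,[ns])$, assumption~\ref{hyp:J_path_lower_bnd} yields
\[
    G_\lambda(0,ns) \;\geq\; c_\lambda \, C_\lambda \, J_{0,[ns]} \;=\; c_\lambda\, C_\lambda\, \psi([ns])\, e^{-|[ns]|}.
\]
This is the only input needed: a direct single-step lower bound by the interaction.

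Taking logarithms, dividing by $n$, and passing to the limit,
\[
    \nu_s(\lambda) \;=\; -\lim_{n\to\infty}\frac{1}{n}\log G_\lambda(0,ns) \;\leq\; \lim_{n\to\infty}\Bigl( \frac{|[ns]|}{n} - \frac{\log\psi([ns])}{n} - \frac{\log(c_\lambda C_\lambda)}{n} \Bigr).
\]
The last two terms vanish: the constants $c_\lambda, C_\lambda$ are independent of $n$, and the subexponential assumption~\eqref{eq:psi_subexp} on $\psi$ gives $\frac{1}{n}\log\psi([ns]) \to 0$ as $n\to\infty$. Meanwhile $|[ns]|/n \to |s|$ by continuity of the norm $|\cdot|$ (the lattice rounding $[\,\cdot\,]$ introduces only an $O(1)$ Euclidean error). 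Combining these yields $\nu_s(\lambda) \leq |s|$, which is what we want.

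There is essentially no obstacle here — the statement is a direct one-step unfolding of~\ref{hyp:J_path_lower_bnd} combined with the definition of $J$ and the subexponential assumption on $\psi$. The only minor point to verify is that the rounding $[ns] \in \bbZ^d$ does not affect the asymptotic: this is handled by the fact that $|[ns]-ns|$ is uniformly bounded, so that $|[ns]|/n \to |s|$ and $\psi([ns])$ remains subexponential in $n$ (using the convention stated in Section 1.2 that $ns$ is interpreted as $[ns]$ when treated as a lattice point).
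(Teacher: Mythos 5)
Your proof is correct and is precisely the argument the paper has in mind: the paper labels this a "trivial observation," omits a formal proof, and indicates the idea in Section~\ref{sec:RemOZ} via the one-step bound \(G_{\lambda}(0,x)\geq C J_{0x}\). Your unfolding — taking \(\Gamma=\{(0,[ns])\}\) in~\ref{hyp:J_path_lower_bnd}, then using~\eqref{eq:psi_subexp} and the continuity of \(|\cdot|\) to dispatch the prefactor and lattice rounding — is exactly the intended reasoning.
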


Finally, we look at the behavior of \(\nu\) when \(\lambda\searrow 0\).
\begin{lemma}
	\label{lem:lambda_equal_zero}
	Suppose~\ref{hyp:weak_SL} and~\ref{hyp:J_path_lower_bnd} hold. Then, for any \(s\in\bbS^{d-1}\), \(\lim_{\lambda\searrow 0} \nu_{s}(\lambda) = |s|\).
\end{lemma}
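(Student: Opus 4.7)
The upper bound \(\limsup_{\lambda \searrow 0} \nu_s(\lambda) \leq |s|\) is immediate from Claim~\ref{claim:nu_trivialUB}, which already uses~\ref{hyp:J_path_lower_bnd}; the content of the lemma is therefore the matching lower bound \(\liminf_{\lambda \searrow 0} \nu_s(\lambda) \geq |s|\).

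My plan is to reduce to the KRW by invoking~\ref{hyp:weak_SL}: for some \(\alpha \geq 0\) independent of \(\lambda\), one has \(G_\lambda(0, ns) \leq C(\lambda)\, G^{\KRW}_{\alpha \lambda}(0, ns)\), which gives \(\nu_s(\lambda) \geq \nu_s^{\KRW}(\alpha \lambda)\). It thus suffices to prove \(\lim_{\lambda \searrow 0} \nu_s^{\KRW}(\lambda) = |s|\). For the KRW I would use an exponential tilt in a direction strictly inside \(\Wulff\). Fix a dual vector \(t \in \partial\Wulff\) with \(t \cdot s = |s|\) and \(\epsilon \in (0,1)\), and set
\[
\hat J_\epsilon(y) = J_{0y}\, e^{(1-\epsilon)\, t \cdot y} = \psi(y)\, e^{-|y| + (1-\epsilon)\, t \cdot y}.
\]
Since \(t \in \Wulff\) yields \(t \cdot y \leq |y|\), we obtain \(\hat J_\epsilon(y) \leq \psi(y)\, e^{-\epsilon |y|}\); the sub-exponentiality~\eqref{eq:psi_subexp} of \(\psi\) then ensures \(S_\epsilon := \sum_{y\neq 0} \hat J_\epsilon(y) < \infty\). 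A direct convolution identity gives \(\hat J_\epsilon^{*k}(x) = e^{(1-\epsilon)\, t \cdot x}\, J^{*k}(x)\), combined with the trivial bound \(\hat J_\epsilon^{*k}(x) \leq S_\epsilon^k\), so that
\[
J^{*k}(ns) \leq S_\epsilon^k\, e^{-(1-\epsilon)\, n |s|},
\]
using \(t \cdot (ns) = n|s|\). Summing the geometric series over \(k \geq 1\) in the KRW path expansion yields
\[
G^{\KRW}_\lambda(0, ns) \leq e^{-(1-\epsilon)\, n |s|}\, \frac{\lambda S_\epsilon}{1 - \lambda S_\epsilon}
\]
whenever \(\lambda < 1/S_\epsilon\), and therefore \(\nu_s^{\KRW}(\lambda) \geq (1-\epsilon)|s|\) in that range. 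Letting first \(\lambda \searrow 0\) and then \(\epsilon \searrow 0\) finishes the reduction.

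The only subtle choice is to tilt by \((1-\epsilon)\, t\) rather than by \(t\) itself. A tilt by the full dual vector \(t\) would reduce summability of the tilted weight to that of \(\psi(y)\, e^{-\surcharge_t(y)}\), which is exactly the quantity \(\tilde \Xi(|\cdot|, \psi, t)\) that \emph{fails} to be finite in general---indeed, its finiteness is the dichotomy criterion of Theorem~\ref{thm:main}. The slack \(\epsilon|y|\) absorbed into the prefactor via~\eqref{eq:psi_subexp} is what makes the tilt feasible regardless of whether \(\tilde \Xi\) is finite, and is the main (mild) obstacle one needs to navigate.
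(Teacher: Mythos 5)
Your proof is correct and follows essentially the same route as the paper's: reduce to \(\KRW\) via~\ref{hyp:weak_SL}, then extract the factor \(e^{-(1-\epsilon)n|s|}\) while leaving a remainder \(\psi(y)e^{-\epsilon|y|}\) that is summable by~\eqref{eq:psi_subexp}. The only cosmetic difference is the mechanism used to produce \(e^{-(1-\epsilon)n|s|}\): the paper splits \(e^{-|y_i|}=e^{-(1-\epsilon)|y_i|}e^{-\epsilon|y_i|}\) and invokes the triangle inequality \(\sum_i|y_i|\geq n|s|\), whereas you tilt by \((1-\epsilon)t\) and use the exact linear identity \(\sum_i t\cdot y_i = n|s|\) together with \(t\cdot y\leq|y|\); both devices land on the same summable quantity and the same threshold for \(\lambda\).
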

\begin{proof}
	Fix \(s\in\bbS^{d-1}\).
	By~\ref{hyp:J_path_lower_bnd}, \(\nu_s\leq |s|\).
	Let \(\alpha\) be given by~\ref{hyp:weak_SL}.
	Fix any \(\epsilon>0\).
	Then, let \(\lambda<\bigl( \alpha\sum_{y\neq 0} \psi(y)e^{-\epsilon|y|} \bigr)^{-1}\).
	We claim that \(G_{\lambda}(0,ns)\leq c(\lambda,\epsilon) e^{-(1-\epsilon)n|s|}\) which gives the desired claim.
	Indeed,
	\begin{align*}
		G_{\lambda}(0,ns)
		&\leq
		CG_{\alpha\lambda}^{\KRW}(0,ns) \\
		&=
		C\sum_{k\geq 1} \sum_{\substack{y_1,\dots,y_k\neq 0\\ \sum y_i=ns}} \prod_{i=1}^{k} \alpha\lambda \psi(y_i)e^{-|y_i|}\\
		&\leq
		Ce^{-(1-\epsilon)n|s|}\sum_{k\geq 1} \sum_{\substack{y_1,\dots,y_k\neq 0\\ \sum y_i=ns}} \prod_{i=1}^{k} \alpha\lambda \psi(y_i)e^{-\epsilon|y_i|}\\
		&\leq
		Ce^{-(1-\epsilon)n|s|}\sum_{k\geq 1} \Big(\lambda \sum_{y\neq 0} \alpha \psi(y)e^{-\epsilon|y|}\Big)^{\!k}.
		\qedhere
	\end{align*}
\end{proof}

%
\subsection{Weak equivalence of directions}
%

Let us introduce
\begin{equation}
	\nu_+(\lambda) = \max_{s\in\bbS^{d-1}} \nu_{s}(\lambda)
	\quad\text{ and }\quad
	\nu_-(\lambda) = \min_{s\in\bbS^{d-1}} \nu_{s}(\lambda) .
\end{equation}
The existence of these quantities follows from the fact that \(s\mapsto \nu_s(\lambda)\) is continuous (indeed, it is the restriction of a norm on \(\bbR^d\) to the set \(\bbS^{d-1}\)).

\begin{lemma}
	\label{lem:rate_equiv_directions}
	Suppose~\ref{hyp:sub_mult} holds.
	Then, \(d\cdot\nu_-(\lambda)\geq \nu_+(\lambda) \geq \nu_-(\lambda)\).
\end{lemma}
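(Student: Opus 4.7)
The inequality $\nu_+(\lambda) \geq \nu_-(\lambda)$ is immediate from the definitions, so the task reduces to proving $\nu_+(\lambda) \leq d \nu_-(\lambda)$. The plan is to leverage the two pieces of structure that have already been established: the fact that $\nu(\cdot) = \nu_{\cdot}(\lambda)$ extends to a genuine norm on $\mathbb{R}^d$ (Claim~\ref{claim:nu_norm}), and that this norm is invariant under the hyperoctahedral symmetry group $G$ (sign changes and coordinate permutations), which follows from the assumed lattice-symmetry of $\psi$ and $|\cdot|$ and hence of $G_\lambda$. The strategy is to sandwich $\nu(e_1)$ between $\nu_-$ and $\nu_+$ by two separate applications of the triangle inequality, each contributing a factor $\sqrt{d}$.

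For the upper half, I would expand an arbitrary $y\in\mathbb{R}^d$ along the coordinate axes, $y=\sum_i y_i e_i$, and use sub-additivity of the norm together with $G$-invariance (which forces $\nu(e_i)=\nu(e_1)$ for all $i$) to get
\[
	\nu(y) \;\leq\; \sum_{i=1}^d |y_i|\,\nu(e_i) \;=\; \nu(e_1)\,\|y\|_1 \;\leq\; \sqrt{d}\,\nu(e_1)\,\|y\|_2
\]
by Cauchy--Schwarz. Maximising over $y\in\bbS^{d-1}$ gives $\nu_+(\lambda) \leq \sqrt{d}\,\nu(e_1)$.

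For the lower half, I would pick $s_-\in\bbS^{d-1}$ with $\nu(s_-)=\nu_-(\lambda)$, and use $G$-invariance to rotate/reflect $s_-$ into the fundamental chamber: without loss of generality $s_-=(a_1,\dots,a_d)$ with $a_1=\max_i|a_i|\geq 1/\sqrt{d}$. Letting $s_-'$ denote the $G$-image of $s_-$ obtained by flipping the sign of every coordinate except the first, one has $\nu(s_-')=\nu_-(\lambda)$ and $s_- + s_-' = 2a_1 e_1$, so
\[
	\nu(e_1) \;=\; \frac{1}{2a_1}\,\nu(s_-+s_-') \;\leq\; \frac{1}{2a_1}\bigl(\nu(s_-)+\nu(s_-')\bigr) \;=\; \frac{\nu_-(\lambda)}{a_1} \;\leq\; \sqrt{d}\,\nu_-(\lambda).
\]
Chaining the two bounds then gives $\nu_+(\lambda)\leq \sqrt{d}\cdot\sqrt{d}\,\nu_-(\lambda)=d\,\nu_-(\lambda)$.

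I do not anticipate any real obstacle: the argument is purely a convex-geometric fact about $G$-invariant norms (the ratio of circumradius to inradius of a hyperoctahedrally symmetric convex body is at most $d$) and uses nothing about the models beyond what Claim~\ref{claim:nu_norm} already provides. The only point that requires minimal care is the legitimacy of the "alignment" $a_1\geq 1/\sqrt{d}$, which is justified by $G$-invariance of $\nu$ (so replacing $s_-$ by any of its $G$-images does not change the value of $\nu$).
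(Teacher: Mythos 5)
Your proof is correct, and it takes a genuinely different route from the paper's.

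The paper's proof works directly at the level of the two-point function: it takes $s^*$ achieving the minimum, notes that all $\pi/2$-rotations of $s^*$ around coordinate axes also achieve the minimum, builds a basis $s_1^*,\dots,s_d^*$ of such rotated copies, decomposes an arbitrary $s$ as $\sum_i\alpha_i s_i^*$ with $0\leq\alpha_i\leq 1$, and then applies the sub-multiplicativity inequality~\ref{hyp:sub_mult} $d$ times to bound $-\log G_\lambda(0,ns)$. You instead filter everything through the extension of $\nu$ to a (semi)norm on $\bbR^d$ (Claim~\ref{claim:nu_norm}), decompose along the coordinate axes, use Cauchy--Schwarz to get $\nu_+\leq\sqrt d\,\nu(e_1)$, and then a symmetrization trick (flipping all but the first coordinate of a minimizer and adding) to get $\nu(e_1)\leq\sqrt d\,\nu_-$. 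The abstraction buys cleanliness: your decomposition is more explicit and avoids the slightly delicate (and unjustified in the paper) claim that the $\alpha_i$ can always be taken in $[0,1]$ in that rotated basis; on the other hand, the paper's argument is more self-contained in that it does not route through Claim~\ref{claim:nu_norm}, and makes transparent that only sub-multiplicativity and lattice symmetry are used. Two small remarks: (i) what you need from Claim~\ref{claim:nu_norm} is only subadditivity and homogeneity, which hold for all $\lambda<\lambdac$, not positive-definiteness (which requires $\lambda<\lambdaqlr$), so you should say ``seminorm'' rather than ``genuine norm'' to keep the lemma valid on the full stated range; (ii) your invocation of $G$-invariance of $\nu$ is not part of~\ref{hyp:sub_mult} itself, but follows from the paper's standing assumption that $\psi$ and $|\cdot|$ are lattice-symmetric — the same implicit assumption the paper's proof also relies on.
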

\begin{proof}
	The second inequality holds by definition.
	To obtain the first one, set \(s^*\) to be a direction realizing the minimum.
	By lattice symmetries, all its \(\pi/2\) rotations around a coordinate axis also achieve the minimum.
	For a fixed direction \(s\), denote by \(s^*_1, \dots, s^*_d\) a basis of \(\bbR^d\) constituted of rotated versions of \(s^*\) such that \(s = \sum_{i=1}^d \alpha_i s^*_i\) with \(1 \geq \alpha_i \geq 0\).
	Then, for any \(n\), \(n s = \sum_{i=1}^d n\alpha_i s_i^*\).
	So (integer parts are implicitly taken), by~\ref{hyp:sub_mult},
	\begin{equation}
		-\log G_{\lambda}(0,ns)
		\leq
		- \sum_{i=1}^d \log G_{\lambda}(0, n\alpha_i s_i^*) - d\log(a_{\lambda})
		= \sum_{i=1}^d n\alpha_i\nu_-(1+\sfo_n(1)).
	\end{equation}
	In particular, \(\lim_{n\to\infty} -\log G_{\lambda}(0, ns)/n\leq d\cdot\nu_-\).
\end{proof}

%
\subsection{Left-continuity of \(\lambda\mapsto\nu_{s}(\lambda)\)}
%

\begin{lemma}
	\label{lem:nu_left_cont}
	Suppose~\ref{hyp:sub_mult} and~\ref{hyp:left_cont} hold. Let \(s\in \bbS^{d-1}\). Let \(\lambda'\in (0,\lambdac]\) be such that
	\begin{itemize}
		\item \(G_{\lambda'}\) is well defined.
		\item There exists \(\delta>0\) such that \(\inf_{\lambda\in(\lambda'-\delta,\lambda']}a_{\lambda}>0\) (where \(a_{\lambda}\) is given by~\ref{hyp:sub_mult}).
	\end{itemize}
	Then, the function \(\lambda\mapsto \nu_s(\lambda)\) is left-continuous at \(\lambda'\).
\end{lemma}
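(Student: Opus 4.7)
The plan is to use the explicit upper bound from Claim~\ref{claim:nu_exists} together with the pointwise left-continuity of $G_\lambda(0,ns)$ furnished by~\ref{hyp:left_cont} and the uniform positivity of $a_\lambda$ near $\lambda'$.

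First, by Claim~\ref{claim:nu_monotone}, $\lambda\mapsto \nu_s(\lambda)$ is non-increasing on $[0,\lambdac)$ (and by hypothesis the bound extends up to $\lambda'$), so monotonicity immediately gives
\[
\liminf_{\lambda\uparrow \lambda'} \nu_s(\lambda) \geq \nu_s(\lambda').
\]
Hence it suffices to establish the reverse inequality for the $\limsup$.

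For this, invoke Claim~\ref{claim:nu_exists}: for each $\lambda\in [0,\lambdac)$ (and, by the hypothesis on $G_{\lambda'}$ and~\ref{hyp:sub_mult}, also for $\lambda=\lambda'$ if $\lambda'=\lambdac$) and every $n\geq 1$,
\[
\nu_s(\lambda) \leq -\frac{1}{n}\log a_\lambda - \frac{1}{n}\log G_\lambda(0,ns).
\]
Set $a_* = \inf_{\lambda\in(\lambda'-\delta,\lambda']} a_\lambda > 0$ and restrict to $\lambda\in(\lambda'-\delta,\lambda']$. Then $-\log a_\lambda \leq -\log a_*$ uniformly. By~\ref{hyp:left_cont}, for every fixed $n$, $G_\lambda(0,ns)\uparrow G_{\lambda'}(0,ns)$ as $\lambda\uparrow \lambda'$, and $G_{\lambda'}(0,ns)>0$ (this positivity follows from~\ref{hyp:sub_mult} since $G_{\lambda'}(0,ns)\geq a_{\lambda'} G_{\lambda'}(0,0)G_{\lambda'}(0,ns)$ is consistent only if things are strictly positive; more concretely, for each of the models considered, $G_{\lambda'}(0,x)>0$ for all $x$ as soon as it is well defined and $\lambda'>0$). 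Passing to the $\limsup$ as $\lambda\uparrow \lambda'$ in the displayed bound yields, for every $n$,
\[
\limsup_{\lambda\uparrow \lambda'} \nu_s(\lambda)
\leq -\frac{1}{n}\log a_* - \frac{1}{n}\log G_{\lambda'}(0,ns).
\]

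Finally, let $n\to\infty$: the first term tends to $0$ and, by the definition of $\nu_s(\lambda')$ (or Claim~\ref{claim:nu_exists} applied at $\lambda'$), $-\frac{1}{n}\log G_{\lambda'}(0,ns) \to \nu_s(\lambda')$. This produces $\limsup_{\lambda\uparrow \lambda'} \nu_s(\lambda)\leq \nu_s(\lambda')$, completing the proof.

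The only genuine obstacle is ensuring the two hypotheses ($G_{\lambda'}$ well defined and $\inf a_\lambda>0$ in a left neighborhood) are used precisely at the two places above — the monotone convergence $G_\lambda(0,ns)\uparrow G_{\lambda'}(0,ns)$ needs $G_{\lambda'}$ to make sense as a finite quantity, while the uniform lower bound on $a_\lambda$ is what allows us to control the prefactor $-\tfrac1n\log a_\lambda$ uniformly in $\lambda$ before sending $n\to\infty$. Both are handled directly by the stated assumptions.
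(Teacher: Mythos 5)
Your proof is correct and uses essentially the same approach as the paper: both hinge on the approximate-subadditivity bound $\nu_s(\lambda)\leq -\tfrac1n\log a_\lambda -\tfrac1n\log G_\lambda(0,ns)$ (from~\ref{hyp:sub_mult} via Claim~\ref{claim:nu_exists}), the uniform control $\inf_{\lambda\in(\lambda'-\delta,\lambda']}a_\lambda>0$, the pointwise left-continuity of $G_\lambda(0,ns)$, and then sending $n\to\infty$. You phrase the final step via $\limsup$/$\liminf$ while the paper uses an $\epsilon/3$ argument, but the underlying mechanism is identical.
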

\begin{proof}
	Fix \(\lambda'\in (0,\lambdac]\) such that \(G_{\lambda'}\) is well defined and \(s\in\bbS^{d-1}\). Let \(\delta\) be given by our hypotheses and let \(I=(\lambda'-\delta,\lambda']\), and \(C=-\log(\inf_{\lambda\in I}a_{\lambda})\). Set
	\begin{equation*}
		f_n(\lambda) = -\log G_{\lambda}(0,ns).
	\end{equation*}
	Then, for any \(\lambda\in I\) and \(n,m\in\bbZ_{>0}\), \(f_{n+m}(\lambda)\leq f_{n}(\lambda)+f_{m}(\lambda)+C\). In particular, for any \(n\geq 1\) and any \(\lambda\in I\),
	\begin{equation*}
		\nu_s(\lambda)=\lim_{q\to\infty} \frac{f_{qn}(\lambda)}{qn} \leq \frac{f_n(\lambda)}{n} + \frac{C}{n}.
	\end{equation*}
	Fix \(\epsilon>0\). Choose \(n_0\) such that \(C/n_0<\epsilon/3\) and \(\abs{\frac{f_{n_0}(\lambda')}{n_0}-\nu_s(\lambda')}\leq \epsilon/3\). By left-continuity of \(G_{\lambda}(0,n_0s)\) at \(\lambda'\), one can choose \(\epsilon'_0>0\) such that
	\begin{equation*}
		\abs{\frac{f_{n_0}(\lambda'-\epsilon')}{n_0} - \frac{f_{n_0}(\lambda')}{n_0}}\leq \epsilon/3
	\end{equation*}for any \(\epsilon'<\epsilon'_0\). In particular, for any \(\epsilon'<\epsilon'_0\),
	\begin{align*}
		0\leq \nu_{s}(\lambda'-\epsilon')-\nu_s(\lambda')
		&
		\leq \frac{f_{n_0}(\lambda'-\epsilon')}{n_0} + \frac{C}{n_0} - \nu_s(\lambda')\\
		&
		\leq  \abs{\frac{f_{n_0}(\lambda'-\epsilon')}{n_0} - \frac{f_{n_0}(\lambda')}{n_0}} + \epsilon/3 + \abs{\frac{f_{n_0}(\lambda')}{n_0} - \nu_s(\lambda')}\\
		&
		\leq \epsilon,
	\end{align*}
	where we used~\eqref{eq:nu_monotonicity} in the first line.
\end{proof}


\section{``Summable'' case}


In this section, we consider directions \(s\in\bbS^{d-1}\) for which
\begin{equation}\label{eq:SummabilityCondition}
	\sum_{y\neq 0} \psi(y) e^{-\surcharge_t(y)} < \infty,
\end{equation}
where \(t\) is any vector dual to \(s\).
In this case, we first prove that saturation occurs in direction \(s\) at small enough values of \(\lambda\), whenever the model at hand satisfies~\ref{hyp:weak_SL}.
Then, we complement this result by showing, in some models, that saturation does not occur for values of \(\lambda\) close enough to \(\lambdaqlr\).

%
\subsection{Saturation at small \(\lambda\)}
%

\begin{lemma}\label{lem:SaturationKRW}
	Let \(s\in\bbS^{d-1}\) and fix some vector \(t\) dual to \(s\). Assume that~\eqref{eq:SummabilityCondition} holds.
	Then, one can define \(0<\tilde{\lambda}\equiv \tilde{\lambda}^{\KRW}\leq \lambdac\) (given by~\eqref{eq:lambda_tilde_KRW}) such that, for any \(\lambda \in (0, \tilde{\lambda})\), \(\nu_s^{\KRW}(\lambda) = |s|\).
	Moreover, when \(d=1\), \(\tilde{\lambda}^{\KRW} = \lambda_{\sat}^{\KRW}\).
\end{lemma}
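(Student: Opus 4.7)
The plan rests on a simple tilt identity. Fix $s\in\bbS^{d-1}$ and a vector $t$ dual to $s$. Since $|y|=\surcharge_t(y)+y\cdot t$, for any walk $\gamma\in\walk(0,ns)$ the telescoping $\sum_{i=1}^{|\gamma|}(\gamma_i-\gamma_{i-1})\cdot t = ns\cdot t = n|s|$ yields
\begin{equation*}
\prod_{i=1}^{|\gamma|} J_{\gamma_{i-1}\gamma_i}
= e^{-n|s|}\prod_{i=1}^{|\gamma|}\psi(\gamma_i-\gamma_{i-1})\,e^{-\surcharge_t(\gamma_i-\gamma_{i-1})}.
\end{equation*}
Summing over $\walk(0,ns)$ and then relaxing the endpoint constraint to allow arbitrary non-zero increments, I obtain the upper bound
\begin{equation*}
G_\lambda^{\KRW}(0,ns) \leq e^{-n|s|}\sum_{k\geq 1}\bigl(\lambda\,\tilde\Xi(|\cdot|,\psi,t)\bigr)^{k},
\end{equation*}
which is finite as soon as $\lambda\,\tilde\Xi<1$. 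This motivates the definition
\begin{equation}\label{eq:lambda_tilde_KRW}
\tilde\lambda^{\KRW} \defby \min\bigl(\lambdac^{\KRW},\ 1/\tilde\Xi(|\cdot|,\psi,t)\bigr),
\end{equation}
which is strictly positive by the summability assumption~\eqref{eq:SummabilityCondition}. For $\lambda<\tilde\lambda^{\KRW}$ the above estimate gives $\nu_s^{\KRW}(\lambda)\geq |s|$, while Claim~\ref{claim:nu_trivialUB} provides the matching inequality $\nu_s^{\KRW}(\lambda)\leq |s|$, proving the first part of the lemma.

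For the sharp identification $\tilde\lambda^{\KRW}=\lambda_{\sat}^{\KRW}$ in $d=1$, I will exploit the renewal structure of the KRW. Treating the direction $s=+1$ (the direction $s=-1$ is symmetric), one has $t=1$ and $\tilde\Xi = \sum_{y\neq 0} J_y\, e^{y}$. From the convolution representation $G_\lambda^{\KRW}=\sum_{k\geq 1}\lambda^k J^{*k}$ one reads off, on the annulus of absolute convergence,
\begin{equation*}
\sum_{n\in\bbZ} G_\lambda^{\KRW}(0,n)\,z^n = \frac{\lambda\hat J(z)}{1-\lambda\hat J(z)},
\qquad \hat J(z)\defby\sum_{y\neq 0} J_y\, z^y.
\end{equation*}
The function $z\mapsto\hat J(z)$ is continuous and strictly increasing on $(0,e)$ with $\hat J(1)=\bar J=1$ and $\hat J(z)\nearrow\tilde\Xi$ as $z\nearrow e$. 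When $\lambda>1/\tilde\Xi$, the intermediate value theorem produces a unique $z^\ast\in(1,e)$ with $\lambda\hat J(z^\ast)=1$, and this is the nearest positive singularity of the generating function. Positivity of the coefficients combined with Pringsheim's theorem then forces the positive radius of convergence to equal $z^\ast$, so that $\nu_s^{\KRW}(\lambda)=\log z^\ast<1=|s|$. Hence saturation fails on $(1/\tilde\Xi,\lambdac^{\KRW})$, yielding $\tilde\lambda^{\KRW}=\lambda_{\sat}^{\KRW}$. The remaining case $\tilde\Xi\leq 1$ is trivial: then $\tilde\lambda^{\KRW}=\lambdac^{\KRW}$ and the first part already establishes saturation on the whole high-temperature regime.

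The telescoping upper bound is routine. The only real technical care lies in the $d=1$ addendum, where one must confirm that the singularity $z^\ast$ truly governs the exponential decay rate. This follows because the subadditive limit defining $\nu_s^{\KRW}$ (Claim~\ref{claim:nu_exists}) automatically agrees with the logarithm of the positive radius of convergence of the one-sided generating function, once one verifies the continuity of $\hat J$ up to $z=e$, which is itself immediate from $\tilde\Xi<\infty$.
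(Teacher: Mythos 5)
Your argument for the first part is essentially the paper's: the telescoping identity $|y|=\surcharge_t(y)+t\cdot y$ factors $e^{-n|s|}$ out of every walk weight, and dropping the endpoint constraint turns the sum into a geometric series in $\lambda\tilde\Xi$; the resulting $\tilde\lambda$ agrees with Eq.~\eqref{eq:lambda_tilde_KRW} since $\lambdac^{\KRW}=1$. For the $d=1$ addendum, the underlying idea is again the same (both proofs extract the inverse correlation length from the singularity structure of the generating series), but the packaging differs: the paper works directly with the tilted one-sided sum $\sum_{n\geq 1}e^{(1-\epsilon)|1|n}G_\lambda(0,n)$, rewrites it as a geometric series in $\lambda\sum_{y}\psi(y)e^{-\surcharge_t(y)-\epsilon|1|y}$, and uses continuity at $\epsilon=0$ (where the tilted mass equals $\lambda\tilde\Xi>1$) to exhibit divergence; you instead compute $\mathbb{G}(z)=\lambda\hat J(z)/(1-\lambda\hat J(z))$ in closed form, locate the pole $z^\ast$ by the intermediate value theorem, and invoke Pringsheim. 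Both routes are correct; the paper's is more elementary, yours is arguably cleaner once the complex-analytic step is fully justified.

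Two small points to tighten. First, $\hat J(z)=\sum_{y\neq 0}J_y z^y$ is \emph{not} strictly increasing on $(0,e)$: by symmetry $\hat J(z)=\sum_{y>0}J_y(z^y+z^{-y})$ is decreasing on $(0,1)$ and increasing on $(1,e^{|1|})$, with a minimum $\hat J(1)=\bar J=1$. You only use monotonicity on $(1,e)$, so nothing breaks, but the claim as stated is false. Second, the appeal to Pringsheim deserves a sentence of scaffolding: Pringsheim tells you that $R=e^{\nu_1(\lambda)}$ is a singularity of $\mathbb{G}_+(z)=\sum_{n\geq 1}G_\lambda(0,n)z^n$; to identify $R=z^\ast$ you should note that $\mathbb{G}$ is analytic on the annulus $1/z^\ast<|z|<z^\ast$ (since $|\lambda\hat J(z)|\leq\lambda\hat J(|z|)<1$ there and $\hat J$ is analytic on the open annulus), so $R<z^\ast$ is ruled out because $\mathbb{G}_+=\mathbb{G}-G_\lambda(0,0)-\mathbb{G}_-$ would then be analytic at $R$, and $R>z^\ast$ is ruled out by the genuine pole of $\mathbb{G}$ at $z^\ast$. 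You also quietly normalize $|1|=1$ (so $t=1$ and the boundary radius is $e$), which is harmless but should be said aloud. Finally, note that the paper's manipulation uses the symmetry $G_\lambda(0,n)=G_\lambda(0,-n)$ to pass from the one-sided to the two-sided sum, a step you sidestep by working with the Laurent series from the outset.
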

\begin{proof}
	Fix \(s\in\bbS^{d-1}\) and a dual vector \(t\). Assume that~\eqref{eq:SummabilityCondition} holds.
	Let \(G_{\lambda} \equiv G^{\KRW}_{\lambda}\).
	Set
	\begin{equation}\label{eq:lambda_tilde_KRW}
		\tilde{\lambda} = \min\Bigl\{\Bigl( \sum_{y\neq 0} \psi(y)e^{-\surcharge_t(y)}\Bigr)^{-1}, 1 \Bigr\} > 0.
	\end{equation}
	(Recall that \(\lambdac=1\) for the KRW.) Suppose \(\lambda< \tilde{\lambda}\). Let us introduce
	\begin{align*}
		A_k(n)
		&=
		\sum_{\substack{y_1, \dots, y_k\in\mathbb{Z}^d \setminus \{0\} \\ \sum_{i=1}^k y_i= ns }}\prod_{i=1}^k \lambda J_{y_i} \\
		&=
		e^{-n|s|} \sum_{\substack{y_1, \dots, y_k\in\mathbb{Z}^d \setminus \{0\} \\ \sum_{i=1}^k y_i= ns }} \prod_{i=1}^k \psi(y_i)e^{-\surcharge_t(y_i)}
		\leq e^{-n|s|} \Bigl(\lambda \sum_{y\neq 0} \psi(y) e^{-\surcharge_t(y)} \Bigr)^{\!k}.
	\end{align*}	
	Since \(\lambda \sum_{y\neq 0} \psi(y)e^{-\surcharge_t(y)} < 1\) for all \(\lambda\in [0, \tilde{\lambda})\), the first part of the result follows from
	\begin{equation}\label{eq:UB_A_n}
		G_{\lambda}(0,ns)= \sum_{k=1}^{\infty} A_k(n),
	\end{equation}
	which is a decomposition according to the length of the walk.
	
	To get the second part of the \(d=1\) case, one can assume \(\tilde{\lambda}<1=\lambdac\) (the claim being empty otherwise).
	Without loss of generality, we consider \(s=1\).
	The unique dual vector is \(t = |1|\).
	Let \(\lambda \in (\tilde{\lambda}, \lambdac)\).
	As \(\lambda < \lambdac\), \(\nu_{1}(\lambda)\) is the radius of convergence of \(\bbG_{\lambda}(z) = \sum_{n\geq 1} e^{zn} G_{\lambda}(0,n)\).
	It is therefore sufficient to find \(\epsilon>0\) such that \({\bbG_{\lambda}((1-\epsilon)|1|)} =\infty\).
	The summability of \(\bbG_{\lambda}((1-\epsilon)|1|)\) is equivalent to the summability of
	\begin{align*}
		\sum_{n\geq 1} e^{(1-\epsilon)|1|n} G_{\lambda}(0,n)
		&=
		\sum_{n\geq 1} e^{(1-\epsilon)tn} \sum_{k\geq 1} \sum_{\substack{y_1, \dots, y_k\in\mathbb{Z} \setminus \{0\} \\ \sum_{i=1}^k y_i= n }} \prod_{i=1}^k \lambda \psi(y_i) e^{-|y_i|}\\
		&=
		\sum_{k\geq 1} \sum_{y_1, \dots, y_k\in\mathbb{Z} \setminus \{0\}}\prod_{i=1}^k \lambda \psi(y_i) e^{-|y_i| +(1-\epsilon) t y_i}\\
		&=
		\sum_{k\geq 1} \Bigl( \lambda\sum_{y\in\mathbb{Z} \setminus \{0\}} \psi(y_i) e^{-\surcharge_t(y)} e^{-\epsilon |1| y} \Bigr)^{\!k}.
	\end{align*}
	Now, \(f(\epsilon) = \lambda\sum_{y\in\mathbb{Z} \setminus \{0\}} \psi(y) e^{-\surcharge_t(y)} e^{-\epsilon |1|y}\) is continuous in \(\epsilon\) on \([0,\infty)\), and \(f(0)>1\) by choice of \(\lambda\).
	So, it is still \(>1\) for some \(\epsilon>0\), implying the claim.
\end{proof}

\begin{remark}
	The statement of Lemma~\ref{lem:SaturationKRW} obviously extends to the Gaussian Free Field via~\eqref{eq:GFF_to_KRW}.
\end{remark}

We can now push the result to other models.
\begin{lemma}\label{lem:SaturationAtSmallLambda}
	Suppose~\ref{hyp:weak_SL} holds.
	Let \(s\in\bbS^{d-1}\) and \(t\) dual to \(s\). Assume that~\eqref{eq:SummabilityCondition} holds.
	Then, there exists \(\tilde\lambda>0\) such that, for any \(\lambda \in [0, \tilde\lambda)\), \(\nu_s(\lambda) = |s|\).
\end{lemma}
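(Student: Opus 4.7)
The plan is to reduce the general case to the \(\KRW\) case already handled in Lemma~\ref{lem:SaturationKRW}, using the comparison provided by~\ref{hyp:weak_SL}. Let \(\alpha\geq 0\) be the constant provided by~\ref{hyp:weak_SL}, and let \(\tilde\lambda^{\KRW}>0\) denote the threshold produced by Lemma~\ref{lem:SaturationKRW} applied to the direction \(s\) and the dual vector \(t\); its strict positivity is exactly the content of Lemma~\ref{lem:SaturationKRW} under the summability hypothesis~\eqref{eq:SummabilityCondition}. I would then set
\[
	\tilde\lambda \defby \min\bigl\{\tilde\lambda^{\KRW}/\alpha,\, \lambdac\bigr\} > 0 ,
\]
with the convention \(\tilde\lambda^{\KRW}/0 = +\infty\) in the (degenerate) case \(\alpha=0\).

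Fix any \(\lambda\in[0,\tilde\lambda)\). On the one hand, \(\alpha\lambda<\tilde\lambda^{\KRW}\), so Lemma~\ref{lem:SaturationKRW} gives \(\nu_s^{\KRW}(\alpha\lambda)=|s|\); on the other hand, \(\lambda<\lambdac\), so~\ref{hyp:weak_SL} furnishes a constant \(C=C(\lambda)\) with
\[
	G_\lambda(0,ns)\leq C\, G^{\KRW}_{\alpha\lambda}(0,ns) \quad \text{for every } n\geq 1 .
\]
Taking \(-\tfrac{1}{n}\log\) of both sides and sending \(n\to\infty\) yields
\[
	\nu_s(\lambda)\geq\nu_s^{\KRW}(\alpha\lambda)=|s| .
\]
Combining this with the complementary trivial bound \(\nu_s(\lambda)\leq|s|\) (Claim~\ref{claim:nu_trivialUB}, which belongs to the standing framework of the paper via~\ref{hyp:J_path_lower_bnd}) yields \(\nu_s(\lambda)=|s|\), as desired.

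I do not anticipate any serious obstacle: all the analytic work has already been performed in Lemma~\ref{lem:SaturationKRW}, and assumption~\ref{hyp:weak_SL} is engineered precisely so that such \(\KRW\) estimates transfer to the model at hand, at the modest price of rescaling \(\lambda\) by a constant factor \(\alpha\) and introducing an \(n\)-independent prefactor \(C\). The only minor wrinkle is to verify that \(\tilde\lambda > 0\), which, after the rescaling by \(\alpha\), reduces to positivity of \(\tilde\lambda^{\KRW}\) — guaranteed by the summability condition~\eqref{eq:SummabilityCondition} that is the central standing hypothesis of this section.
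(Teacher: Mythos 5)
Your proposal is correct and takes essentially the same route as the paper: fix \(\alpha\) from \ref{hyp:weak_SL}, rescale the \(\KRW\) threshold \(\tilde\lambda^{\KRW}\) from Lemma~\ref{lem:SaturationKRW} by \(1/\alpha\), and transfer the saturation bound via \(G_\lambda(0,ns)\leq C\,G^{\KRW}_{\alpha\lambda}(0,ns)\). The only (purely cosmetic) differences are that you spell out the \(-\tfrac1n\log\) step explicitly while the paper quotes the resulting exponential bound \(ce^{-n|s|}\) directly, and that you add the harmless extra precautions of capping \(\tilde\lambda\) by \(\lambdac\) and handling the degenerate \(\alpha=0\) case.
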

\begin{proof}
	Let \(\alpha\) be given by~\ref{hyp:weak_SL}.
	Set
	\begin{equation*}
		\tilde{\lambda} = \frac{1}{\alpha} \tilde{\lambda}^{\KRW} > 0.
	\end{equation*}
	By~\ref{hyp:weak_SL} and Lemma~\ref{lem:SaturationKRW}, for \(\lambda<\tilde{\lambda}'\),
	\begin{equation*}
		G_{\lambda}(0,ns) \leq C G_{\alpha\lambda}^{\KRW}(0,ns) \leq ce^{-n|s|}
	\end{equation*}
	for some \(\lambda\)-dependent constant \(c\), as \(\alpha\lambda < \tilde{\lambda}^{\KRW}\).
\end{proof}

%
\subsection{Prefactor for \(\KRW\) when \(\lambda<\lambda_{\sat}\)}\label{sec:pre_factor}
%

We first show the condensation phenomenon mentioned in the introduction for polynomial prefactors.
Namely, we prove

\begin{lemma}\label{lem:pre_fact_polynomial}
	Let \(s\in\bbS^{d-1}\) and \(t\) dual to \(s\). 
	Suppose that \(\psi(x)=C_{\alpha}\vert x\vert^{-\alpha}\) and that~\eqref{eq:SummabilityCondition} holds. Then, there exists \(\tilde{\lambda}>0\) (the same as in Lemma~\ref{lem:SaturationAtSmallLambda}) such that, for any \(\lambda<\tilde{\lambda}\), there exists \(c_+=c_{+}(\lambda)>0\) such that
	\begin{equation}
		G^{\KRW}_{\lambda}(0,ns) \leq c_+ J_{0,ns}.
	\end{equation}
\end{lemma}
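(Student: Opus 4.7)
The plan is to revisit the decomposition $G^{\KRW}_\lambda(0,ns) = \sum_{k\geq 1} A_k(n)$ from the proof of Lemma~\ref{lem:SaturationKRW} and to extract a factor $\psi(ns)$ from each convolution term by isolating a single ``giant step'' in the walk. Setting $\tilde P(y) := \lambda\psi(y)e^{-\surcharge_t(y)}$, the hypothesis $\lambda<\tilde\lambda$ guarantees $q:=\sum_{y\neq 0}\tilde P(y)<1$, and a direct computation (using $\sum_i y_i\cdot t = ns\cdot t = n|s|$) gives $e^{n|s|}A_k(n)=\tilde P^{\ast k}(ns)$. Thus the target inequality $G^{\KRW}_\lambda(0,ns)\leq c_+ J_{0,ns}$ reduces to proving
\[
\sum_{k\geq 1}\tilde P^{\ast k}(ns)\leq C(\lambda)\,\psi(ns).
\]

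The main step is a one-big-jump bound on each $\tilde P^{\ast k}(ns)$. Given a $k$-tuple $(y_1,\ldots,y_k)$ of nonzero vectors with $\sum_i y_i=ns$, the triangle inequality $n|s|=|\sum y_i|\leq k\max_i|y_i|$ forces some index $i^\ast$ to satisfy $|y_{i^\ast}|\geq n|s|/k$. By permutation symmetry I may assume $i^\ast=1$ at the cost of a combinatorial factor $k$. Since $\psi(y)=C_\alpha|y|^{-\alpha}$ depends monotonically on $|y|$, the lower bound $|y_1|\geq n|s|/k$ immediately yields $\psi(y_1)\leq k^\alpha\psi(ns)$, and hence $\tilde P(y_1)\leq k^\alpha\lambda\psi(ns)\,e^{-\surcharge_t(y_1)}$. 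Substituting this bound into the convolution, using $e^{-\surcharge_t(y_1)}\leq 1$, dropping the constraint $|y_1|\geq n|s|/k$ as an upper bound, and summing freely over $y_2,\ldots,y_k\in\bbZ^d\setminus\{0\}$ (the value of $y_1$ being determined by $\sum_i y_i=ns$) gives
\[
\tilde P^{\ast k}(ns)\leq k^{\alpha+1}\,\lambda\psi(ns)\,q^{k-1}.
\]
Summation over $k\geq 1$ yields the convergent series $\lambda\psi(ns)\sum_{k\geq 1}k^{\alpha+1}q^{k-1}$, and multiplying by $e^{-n|s|}$ recovers the lemma with $c_+(\lambda)=C_\alpha^{-1}\lambda\sum_{k\geq 1}k^{\alpha+1}q^{k-1}$.

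I do not foresee any genuine obstacle in the polynomial case: the crucial monotonicity step $\psi(y_1)\leq k^\alpha\psi(ns)$ is essentially automatic once the biggest step has been identified, and the remaining $k-1$ factors are absorbed by the small total mass $q^{k-1}$. The real difficulty will presumably arise in the companion stretched-exponential case $\psi(x)\propto e^{-a|x|^\alpha}$ with $0<\alpha<1$ of Theorem~\ref{thm:prefactor}: there $\psi(y_1)$ can no longer be controlled by $\psi(ns)$ up to a polynomial-in-$k$ factor, and one must exploit the concavity of $|\cdot|^\alpha$ to compare the cost of one giant step with that of several medium ones. That is the authentic subexponential-tail calculation, but it is not needed for the polynomial lemma above.
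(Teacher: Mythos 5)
Your proof is correct and takes essentially the same approach as the paper (one-big-jump, polynomial control of $\psi$ at the big step via $\psi(y_1)\leq k^\alpha\psi(ns)$, geometric absorption of the remaining factors by $q^{k-1}$); you in fact streamline it slightly by running this estimate uniformly over all $k$, whereas the paper first disposes of the tail $k\gtrsim\log n$ using the crude bound $A_k(n)\leq e^{-n|s|}(\lambda\tilde\lambda^{-1})^k$ and only then applies the big-jump argument for $k\leq C_1\log n$, a split that your computation shows is not actually needed. The only blemish is the spurious $C_\alpha^{-1}$ in your final constant: since $J_{0,ns}=\psi(ns)e^{-n|s|}$ already carries the factor $C_\alpha$, the correct value is simply $c_+(\lambda)=\lambda\sum_{k\geq 1}k^{\alpha+1}q^{k-1}$.
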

\begin{remark}
	As \(\surcharge_t \geq 0\), \(\alpha>d\) always implies~\eqref{eq:SummabilityCondition}.
\end{remark}
\begin{proof}
	Fix \(s\in\bbS^{d-1}\) and a dual vector \(t\).
	Denote \(G_{\lambda}\equiv G^{\KRW}_{\lambda}\).
	Let \(\tilde{\lambda}\) be given by~\eqref{eq:lambda_tilde_KRW} and fix \(\lambda<\tilde{\lambda}\). Start as in the proof of Lemma~\ref{lem:SaturationKRW}.
	Define
	\begin{equation*}
		A_k(n)
		=
		\sum_{\substack{\gamma\in\walk(0,ns)\\ \abs{\gamma} = k}} \prod_{i=1}^k \lambda J_{\gamma_{i-1}\gamma_i}
		=
		e^{-n|s|} \sum_{\substack{y_1,\dots,y_k\neq 0\\ \sum y_i = ns}} \prod_{i=1}^k \lambda \psi(y_i) e^{-\surcharge_t(y_i)}
		\leq
		e^{-n|s|} (\lambda\tilde{\lambda}^{-1})^k.
	\end{equation*}
	Since \(\lambda < \tilde{\lambda}\), the inequality above implies that there exist \(C_{1},C_{2}>0\) such that 
	\begin{equation*}
		\sum\limits_{k=C_{1}\log(n)}^{\infty}\sum_{\substack{\gamma\in\walk(0,ns) \\ 	\abs{\gamma}=k}}\lambda^{k}\prod_{i=1}^{k}J_{\gamma_{i-1} \gamma_i}\leq C_{2}J_{0,ns}.
	\end{equation*}
	Therefore, we can assume that \(k\leq C_{1}\log(n)\).
	Let \(\gamma\in\walk(0,ns)\) with \(\vert\gamma\vert=k\).
	Since \(k<n\), there exists \(j\) such that \(\vert\gamma_{j}-\gamma_{j-1}\vert\geq \vert ns\vert /k\).
	Then, we can write
	\begin{align*}
		A_k(n)
		&\leq
		k\sum_{y: \vert y\vert\geq \vert ns\vert /k} \psi(y)e^{-\vert y\vert }\sum_{\substack{\gamma\in\walk(0,ns-y) \\ 	\abs{\gamma}=k-1}} \lambda^{k}\prod_{i=1}^{k-1}J_{\gamma_{i-1} \gamma_i}\\
		&\leq
		k e^{-n|s|}\psi(ns/k) \lambda \sum_{\substack{y_1,\dots y_{k-1}\\ \vert\sum y_i -ns\vert\geq \vert ns\vert /k } } 	\prod_{i=1}^{k-1} \lambda\psi(y_i)e^{-\surcharge_t(y_i)}\\
		&\leq
		C_3k^{1+\alpha} e^{-n|s|}\psi(ns) \lambda \Big(\sum_{y_1\neq 0} 	\lambda\psi(y_1)e^{-\surcharge_t(y_1)}\Big)^{k-1}\\
		&= C_3J_{0,ns} k^{1+\alpha} \tilde{\lambda} (\lambda \tilde{\lambda}^{-1})^{k},
	\end{align*}
	where we used \(\vert y\vert\geq \vert ns\vert /k\) and \(\surcharge_t\geq 0\) in the second line, the polynomial form of \(\psi\) in the third one, and the definition of \(\tilde{\lambda}\) in the last one. \(C_3\) is a constant depending on \(|\ |\) and \(\alpha\) only.
	This yields
	\begin{align*}
		\sum_{k=1}^{C_{1}\log(n)} A_k(n)
		\leq
		C_3J_{0,ns} \tilde{\lambda} \sum_{k=1}^{\infty}k^{\alpha+1} (\lambda \tilde{\lambda}^{-1})^k.
	\end{align*}
	Since \(\lambda<\tilde{\lambda}\), the last sum converges, which concludes the proof.
\end{proof}

We now show the same condensation phenomenon for a class of fast decaying prefactors in a perturbative regime of \(\lambda\). Namely, we assume that the function \(\psi\) satisfies
\begin{enumerate}[label={\ensuremath{\mathrm{[H_\arabic*]}}}, start=1]
	\item \label{hyp:psi_hyp1}
	\(\psi(y)\) depends only on \(\abs{y}\) and is decreasing in \(\abs{y}\).
	\item \label{hyp:psi_hyp2} there exist \(c>0\) and \(0<a\leq 1\) such that
	\begin{equation}\label{eq:prefactor_super_summability}
	\sum_{y\neq 0} \psi(y)^{a} e^{-\surcharge_t(y)} < \infty,
	\end{equation}
	and, for every \(n, m\in\bbR_+\) with \(m\leq n\),
	\begin{equation}\label{eq:prefactor_factor_bnd}
	\psi(n)\psi(m)\leq c\psi(n+m)\psi(m)^{a}.
	\end{equation}
\end{enumerate}

These assumptions are in particular true for prefactors exhibiting stretched exponential decay, \(\psi (x)= C\exp (-b \abs{x}^{\gamma} )\) with \(b>0\) and \(0<\gamma <1\), as well as for power-law decaying prefactors \(\psi(x)= C\abs{x}^{-\alpha}\) with \(\alpha>d\).

\begin{lemma}
	\label{lem:pre_fact_fast_dec}
	Fix \(s\in\bbS^{d-1}\) and a dual vector \(t\). Assume that \(\psi\) is such that~\ref{hyp:psi_hyp1} and~\ref{hyp:psi_hyp2} hold (in particular, \eqref{eq:SummabilityCondition} holds for \(t\)).
	Then, there exists \(\lambda_{0}>0\) such that, for any \(\lambda < \lambda_{0}\), one can find \(c_+>0\) such that
	\begin{equation}
		G^{\KRW}_{\lambda}(0,ns) \leq c_+ J_{0,ns} .
	\end{equation}
\end{lemma}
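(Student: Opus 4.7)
The plan is to refine the short-walk argument of Lemma~\ref{lem:pre_fact_polynomial} by replacing the polynomial absorption $\psi(ns/k)\leq k^\alpha \psi(ns)$, which was specific to power-law prefactors, with a direct use of the factorization inequality~\eqref{eq:prefactor_factor_bnd}. This should produce a factor $\psi(ns)$ uniformly in $k$, so that no splitting between short and long walks is needed. As in the previous lemma, decompose by walk length:
\begin{equation*}
G^{\KRW}_\lambda(0,ns) = \sum_{k\geq 1} A_k(n),\qquad A_k(n) = \lambda^k e^{-n|s|}\!\!\sum_{\substack{y_1,\dots,y_k\neq 0 \\ \sum_i y_i = ns}} \prod_{i=1}^k \psi(y_i)\, e^{-\surcharge_t(y_i)}.
\end{equation*}

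For a fixed walk $(y_1,\dots,y_k)$ with $\sum_i y_i = ns$, I would pick an index $j$ realizing $\max_i |y_i|$, and then iteratively apply~\eqref{eq:prefactor_factor_bnd}, absorbing each $\psi(|y_i|)$ with $i\neq j$ into the growing ``large'' argument. The condition $m\leq n$ in~\eqref{eq:prefactor_factor_bnd} is preserved throughout: after the first $\ell$ absorptions the big argument equals $|y_j|+\sum_{i\in I_\ell}|y_i|\geq |y_j|\geq |y_{i'}|$ for any $i'\notin I_\ell\cup\{j\}$. This yields
\begin{equation*}
\prod_{i=1}^k \psi(|y_i|)\leq c^{k-1}\psi\Big(\sum_{i=1}^k |y_i|\Big)\prod_{i\neq j}\psi(|y_i|)^a\leq c^{k-1}\psi(ns)\prod_{i\neq j}\psi(|y_i|)^a,
\end{equation*}
where the second inequality uses $\sum_i |y_i|\geq |ns|$ combined with monotonicity of $\psi$ from~\ref{hyp:psi_hyp1}.

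Plugging this bound into $A_k(n)$, summing over the location $j$ of the maximum (at most a factor $k$), using $e^{-\surcharge_t(y_j)}\leq 1$ on the distinguished step, and dropping the constraint $\sum_i y_i = ns$ (lawful in an upper bound) decouples the remaining indices. Setting $B=\sum_{y\neq 0}\psi(y)^a e^{-\surcharge_t(y)}<\infty$ by~\eqref{eq:prefactor_super_summability}, one obtains
\begin{equation*}
A_k(n)\leq k\, \lambda\,(c\lambda B)^{k-1}\psi(ns) e^{-n|s|} = k\,\lambda\,(c\lambda B)^{k-1} J_{0,ns}.
\end{equation*}
Taking $\lambda_0 = (cB)^{-1}$, the series $\sum_{k\geq 1} k\,(c\lambda B)^{k-1}$ converges for every $\lambda<\lambda_0$, giving $G^{\KRW}_\lambda(0,ns)\leq c_+(\lambda) J_{0,ns}$ with, e.g., $c_+(\lambda)=\lambda/(1-c\lambda B)^2$.

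The main obstacle is purely combinatorial: making sure the iterated application of~\eqref{eq:prefactor_factor_bnd} is legitimate, in the sense that the inequality's hypothesis $m\leq n$ holds at each step. Choosing $j$ as the \emph{global} maximum handles this automatically, because the absorbing argument only grows, while every unprocessed $|y_i|$ was already bounded by the initial $|y_j|$. The rest of the proof is a routine geometric summation, and the key quantitative inputs~\eqref{eq:prefactor_factor_bnd} and~\eqref{eq:prefactor_super_summability} both appear at exactly one step, which also explains why the statement is necessarily perturbative in $\lambda$.
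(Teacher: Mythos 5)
Your proof is correct and follows essentially the same route as the paper's: decompose by walk length, single out the step of maximal norm (costing a factor $k$), iterate~\eqref{eq:prefactor_factor_bnd} along the remaining steps with the big argument accumulating (so that $m\leq n$ holds throughout), invoke monotonicity of $\psi$ from~\ref{hyp:psi_hyp1} together with the triangle inequality to get $\psi$ of the accumulated norm bounded by $\psi(ns)$, and finish with the geometric series controlled by~\eqref{eq:prefactor_super_summability}. The only cosmetic difference is that the paper relabels so the distinguished step is the return step $y_k=ns-\sum_{i<k}y_i$, whereas you keep it at an arbitrary index $j$; both yield $\lambda_0=(cB)^{-1}$ and the same bound.
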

\begin{remark}
	On can notice that in the case \(\psi(x) = C_{\alpha}|x|^{-\alpha}\),~\eqref{eq:prefactor_factor_bnd} is satisfied with \(a=1\). In which case, \(c= 2^{\alpha}\) and~\eqref{eq:prefactor_super_summability} is simply~\eqref{eq:SummabilityCondition}. The condition is therefore the same as the one of Lemma~\ref{lem:pre_fact_polynomial} but the \(\lambda_0\) of Lemma~\ref{lem:pre_fact_fast_dec} is smaller than the \(\tilde{\lambda}\) of Lemma~\ref{lem:pre_fact_polynomial} (\(\tilde{\lambda} = 2^{\alpha} \lambda_0\)).
\end{remark}
\begin{proof}
	Fix \(s\in\bbS^{d-1}\) and a dual vector \(t\) and let \(\psi\) be as in the statement. Write \(G_{\lambda}\equiv G^{\KRW}_{\lambda}\).
	Let \(c, a\) be given by~\ref{hyp:psi_hyp2}.
	Let \(\lambda_0\) be given by
	\begin{equation*}
		\lambda_0 = \Bigl(c\sum_{y\neq 0} \psi(y)^{a}e^{-\surcharge_t(y)} \Bigr)^{\!-1}>0.
	\end{equation*}
	We can rewrite \(G_{\lambda}\) as
	\begin{align*}
		e^{n|s|} G_{\lambda}(0,ns)
		&=
		\sum\limits_{k=1}^{\infty} \lambda^k \sum_{\substack{y_1, \dots, y_k \\ \sum_{i=1}^k y_i=ns }} \prod_{i=1}^k \psi(y_i) e^{-\surcharge_t(y_i)}\\
		&\leq
		\sum_{k=1}^\infty \lambda^k k\sum_{\substack{y_1, \dots, y_{k-1}\\ \abs{ns -\sum_{i=1}^{k-1} y_i} \geq \max_i\abs{y_i}}} \psi\Bigl(ns - \sum_{i=1}^{k-1} y_i\Bigr) \prod_{i=1}^{k-1} \psi(y_i) e^{-\surcharge_t(y_i)},
	\end{align*}
	where we used \(\surcharge_t \geq 0\).
	Now, iterating~\eqref{eq:prefactor_factor_bnd} \(k\) times yields that, for any \(k\geq 1\) and any \(y_1, \dots, y_{k-1}\neq 0\) such that \(\abs{ns - \sum_{i=1}^{k-1}y_i} \geq \max_i\abs{y_i}\),
	\begin{equation*}
		\psi\Bigl(ns-\sum_{i=1}^{k-1} y_i\Bigr) \prod_{i=1}^{k-1} \psi(y_i)
		\leq
		c^k \psi(ns) \prod_{i=1}^{k-1} \psi(y_i)^{a}.
	\end{equation*}
	This gives
	\begin{equation*}
		G_{\lambda}(0,ns)
		\leq
		e^{-n|s|} \psi(ns) \lambda c \sum_{k=1}^\infty k \Bigl(\lambda c\sum_{y\neq 0} \psi(y)^{a} e^{-\surcharge_t(y)} \Bigr)^{\!k-1}.
	\end{equation*}
	The result follows since \(\lambda<\lambda_0\).
\end{proof}
As for the saturation result, one can use~\ref{hyp:weak_SL} to push the result to other models.
\begin{corollary}
	\label{cor:condensation}
	Assume that~\ref{hyp:weak_SL} and~\ref{hyp:J_path_lower_bnd} hold.
	Let \(s\in\bbS^{d-1}\) and \(t\) be a dual vector. Suppose that \(\psi\) fulfill the hypotheses of either Lemma~\ref{lem:pre_fact_polynomial} or Lemma~\ref{lem:pre_fact_fast_dec}.
	Then, there exists \(\lambda_0>0\) such that, for any \(\lambda<\lambda_0\),
	\begin{equation*}
		c_-(\lambda)J_{0,ns}\leq G_{\lambda}(0,ns) \leq c_+(\lambda) J_{0,ns},
	\end{equation*}
	for some \(c_+(\lambda),c_-(\lambda)>0\).
\end{corollary}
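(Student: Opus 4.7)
The plan is to combine the two assumptions \ref{hyp:weak_SL} and \ref{hyp:J_path_lower_bnd} with the \(\KRW\) condensation results of Lemma~\ref{lem:pre_fact_polynomial} and Lemma~\ref{lem:pre_fact_fast_dec}. Both bounds will follow essentially for free, so the ``proof'' is a matter of bookkeeping on the admissible range of \(\lambda\) rather than substantive new work.

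For the lower bound, I would apply \ref{hyp:J_path_lower_bnd} to the trivial collection \(\Gamma = \{\gamma\}\subset\saw(0, ns)\), where \(\gamma = (0, ns)\) is the single one-step walk from \(0\) to \(ns\). Since \(\abs{\gamma} = 1\), the right-hand side of \eqref{eq:J_path_lower_bnd} reduces to \(c_\lambda C_\lambda J_{0, ns}\), giving immediately
\[
G_\lambda(0, ns) \;\geq\; c_\lambda C_\lambda\, J_{0, ns},
\]
so that one may take \(c_-(\lambda) = c_\lambda C_\lambda > 0\).

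For the upper bound, let \(\alpha\) be the constant appearing in \ref{hyp:weak_SL} and let \(\lambda^\star > 0\) denote the threshold produced by Lemma~\ref{lem:pre_fact_polynomial} (in the polynomial case) or by Lemma~\ref{lem:pre_fact_fast_dec} (under hypotheses \ref{hyp:psi_hyp1}--\ref{hyp:psi_hyp2}). I would set \(\lambda_0 = \lambda^\star/\alpha\) (interpreted as \(+\infty\) when \(\alpha = 0\)); then for any \(\lambda < \lambda_0\) the parameter \(\alpha\lambda\) lies below \(\lambda^\star\), so that the relevant \(\KRW\) lemma applies and one obtains
\[
G_\lambda(0, ns) \;\leq\; C\, G^{\KRW}_{\alpha\lambda}(0, ns) \;\leq\; C\, c_+^{\KRW}(\alpha\lambda)\, J_{0, ns},
\]
the first inequality being \ref{hyp:weak_SL} and the second the conclusion of the relevant \(\KRW\) lemma. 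Setting \(c_+(\lambda) = C\cdot c_+^{\KRW}(\alpha\lambda)\) completes the upper bound.

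There is no real obstacle in this argument: once Lemmas~\ref{lem:pre_fact_polynomial} and~\ref{lem:pre_fact_fast_dec} are established, the corollary is a direct soft consequence of the comparison inequality \ref{hyp:weak_SL} and the path-representation lower bound \ref{hyp:J_path_lower_bnd}. The only point that requires attention is that the admissible range of \(\lambda\) must be rescaled by \(\alpha^{-1}\) in order to ensure that \(\alpha\lambda\) stays within the regime where the \(\KRW\) condensation holds, which is precisely the content of the choice \(\lambda_0 = \lambda^\star/\alpha\).
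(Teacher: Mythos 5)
Your proposal is correct and matches the paper's (implicit) argument: the upper bound follows by rescaling \(\lambda\) so that \(\alpha\lambda\) falls below the threshold of the relevant \(\KRW\) lemma and then invoking~\ref{hyp:weak_SL}, while the lower bound follows from~\ref{hyp:J_path_lower_bnd} applied to the single one-step path \((0,ns)\). The paper itself remarks after the corollary that using~\ref{hyp:J_path_lower_bnd} for the lower bound is an overkill (the direct arguments of Appendix~\ref{app:Properties} suffice), but this is exactly the soft route it has in mind; the only cosmetic point is that \(\lambda_0\) should of course also be capped by \(\lambdac\) so that \(G_\lambda\) is defined, which is implicit in your phrasing.
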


The use of~\ref{hyp:J_path_lower_bnd} to obtain the lower bound is obviously an overkill and the inequality follows from the less restrictive versions of the arguments we use in Appendix~\ref{app:Properties}.

%
\subsection{Prefactor for \(\KRW\) when \(\lambda>\lambda_{\sat}\)}\label{sec:pre_factorOZ}
%

In this section, we establish Ornstein--Zernike asymptotics for \(\KRW\) whenever there is a mass gap (that is, when saturation does not occur). We expect similar results for general models, but the proofs would be much more intricate. We will come back to this issue in another paper.

\begin{lemma}\label{lem:OZ}
	Let \(s\in\bbS^{d-1}\) and \(\lambda\in (\lambda_{\sat}(s),\lambdaqlr)\). There exists \(C_{\lambda}=C(\lambda)>0\) such that 
	\begin{equation}
	G_{\lambda}^{\KRW}(0,ns)=\dfrac{C_{\lambda}}{\vert ns\vert^{(d-1)/2}}e^{-\nu_{s}(\lambda)n}(1+o_{n}(1)).
	\end{equation}
\end{lemma}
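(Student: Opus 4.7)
The starting observation is that, since $\bar J=1$, the weights $P(x)=J_{0x}$ form a probability distribution on $\bbZ^d\setminus\{0\}$, so that
\[
G^{\KRW}_\lambda(0,x)=\sum_{n'\geq 1}\lambda^{n'}P^{*n'}(x).
\]
My plan is to run the classical Ornstein--Zernike strategy: exponentially tilt $P$ by a carefully chosen $t^*=t^*(\lambda,s)$ so that $G^{\KRW}_\lambda$ along the direction $s$ becomes the Green function of a transient random walk whose drift is parallel to $s$, then combine a local central limit theorem (LCLT) with a one-dimensional Gaussian summation over the number of steps $n'$.

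First I would set up the Laplace transform $\hat P(t)=\sum_x e^{t\cdot x}P(x)=\sum_x\psi(x)e^{-|x|+t\cdot x}$, which is analytic and strictly log-convex on $\Wulff^\circ$ thanks to the full support of $P$. From $\hat G_\lambda(t)=\lambda\hat P(t)/(1-\lambda\hat P(t))$, valid whenever $\lambda\hat P(t)<1$, one reads off the variational identity
\[
\nu_s(\lambda)=\sup\bsetof{t\cdot s}{t\in\Wulff^\circ,\ \lambda\hat P(t)\leq 1}.
\]
Under the hypothesis $\lambda>\lambda_{\sat}(s)$, i.e.\ $\nu_s(\lambda)<|s|$, any boundary maximizer $t\in\partial\Wulff$ would have to be dual to $s$ (giving $t\cdot s=|s|$), so the supremum must be attained at a unique interior saddle point $t^*\in\Wulff^\circ$ characterized by
\[
\lambda\hat P(t^*)=1,\qquad \nabla\hat P(t^*)=\kappa\,s\text{ for some }\kappa>0.
\]
The tilted law $P_{t^*}(x)=\lambda e^{t^*\cdot x}P(x)$ is then a probability measure on $\bbZ^d\setminus\{0\}$ with full lattice support, drift $\mu=\nabla\log\hat P(t^*)$ a positive scalar multiple of $s$, positive-definite covariance $\Sigma$, and finite exponential moments of all orders. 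Combining the tilting identity with $t^*\cdot s=\nu_s(\lambda)$ and $\lambda\hat P(t^*)=1$ collapses the KRW Green function to
\[
G^{\KRW}_\lambda(0,ns)=e^{-\nu_s(\lambda)n}\sum_{n'\geq 1}P_{t^*}^{*n'}([ns]).
\]

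To conclude, I would apply a local CLT to $P_{t^*}^{*n'}$: for $n'$ in a window of width $K\sqrt n$ around $n_\star=n|s|/|\mu|$,
\[
P_{t^*}^{*n'}([ns]) = \frac{1+\sfo(1)}{(2\pi n')^{d/2}\sqrt{\det\Sigma}}\exp\!\Big(-\tfrac1{2n'}\,\langle ns-n'\mu,\Sigma^{-1}(ns-n'\mu)\rangle\Big),
\]
while contributions from $n'$ outside this window are super-polynomially small by Cram\'er-type bounds based on the finite exponential moments of $P_{t^*}$. The remaining sum over $n'$ reduces to a one-dimensional Gaussian sum in $n'-n_\star$ with variance of order $n$; this produces an extra factor of order $\sqrt n$, which combines with the $(n')^{-d/2}$ from the LCLT to yield the announced $C_\lambda\,n^{-(d-1)/2}$ prefactor, $C_\lambda$ being an explicit positive function of $\mu$ and $\Sigma$. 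The main obstacle I anticipate is establishing the LCLT uniformly in $n'$ together with matching tail control outside the $\sqrt n$ window: uniformity follows from strong aperiodicity of $P_{t^*}$ (a consequence of $P$ having full support on $\bbZ^d\setminus\{0\}$) and a standard Fourier bound $|\hat P_{t^*}(k)|\leq 1-c\|k\|^2$ near $0$ together with $|\hat P_{t^*}(k)|<1$ away from $0$ on $[-\pi,\pi]^d$, both of which follow from the full support and finite exponential moments of the tilted walk.
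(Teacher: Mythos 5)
Your proof takes essentially the same Ornstein--Zernike tilting route as the paper's: change measure to a tilted step distribution whose Green function reproduces $e^{\nu_s n}G^{\KRW}_\lambda(0,ns)$, apply a local CLT in a $\sqrt n$-window around the typical number of steps, and control the tails by large deviations. The presentational difference lies in how the tilt is identified. You characterize $t^*$ directly by the saddle-point conditions $\lambda\hat P(t^*)=1$ and $\nabla\hat P(t^*)\parallel s$, which makes it manifest that the tilted law is a probability measure with drift along $s$; the paper instead defines $\tilde t_s$ as the dual vector to $s$ for the norm $\nu$, and then verifies \emph{a posteriori} that the tilted weight sums to $1$ (via a radius-of-convergence argument comparing both sides of the walk expansion) and that the drift is proportional to $s$ (by contradiction, using large deviations). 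Your route is arguably cleaner at these two spots, but it leans on the duality $\nu_s(\lambda)=\sup\{t\cdot s:\lambda\hat P(t)\le 1\}$, which deserves a word of justification (it is precisely what the paper's generating-function computation establishes). One small gap you should fill: your claim that a boundary maximizer $t\in\partial\Wulff$ would necessarily be dual to $s$ is not immediate. The correct argument is via $\Wulff_\nu\subset\Wulff$: if $\tilde t_s\in\partial\Wulff$ it is dual (for $|\cdot|$) to some $s'$, and the supporting hyperplane of $\Wulff$ at $\tilde t_s$ also supports $\Wulff_\nu$, forcing $\nu_{s'}=|s'|$; one then needs smoothness of the level set $\{\lambda\hat P=1\}$ (which holds by analyticity of $\hat P$) to conclude $s'=s$ and derive the contradiction with $\nu_s<|s|$. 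Aside from these cosmetic differences, the two proofs are the same argument.
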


\begin{proof}
We follow the ideas developed in \cite{Campanino+Ioffe-2002}.
We first express \(e^{\nu_s(\lambda) n} G^{\KRW}_{\lambda}(0,ns)\) as a sum of probabilities for a certain random walk.
We then use the usual local limit theorem on this random walk to deduce the sharp prefactor.

Let \(G_{\lambda}=G_{\lambda}^{\KRW}, \nu_s = \nu_s(\lambda)\).
Since \(\lambda<\lambdaqlr\), \(\nu\) defines a norm on \(\bbR^d\) (see Claim 2).
Let \(\tilde{t}_s\) be a dual vector to \(s\) with respect to the norm \(\nu\).
We can rewrite \(e^{\nu_s n} G_{\lambda}(0,ns)\) in the following way:
\begin{equation}
	e^{\nu_s n} G_{\lambda}(0,ns)
	=
	\sum_{N=1}^{\infty} \sum_{\substack{y_1, \dots, y_N \\ \sum y_i=ns}} \prod_{i=1}^{N} w(y_i) ,
\end{equation}
with \(w(y_i) = \lambda e^{\tilde{t}_s \cdot y_i - |y_i|} \psi(y_i)\).
Remark that \(w(y_i)\) has an exponential tail, since \(\nu_s < |s|\).
Moreover, \(w(y)\) defines a probability measure on \(\bbZ^d \setminus \{0\}\).
Indeed, let \(t_s\) be a dual vector to \(s\) with respect to the norm \(|\cdot|\).
Notice that, for \(x\in\bbR\),
\begin{align*}
	\sum_{k\geq 1} x^{|s| k}e^{\nu_s k} G_{\lambda}(0,ks)
	&= 
	\sum_{N\geq 1} \sum_{k\geq 1} \sum_{\substack{y_1,\dots,y_N \\ \sum y_i=ks }} \prod_{i=1}^{N} x^{t_s \cdot y_i} w(y_i) \\
	&\leq
	\sum_{N\geq 1}\biggl( \sum_{y\neq 0} x^{t_s \cdot y} w(y) \biggr)^{\!\!N} \\
	&=
	\dfrac{\sum_{y\neq 0} x^{t_s\cdot y} w(y)}{1-\sum_{y\neq 0} x^{t_s\cdot y} w(y)}.
\end{align*}
The radius of convergence of the series in the left-hand side is equal to 1, whereas the radius of convergence of the series in the right-hand side is strictly larger than 1, since \(w(y)\) has an exponential tail.
It follows that, for \(x=1\), we must have
\begin{equation}
\sum\limits_{y\neq 0}w(y)=1.
\end{equation}
We denote by \(P_0\) the law of the random walk \((S_n)_{n\geq 1}\) on \(\bbZ^d\), starting at \(0\in\bbZ^{d}\) and with increments of law \(w\), and by \(E_0\) the corresponding expectation.
We can rewrite
\begin{equation}\label{eq:RW}
	e^{\nu_{s}n} G_{\lambda}(0,ns) = \sum_{N\geq 1} P_0(S_N = ns). 
\end{equation}
Remark that \(E_0(S_1) = \mu s\) for some \(\mu\in\bbR\).
Indeed, were it not the case, rough large deviation bounds would imply the existence of \(c>0\) such that \(P_0(S_N = ns) \leq e^{-c\max\{n,N\}}\) for all \(N\). Using~\eqref{eq:RW}, this would imply \(e^{\nu_s n} G(0,ns) \leq e^{-c' n}\), for some \(c'>0\), contradicting the fact that \(e^{\nu_s n} G(0,ns) = e^{\sfo(n)}\).

Fix \(\delta>0\) small.
On the one hand, uniformly in \(y\) such that \(\abs{y - n\mu s} \leq n^{1/2-\delta}\), we have, by the local limit theorem,
\begin{equation}
	\sum_{N:\, \abs{N-n} \leq n^{1/2+\delta}} P_{0}(S_N=y)
	=
	\dfrac{\tilde{C_\lambda}}{\vert ns\vert^{(d-1)/2}} \bigl(1+\sfo_n(1)\bigr),
\end{equation}
where \(\tilde{C_\lambda}>0\) can be computed explicitely.
On the other hand, since \(w\) has exponential tail, a standard large deviation upper bound shows that
\begin{equation}
	\sum_{N:\,\abs{N-n} > n^{1/2 +\delta}} P_0(S_N=y) \leq e^{-c_2 n^{2\delta'}},
\end{equation}
for some small \(\delta'>0\).
Therefore, it follows from~\eqref{eq:RW} that 
\begin{equation}
	e^{\nu_s n} G_{\lambda}(0,ns) = \dfrac{C_\lambda}{|ns|^{(d-1)/2}} \bigl(1+\sfo_n(1)\bigr),
\end{equation}
with \(C_\lambda = \tilde{C_\lambda}\mu^{(d-1)/2}\).
\end{proof}

%
\subsection{Absence of saturation at large \(\lambda\)}
\label{sec:lambda_sat_less_lambda_c}
%

\begin{lemma}\label{lem:nontrivial_mass_gap_regim_d1}
	Suppose \(d=1\) and \(*\in\{\Ising, \Potts, \FK, \XY\}\).
	Then, there exists \(\lambda_0 \in (0, \infty)\) such that \(0 < \nu^*(\lambda) < \abs{1}\) when \(\lambda > \lambda_0\).
\end{lemma}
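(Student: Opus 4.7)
The plan is to reduce each of the four models to its one-dimensional nearest-neighbour version via a ferromagnetic monotonicity argument and then exploit the essentially explicit decay rate in that simpler setting.

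First, I would use that for each of the four models, monotonicity of $G_\lambda^\ast(0,n)$ in every coupling $J_{ij}$ is available: the second Griffiths inequality for the Ising and Potts models, FKG in the form of stochastic monotonicity in the edge weights for FK percolation with $q\geq1$, and Ginibre's inequality for the $\XY$ model. Applied in a finite box $\Lambda_N$ and then passed to the thermodynamic limit, it gives
\begin{equation*}
G^\ast_\lambda(0,n)\ \geq\ G^{\ast,\mathrm{nn}}_\lambda(0,n),
\end{equation*}
where $G^{\ast,\mathrm{nn}}_\lambda$ denotes the two-point function of the corresponding model on $\bbZ$ with couplings $J_{ij}=0$ for $|i-j|\geq 2$ and the nearest-neighbour coupling $J_{01}>0$ preserved.

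Next, I would estimate $G^{\ast,\mathrm{nn}}_\lambda(0,n)$ using the one-dimensional transfer matrix. For the Ising model this yields $\mu^{\Ising,\mathrm{nn}}_\beta(\sigma_0\sigma_n)=\tanh(\beta J_{01})^n$. For the Potts and FK models, diagonalization of the $q\times q$ transfer matrix gives $G^{\Potts,\mathrm{nn}}_\beta(0,n)=(1-q^{-1})r^n$ and $\Phi^{\FK,\mathrm{nn}}_{\beta,q}(0\leftrightarrow n)=r^n$ with $r=(e^{\beta J_{01}}-1)/(e^{\beta J_{01}}+q-1)$; the FK identity also follows from the Potts--FK correspondence~\eqref{eq:Potts_FK_Corresp}. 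For the $\XY$ model, the integral transfer operator on $L^2(\bbS^1)$ is diagonalised by the characters $e^{ik\theta}$, producing $\mu^{\XY,\mathrm{nn}}_\beta(\cos(\theta_0-\theta_n))=(I_1(\beta J_{01})/I_0(\beta J_{01}))^n$, with $I_k$ the modified Bessel function of order $k$. In all four cases one obtains
\begin{equation*}
G^{\ast,\mathrm{nn}}_\lambda(0,n)\ \geq\ c^\ast\, f^\ast(\lambda)^n
\end{equation*}
for some $c^\ast>0$ and an explicit function $f^\ast(\lambda)$ satisfying $f^\ast(\lambda)\to 1$ as $\lambda\to\infty$.

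Combining the two steps gives $\nu^\ast(\lambda)\leq-\log f^\ast(\lambda)\to 0$ as $\lambda\to\infty$, so there exists $\lambda_0<\infty$ with $\nu^\ast(\lambda)<|1|$ for all $\lambda>\lambda_0$. The positivity $\nu^\ast(\lambda)>0$ is automatic for $\lambda<\lambdaqlr$ by~\eqref{eq:lambdaqlr_def}, and in dimension one all four models are known (Ruelle--Dobrushin for $\Ising$, $\Potts$ and $\FK$; absence of QLRO for $\XY$) to satisfy $\lambdaqlr=+\infty$, so the range $(\lambda_0,+\infty)$ is indeed a range of strictly positive $\nu^\ast(\lambda)$. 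The main technical obstacle I foresee is ensuring that the ferromagnetic monotonicity passes cleanly to the infinite-volume limit under exponentially decaying long-range interactions: one has to apply the inequality in each $\Lambda_N$ and then use continuity of $\lambda\mapsto G_\lambda(x,y)$ as in~\ref{hyp:left_cont} to obtain the infinite-volume bound. For $\Ising$ and $\Potts$ this is routine; for $\FK$ with $q>1$, where GKS is unavailable and one must use the cruder stochastic comparison, and for the $\XY$ model, where Ginibre's inequality must be invoked in the correct form, the verification requires slightly more care.
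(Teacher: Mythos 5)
Your proof is correct and follows the same basic strategy as the paper: lower bound the long-range two-point function by a one-dimensional nearest-neighbour object and observe that the latter's decay rate tends to zero as $\lambda\to\infty$. Two small points. First, the paper's mechanism for FK (and hence Potts/Ising via~\eqref{eq:Potts_FK_Corresp}) is the finite-energy/insertion-tolerance bound on the probability that a prescribed nearest-neighbour path is open, rather than stochastic domination of the full nn FK measure; both work for $q\geq 1$, though the finite-energy route avoids invoking edge-weight monotonicity of the whole measure. Second, "the second Griffiths inequality for \dots the Potts model" is not quite right: GKS II is not available for $q$-state Potts with $q>2$, and the monotonicity of $G^{\Potts}_\beta(0,n)$ in the couplings has to be obtained via the FK representation and FKG, exactly as you already do for FK itself. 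The XY step (Ginibre to kill off-path couplings, then integrate the resulting nn chain) is identical to the paper's. Your appeal to Ruelle--Dobrushin for positivity of $\nu^\ast$ is really an appeal to exponential decay of truncated correlations in one-dimensional short-range models, which holds but is a slightly different statement than analyticity of the pressure; the paper simply asserts $\nu(\lambda)>0$ as known. None of these affect the correctness of the argument.
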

\begin{proof}
	In all the models \(\{\Ising, \Potts, \FK, \XY\}\), \(\nu(\lambda) > 0\) for any \(\lambda > 0\) when \(d=1\).
	The claim is thus an easy consequence of the finite-energy property for FK percolation: bound \(\Phi^{\FK}(0\leftrightarrow x)\) from below by the probability that a given minimal-length nearest-neighbor path \(\gamma\) is open, the probability of which is seen to be at least \(p_\beta^{\norm{x}_1}\) with \(\lim_{\beta\to\infty} p_\beta = 1\).
	A similar argument is available for the \(\XY\) model: set all coupling constants not belonging to \(\gamma\) to \(0\) by Ginibre inequalities and explicitly integrate the remaining one-dimensional nearest-neighbor model to obtain a similar bound.
\end{proof}

\begin{lemma}\label{lem:nontrivial_mass_gap_regim_GFF_KRW}
	Suppose \(*\in\{\GFF, \KRW\}\). Suppose either \(d=1\) or \(d\geq 3\) and \(\lambda^{*}_{c}=\lambda^{*}_{\exp}\). Then, \(\lambda^{*}_{\sat} < \lambda^{*}_{\exp}\).
\end{lemma}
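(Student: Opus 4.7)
Since the 2-point functions of KRW and GFF are related by $G^{\GFF}_\lambda = \lambda G^{\KRW}_\lambda$ via~\eqref{eq:GFF_to_KRW}, the two models share the same inverse correlation length and hence the same values of $\lambda_{\sat}$ and $\lambda_{\exp}$; it is therefore enough to treat $* = \KRW$.

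For $d \geq 3$ with $\lambda_c = \lambda_{\exp} = 1$, the plan is to evaluate $\nu_s$ at the endpoint $\lambda_c$ and transfer the information downwards by left-continuity. At $\lambda = 1$, $G^{\KRW}_1(0,x) = \sum_{k \geq 0} P(S_k = x)$ is the Green function of the random walk with step distribution $P(x) = J_{0x}$, which is symmetric (hence centered) and has all exponential moments. Transience in $d \geq 3$ together with the standard local CLT / Fourier asymptotics gives $G^{\KRW}_1(0,x) \asymp \|x\|^{-(d-2)}$; in particular $G^{\KRW}_1$ is pointwise finite and $\nu_s(1) = 0 < |s|$ for every direction $s$. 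The KRW satisfies~\ref{hyp:sub_mult} with $a_\lambda \equiv 1$ by direct path concatenation, so the hypotheses of Lemma~\ref{lem:nu_left_cont} are met at $\lambda' = \lambda_c$, and left-continuity yields $\nu_s(\lambda) \to 0$ as $\lambda \nearrow \lambda_{\exp}$. Therefore $\nu_s(\lambda) < |s|$ on a left neighborhood of $\lambda_{\exp}$, which is exactly $\lambda_{\sat}(s) < \lambda_{\exp}$.

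For $d = 1$, the plan is a direct generating function computation. Setting $\hat P(t) = \sum_x \psi(x) e^{tx - |x|}$, the identity
\begin{equation*}
\sum_{x \in \bbZ} e^{tx} G^{\KRW}_\lambda(0,x) = \frac{\lambda \hat P(t)}{1 - \lambda \hat P(t)}
\end{equation*}
gives $\nu_1(\lambda) = \sup\{t \geq 0 : \lambda \hat P(t) < 1\}$, and combined with $\hat P(0) = 1$ this forces $\lambda_{\exp}^{\KRW} = 1$. Taking $s = 1$, the unique dual vector is $t = 1$ and $\tilde\Xi(|\cdot|, \psi, t) = \hat P(1)$. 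If $\psi$ is not summable then $\hat P(1) = \infty$, so Theorem~\ref{thm:main} gives $\lambda_{\sat}(s) = 0$ while $\lambda_{\exp} > 0$ by Lemma~\ref{lem:lambda_equal_zero}. If $\psi$ is summable, Lemma~\ref{lem:SaturationKRW} identifies $\lambda_{\sat}(s) = 1/\hat P(1)$; combining the normalization $\sum_x \psi(x) e^{-|x|} = 1$ with $\psi(0) = 0$ and the symmetry $\psi(-x) = \psi(x)$, a short manipulation gives
\begin{equation*}
\hat P(1) - 1 = \sum_{x > 0} \psi(x) (1 - e^{-x})^2 > 0,
\end{equation*}
so $\lambda_{\sat}(s) = 1/\hat P(1) < 1 = \lambda_{\exp}$.

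The main obstacle is the $d \geq 3$ case: one needs to know that $G^{\KRW}_{\lambda_c}$ decays only polynomially in $\|x\|$. Once the 2-point function at $\lambda_c = 1$ is identified with a genuine random walk Green function, this follows from standard transience / local CLT results, and the remaining hypotheses of Lemma~\ref{lem:nu_left_cont} (pointwise finiteness of $G^{\KRW}_{\lambda_c}$ and $a_\lambda$ bounded away from $0$ near $\lambda_c$) are immediate. The $d = 1$ argument is purely algebraic once Lemma~\ref{lem:SaturationKRW} is in hand.
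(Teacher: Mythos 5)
Your proof is correct and follows essentially the same route as the paper: reduce \(\GFF\) to \(\KRW\) via \(G^{\GFF}_\lambda = \lambda G^{\KRW}_\lambda\); for \(d\geq 3\) use finiteness of the Green function at \(\lambda_c=1\) together with its polynomial decay to get \(\nu_s(\lambda_c)=0\) and conclude by left-continuity (Lemma~\ref{lem:nu_left_cont}); for \(d=1\) use Lemma~\ref{lem:SaturationKRW} to identify \(\lambda_{\sat}=\tilde\Xi^{-1}\) and a short generating-function argument to show \(\lambda_{\exp}=1\). Your algebraic identity \(\hat P(1)-1=\sum_{x>0}\psi(x)(1-e^{-|x|})^2>0\) is equivalent to the paper's observation that \(\lambda_{\sat}^{-1}=\sum_n p(n)\cosh(n|1|)>1\) with \(p\) a probability measure, so the two are just different packagings of the same computation.
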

\begin{proof}
	We treat only the KRW as extension to the GFF is immediate.
	Suppose first that \(d\geq 3\).
	Then, \(G_{\lambdac}(x,y)\) is finite for any \(x,y\in\bbZ^d\) and does not decay exponentially fast.
	So, \(\nu(\lambdac)\) is well defined and equals \(0\).
	Left continuity of \(\nu\) and the assumption \(\lambdac=\lambdaqlr\) conclude the proof.
	
	For \(d=1\) we use the characterization of Lemma~\ref{lem:SaturationKRW}.
	By our choice of normalization for \(J\) and the definition of \(\lambda_{\sat}^{\KRW}\) and \(\surcharge_t\),
	\begin{gather*}
		2\sum_{n\geq 1} \psi(n) e^{-n|1|} = 1 = \lambdac
		\quad\text{ and }\quad
		\lambda_{\sat}^{\KRW} = \Bigl( \sum_{n\geq 1} \psi(n) (1 + e^{-2n|1|}) \Bigr)^{\!-1}.
	\end{gather*}
	In particular, defining a probability measure \(p\) on \(\bbN\) by \(p(n) = 2\psi(n) e^{-n|1|}\), one obtains
	\begin{equation*}
		\lambda_{\sat}^{\KRW} = \Bigl( \sum_{n\geq 1} p(n)\cosh(n|1|) \Bigr)^{\!-1} < 1 = \lambdac^{\KRW}.
	\end{equation*}
	The conclusion will follow once we prove that $\lambda_{\exp}^{\KRW}=1$. Fix $\lambda<1$ and $\delta >0$. Then 
	\[
	\sum_{n\in\mathbb{Z}} e^{\delta n} G^{\KRW}_{\lambda}(0,n)
		=
		\sum_{n\in\mathbb{Z}} e^{\delta n} \sum_{k\geq 1} \sum_{\substack{y_1, \dots, y_k\in\mathbb{Z} \setminus \{0\} \\ \sum_{i=1}^k y_i= n }} \prod_{i=1}^k \lambda J_{0,y_{i}}
		=\sum_{k=1}^{\infty}\Bigl(\lambda\sum_{y\neq 0}J_{0,y}e^{\delta y}\Bigr)^{\!\!k}.
	\]
By our choice of normalization for $J$ and the fact that $J_{0,y}$ has exponential tails, it is possible to find $\delta$ small enough such that the sum over $k$ is finite, which proves that $\lambda_{\exp}^{\KRW}=1$.
\end{proof}

\begin{lemma}\label{lem:nontrivial_mass_gap_regim_AnyD}
	Suppose \(d>1\) and consider Bernoulli percolation or the Ising model. Suppose \(\lambdaqlr=\lambdac\). 
	Then, there exists \(\lambda_0 \in [0, \lambdaqlr)\) such that, for any \(s\in\bbS^{d-1}\) and \(\lambda \in (\lambda_0, \lambdaqlr)\),
	\begin{equation*}
		\nu_s(\lambda) < |s|.
	\end{equation*}
\end{lemma}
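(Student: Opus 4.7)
The plan is to run a contradiction argument through the susceptibility \(\chi(\lambda):=\sum_{x\in\bbZ^d} G_\lambda(0,x)\). Since \(|\cdot|\) is a norm on \(\bbR^d\) equivalent to \(\norm{\cdot}\), the constant \(c_0:=\min_{s\in\bbS^{d-1}}|s|\) is strictly positive; combining this with Lemma~\ref{lem:rate_equiv_directions} gives \(\nu_s(\lambda) \leq \nu_+(\lambda) \leq d\,\nu_-(\lambda)\) for every \(s\). Hence the conclusion follows from showing \(\lim_{\lambda\nearrow\lambdac}\nu_-(\lambda)=0\): once \(d\,\nu_-(\lambda)<c_0\), one has \(\nu_s(\lambda)<|s|\) in every direction, and no saturation can occur.

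I would then suppose, for contradiction, that this limit is strictly positive. Because \(\lambda\mapsto\nu_-(\lambda)\) is non-increasing on \([0,\lambdac)\) (from Claim~\ref{claim:nu_monotone}), this produces a uniform lower bound \(\nu_-(\lambda)\geq\nu_0>0\) for all \(\lambda\in[0,\lambdac)\). For Ising and Bernoulli percolation, the submultiplicativity constant \(a_\lambda\) in~\ref{hyp:sub_mult} can be taken equal to \(1\): for Ising via the Griffiths--Kelly--Sherman inequality applied with \(A=\{x,z\}\) and \(B=\{z,y\}\) (using \(\sigma_z^2=1\)), and for Bernoulli via FKG applied to the increasing events \(\{0\leftrightarrow z\}\) and \(\{z\leftrightarrow x\}\). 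A standard subadditive bootstrap (Fekete's lemma applied to \(f(nx):=-\log G_\lambda(0,nx)\), which is subadditive in \(n\)) then yields
\[
	G_\lambda(0,x)\leq e^{-\nu_x(\lambda)} \leq e^{-\nu_0\norm{x}},
\]
where \(\nu_x(\lambda):=\norm{x}\,\nu_{x/\norm{x}}(\lambda)\) is the norm from Claim~\ref{claim:nu_norm}. Summing over \(x\in\bbZ^d\) produces a uniform upper bound \(\chi(\lambda)\leq C(\nu_0,d)<\infty\) valid throughout \([0,\lambdac)\).

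The contradiction then comes from the sharpness of the phase transition: for Bernoulli percolation (Aizenman--Barsky, Menshikov) and for the Ising model (Aizenman--Barsky--Fern\'andez), it is classical that \(\chi(\lambda)\to\infty\) as \(\lambda\nearrow\lambdac\). This is incompatible with the uniform bound produced above, so \(\nu_-(\lambda)\to 0\), and the reduction in the first paragraph closes the argument (recalling that the hypothesis \(\lambdaqlr=\lambdac\) is precisely what lets us identify the endpoint where sharpness applies).

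The main obstacle is converting a lower bound on \(\nu_-(\lambda)\) into a uniform-in-\(\lambda\) exponential upper bound on \(G_\lambda(0,x)\). The identity \(a_\lambda\equiv 1\) for both models makes this essentially automatic through the Fekete bootstrap, which is why the argument is currently limited to Ising and Bernoulli; extending to Potts, FK with general \(q\geq 1\), or \(\XY\) would require controlling the growth of \(a_\lambda^{-1}\) as \(\lambda\nearrow\lambdac\) together with a corresponding divergence-of-susceptibility input, both of which are genuinely more delicate.
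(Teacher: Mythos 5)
Your argument is correct and relies on the same three ingredients as the paper: submultiplicativity with $a_\lambda=1$ for Ising (GKS) and Bernoulli (FKG), the direction-equivalence bound $\nu_+\le d\,\nu_-$ of Lemma~\ref{lem:rate_equiv_directions}, and divergence of the susceptibility at $\lambdac$ (the paper cites Duminil-Copin--Tassion rather than Aizenman--Barsky/Menshikov/ABF, but these are interchangeable for the present purpose). The only real difference is a packaging one: the paper first establishes $\nu_s(\lambdac)=0$ (by exactly the argument you run --- a finite $\nu$ at $\lambdac$ plus $a_{\lambdac}=1$ would force finite susceptibility via Fekete, contradicting divergence) and then invokes the auxiliary left-continuity statement Lemma~\ref{lem:nu_left_cont} to conclude $\nu_s(\lambda)\to 0$ as $\lambda\nearrow\lambdac$; you instead run the contradiction directly on $\lim_{\lambda\nearrow\lambdac}\nu_-(\lambda)$, which bypasses Lemma~\ref{lem:nu_left_cont} entirely (and with it the hypothesis that $G_{\lambdac}$ be well-defined, though that is harmless here). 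Both routes produce a single $\lambda_0$ uniform over $s\in\bbS^{d-1}$ for the same reason you note, namely that once $d\,\nu_-(\lambda)<\min_s|s|$ every direction is handled simultaneously. Your closing remark about the bottleneck for extending to general $q\ge 1$ FK, Potts, or XY --- controlling $a_\lambda^{-1}$ near $\lambdac$ while having a suitable sharpness input --- is also an accurate reading of why Theorem~\ref{thm:NotSatCloseToLambdaC} restricts to Ising and Bernoulli in $d\ge 2$.
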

\begin{proof}
	The existence of \(\lambda_0\) follows from Lemma~\ref{lem:nu_left_cont} and the fact that \(\nu_s(\lambdac) = 0\) which is obtained by equivalence of directions for \(\nu\) (Lemma~\ref{lem:rate_equiv_directions}) and divergence of the susceptibility at \(\lambdac\).
	The latter is proved for the Ising model and Bernoulli percolation in~\cite{Duminil-Copin+Tassion-2016}. The conclusion follows by the assumption \(\lambdaqlr=\lambdac\).
\end{proof}


\section{``Non-summable'' case}


In this section we consider directions \(s\in\bbS^{d-1}\) for which
\begin{equation}\label{eq:NonSummabilityCondition}
	\sum_{y\neq 0} \psi(y) e^{-\surcharge_t(y)} = +\infty,
\end{equation}
where \(t\) is any vector dual to \(s\).
We prove that saturation does not occur in direction \(s\) at any value of \(\lambda\), provided that the model at hand satisfies~\ref{hyp:J_path_lower_bnd}.

\medskip
Before proving the general claim, let us just mention that the claim is almost immediate when \(\psi(ns)\) is not uniformly bounded in \(n\).
Indeed, suppose \(\nu_s(\lambda)=|s|\).
Then, by~\ref{hyp:sub_mult}, \(G_{\lambda}(0,ns)\leq a_{\lambda}^{-1}e^{-\nu_s n}\) (using~\eqref{eq:nu_infimum}), while by~\ref{hyp:J_path_lower_bnd}, \(G_{\lambda}(0,ns)\geq C_{\lambda} \psi(ns)e^{-n|s|}\).
From these two assumptions and the assumption that \(\nu_s(\lambda)=|s|\), we deduce that
\begin{equation*}
	C_{\lambda} \psi(ns)e^{-n|s|}\leq G_{\lambda}(0,ns) \leq a_{\lambda}^{-1}e^{-n|s|},
\end{equation*}
which implies that \(\psi(ns)\) is bounded uniformly over \(n\).

\medskip
Let us now turn to a proof of the general case.

%
\subsection{Absence of saturation at any \(\lambda\)}
%

\begin{lemma}\label{lem:mass_gap_non_summable_surcharge}
	Suppose~\ref{hyp:J_path_lower_bnd} and~\ref{hyp:PsiQuasiIsotropic}. Let \(s\in\bbS^{d-1}\) and let \(t\) be a vector dual to \(s\). Assume that \(\partial\UnitBall\) is quasi-isotropic in direction \(s\) and that~\eqref{eq:NonSummabilityCondition} holds.
	Then, for any \(\lambda>0\), \(\nu_s(\lambda)<|s|\).
\end{lemma}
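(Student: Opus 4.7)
The plan is to use~\ref{hyp:J_path_lower_bnd} to reduce the problem to lower-bounding a weighted sum over self-avoiding walks from~$0$ to~$ns$, and then to exhibit enough such paths for the cumulative weight to beat $e^{-n|s|}$. The central algebraic identity is the \emph{surcharge decomposition}: for any walk $(y_1,\ldots,y_k)$ with $\sum_i y_i = ns$, duality $t\cdot s=|s|$ gives
\[
	\sum_{i=1}^{k} |y_i| \;=\; t\cdot(ns) + \sum_{i=1}^k \surcharge_t(y_i) \;=\; n|s| + \sum_{i=1}^k \surcharge_t(y_i).
\]
Setting $P(y):=\psi(y)\,e^{-\surcharge_t(y)}$, this turns the product of interactions into $\prod_i J_{y_i} = e^{-n|s|}\prod_i P(y_i)$, so~\ref{hyp:J_path_lower_bnd} rewrites
\[
	e^{n|s|} G_\lambda(0,ns) \;\geq\; c_\lambda \sum_k C_\lambda^k \sum_{\gamma\in\Gamma_k(0,ns)}\prod_{i=1}^k P(y_i)
\]
for any family $\Gamma_k$ of SAWs of length~$k$ linking $0$ and~$ns$. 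It thus suffices to produce, for $n$ large, such a family whose contribution grows exponentially in~$n$.

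For this, I would truncate: by the non-summability assumption~\eqref{eq:NonSummabilityCondition}, choose $R$ so large that
\[
	S_R \;:=\; \sum_{y\in B_R} P(y) \;>\; 1/C_\lambda,
	\qquad B_R := \{y\neq 0 : |y|\leq R,\ \surcharge_t(y)\leq R\}.
\]
Set $p_R := P|_{B_R}/S_R$ and let $\mu_R := \sum_y y\,p_R(y)$ denote its mean. If we momentarily drop self-avoidance, the unrestricted sum over $k$-step walks with increments in~$B_R$ equals $S_R^k\,p_R^{\ast k}(ns)$, and the local central limit theorem applied to~$p_R$ yields $p_R^{\ast k}(ns)\geq c\,k^{-d/2}$ as soon as $k\mu_R\approx ns$; this forces $k\sim n|s|/|\mu_R|$. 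Consequently,
\[
	\frac{1}{n}\log\bigl(e^{n|s|}G_\lambda(0,ns)\bigr) \;\geq\; \frac{|s|}{|\mu_R|}\log\bigl(C_\lambda S_R\bigr) + \sfo(1),
\]
which is strictly positive by our choice of~$R$, giving $\nu_s(\lambda)<|s|$.

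Two points need genuine care. First, and most importantly, one must verify that $\mu_R$ is essentially parallel to~$s$, so that the equation $k\mu_R\approx ns$ admits a solution with $k=\Theta(n)$ and the LCLT input really applies at~$ns$. This is where quasi-isotropy of $\partial\UnitBall$ in direction~$s$ together with~\ref{hyp:PsiQuasiIsotropic} enter: the bulk of the $P$-mass in~$B_R$ lives in the region where $y/|y|$ is close to~$s_0$ and $\surcharge_t(y)$ is bounded, and the local parametrization preceding~\eqref{eq:QuasiIsotropy} shows that, at the relevant scale, this region is approximately invariant under rotations around~$\bbR s$. Combined with the lattice symmetries of~$\psi$ and~$|\cdot|$, this should force $\mu_R$ to be essentially proportional to~$s$. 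I expect this alignment---not the LCLT itself---to be the main technical obstacle, since it must be made quantitative enough that any residual large-deviation cost of steering the walk exactly to~$ns$ can be absorbed into the $\sfo(1)$. Second, one must upgrade walks to SAWs without an exponential loss: a standard coarse-graining, tiling $\bbR^d$ by boxes of side~$3R$ and restricting to walks whose sequence of visited boxes is injective, suffices. Since the drift~$\mu_R$ is non-zero, the probability that the box-sequence fails injectivity is $O(1/k)$ by standard ballistic-walk estimates, so no exponential price is paid; together with the choice of~$R$ ensuring $C_\lambda S_R>1$, this would conclude the argument.
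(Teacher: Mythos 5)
Your high-level strategy is sound and starts exactly the same way as the paper: use \ref{hyp:J_path_lower_bnd}, factor out $e^{-n|s|}$ via the surcharge identity $\sum|y_i| = n|s| + \sum\surcharge_t(y_i)$, and reduce to showing that a weighted sum over SAWs grows exponentially. From that point on, however, your route diverges from the paper's and, as written, has a genuine gap.

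The gap is the alignment claim: you need $\mu_R := \sum_y y\,p_R(y)$ to be \emph{exactly} parallel to $s$. For the LCLT to give $p_R^{\ast k}(ns)\geq c\,k^{-d/2}$ with $k=\Theta(n)$, you need $\norm{ns - k\mu_R} = O(\sqrt{k}) = O(\sqrt{n})$; if $\mu_R$ has any fixed non-zero component $\mu_R^\perp$ perpendicular to $s$, then $\norm{ns - k\mu_R}$ grows like $k\,\norm{\mu_R^\perp}\sim n$, and the bound becomes exponentially small instead of polynomial. Since $\mu_R$ does not depend on $n$, you therefore need $\mu_R^\perp = 0$ exactly. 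The argument you sketch for this does not work: the lattice symmetries of $\psi$ and $|\cdot|$ form the finite hyperoctahedral group, whose stabilizer of a generic direction $s$ is trivial, so no symmetry forces $\mu_R\parallel s$; and quasi-isotropy is only a two-sided \emph{sandwich} $c_-g\leq f(\tau v)\leq c_+g$, not rotational invariance of the surcharge, so the $P$-mass in $B_R$ need not be balanced around the $s$-axis. In fact, because your truncation set $B_R=\setof{y\neq 0}{|y|\leq R,\ \surcharge_t(y)\leq R}$ is essentially a ball once $R$ is large (the surcharge constraint only trims the backward-pointing half), $p_R$ is not even concentrated near direction $s$, which makes the alignment problem worse, not better.

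The paper's proof avoids this entirely. Instead of producing paths via an LCLT for a drifted walk, it constructs the paths deterministically: each increment $y_k$ is taken in a truncated cone $\cone^R_{t,\delta}=\setof{y}{\surcharge_t(y)\leq\delta|y|,\ \normsup{y}\leq R}$, and the partial sums are constrained to stay in a fixed-width tube $\calT_R(s)$ around the $s$-axis. Because every step has $y_k\cdot t>0$, such a path is automatically self-avoiding (so the coarse-graining you invoke is unnecessary). The engine of the proof is then Lemma~\ref{lem:intermediaire}, which shows -- \emph{uniformly over the translate $x\in\calT_R(s)$} -- that the weight available to the next step inside the tube diverges as $R\to\infty$; this is where quasi-isotropy and~\ref{hyp:PsiQuasiIsotropic} are really used (via Lemma~\ref{lem:ExplicitCond}), namely to control the surcharge geometry and get uniformity over $x$, not to control any drift direction. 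Iterating this uniform lower bound $M\sim n/R$ times yields $e^{n|s|}G_\lambda(0,ns)\geq e^{cn}$, with no probabilistic input at all. If you want to salvage your approach, you would need to either prove $\mu_R^\perp=0$ (false in general) or build in a Cramér tilt to straighten the drift and show the tilt cost can be absorbed, which is substantially more work than the paper's direct tube construction.
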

\begin{proof}
	We use the notation of Section~\ref{sec:QuasiIsotropy}. In particular, we assume that \(\scrN\) and \(\epsilon\) have been chosen small enough to ensure that either \(g\equiv 0\), or \(g\) vanishes only at \(0\).
	
	Let \(\delta>0\) and consider the cone \(\cone_{t,\delta} = \setof{y\in\bbZ^d}{\surcharge_t(y) \leq \delta |y|}\).
	When \(g\) vanishes only at \(0\), we further assume that \(\delta\) is small enough to ensure that \(\cone_{t,\delta} \cap \partial\UnitBall \subset \scrN\) (this will be useful in the proof of Lemma~\ref{lem:ExplicitCond} below.)
	
	It follows from~\eqref{eq:psi_subexp} that
	\[
	\sum_{y\notin\cone_{t,\delta}} \psi(y) e^{-\surcharge_t(y)}
	\leq
	\sum_{y\notin\cone_{t,\delta}} \psi(y) e^{-\delta |y|} < \infty .
	\]
	Since we assume that~\eqref{eq:NonSummabilityCondition} holds, this implies that
	\[
	\sum_{y\in\cone_{t,\delta}} \psi(y) e^{-\surcharge_t(y)} = +\infty.
	\]
	Let \(\calT_R(s) = \setof{y\in\bbR^d}{\normsup{y-(y\cdot s)s} \leq R}\). We will need the following lemma.
	\begin{lemma}\label{lem:intermediaire}
	For any \(R>0\) large enough, we have 
	\begin{equation}\label{eq:DivergenceSubCone}
		\inf_{x\in\calT_R(s)} \sum_{y\in (x+\cone_{t,\delta}) \cap \calT_R(s)} \psi(y-x) e^{-\surcharge_t(y-x)} = \infty .
	\end{equation}
	\end{lemma}
	This lemma is established below.
	In the meantime, assume that the lemma is true. Then, one can find \(R>0\) such that
	\begin{equation}\label{eq:BigEnough}
		\inf_{x\in\calT_R(s)} \sum_{y\in (x+\cone^R_{t,\delta}) \cap \calT_R(s)} \psi(y-x) 	e^{-\surcharge_t(y-x)} \geq e^2 C_\lambda^{-1} .
	\end{equation}
	where we have introduced the truncated cone \(\cone^R_{t,\delta} = \setof{y\in\cone_{t,\delta}}{\normsup{s}\leq R}\).

	We are now going to construct a family of self-avoiding paths connecting \(0\) to \(ns\) in the following way: we first set \(M=\frac{n}{2R}\) and choose \(y_1, y_2, \dots, y_{M+1}\) in such a way that
	\begin{itemize}
		\item \(y_k \in \cone^R_{t,\delta}\) for all \(1\leq k\leq M\);
		\item for all \(1\leq m\leq M\), \(\sum_{k=1}^m y_k \in \calT_R(s)\);
		\item \(y_{M+1} = ns- \sum_{k=1}^M y_k\).
	\end{itemize}
	Note that, necessarily, \(s\cdot y_{M+1} \geq n/2\) and \(y_{M+1}\in \calT_R(s)\).
	We then consider  the set \(\Gamma\subset\saw(0,ns)\) of all self-avoiding paths \((0, y_1, y_1+y_2, \dots, y_1+\dots+y_{M}, ns)\) meeting the above requirements.
	
	We thus obtain that, by~\ref{hyp:J_path_lower_bnd},
	\begin{align*}
		e^{n|s|} G_{\lambda}(0,ns)
		&\geq
		C_{\lambda}\sum_{y_1}\dots\sum_{y_M} \prod_{k=1}^{M+1} C_{\lambda} \psi(y_k) e^{-|y_k| + y_k\cdot t} \\
		&=
		(C_{\lambda})^{M +2} e^{\sfo(n)} \sum_{y_1}\dots\sum_{y_M} \prod_{k=1}^{M} \psi(y_k) e^{-\surcharge_t(y_k)}\\
		&\geq
		(C_{\lambda})^{M +2} e^{\sfo(n)} \sum_{y_1}\dots\sum_{y_{M-1}} \prod_{k=1}^{M-1} \psi(y_k) e^{-\surcharge_t(y_k)} (e^2 C_{\lambda}^{-1})\\
		&\geq\cdots\geq
		(C_{\lambda})^{M +2} e^{\sfo(n)} (e^2 C_{\lambda}^{-1})^M = C_{\lambda}^2 e^{n/R +\sfo(n)},
	\end{align*}
	where the sums are over \( y_1, \ldots, y_M\) meeting the requirements for the path to be in \(\Gamma\).
	The term \(e^{\sfo(n)}\) in the second line is the contribution of \(y_{M+1}\) (\(y_{M+1}\in\calT_R(s)\) and its length is at least \(n/2\), so \(\surcharge_t(y_{M+1}) = \sfo(n)\) and \(\psi(y_{M+1})=e^{\sfo(n)}\)).
	For the third and fourth lines, we apply~\eqref{eq:BigEnough} \(M\) times.
\end{proof}	
	There only remains to prove Lemma~\ref{lem:intermediaire}.
	The latter is a direct consequence of the following quantitative version of~\eqref{eq:NonSummabilityCondition}, which can be useful to explicitly determine whether saturation occurs in a given direction; see Remark~\ref{rem:DirectionDepSaturation} in Section~\ref{sec:MainTheorems} for an example. Below, it will be convenient to set \(g^{-1}\equiv 1\) when \(g\equiv 0\).
	\begin{lemma}\label{lem:ExplicitCond}
	Under the assumptions of Lemma~\ref{lem:mass_gap_non_summable_surcharge}, Condition~\eqref{eq:NonSummabilityCondition} is equivalent to the condition
		\begin{equation}\label{eq:ExplicitCondition}
			\sum_{\ell\geq 1} \psi_0(\ell) (\ell g^{-1}(1/\ell))^{d-1} = \infty.
		\end{equation}
	\end{lemma}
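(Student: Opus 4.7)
The plan is to show that the two sums are comparable up to multiplicative constants by organizing the sum $\sum_{y\neq 0} \psi(y) e^{-\surcharge_t(y)}$ according to shells of constant norm $\ell \asymp |y|$, and then using the quasi-isotropic parametrization of $\partial\UnitBall$ near $s_0$ to count lattice points on each shell having small surcharge.

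First, by~\ref{hyp:PsiQuasiIsotropic} and the equivalence of $|\cdot|$ and $\normI{\cdot}$, it suffices to study $\sum_{y\neq 0} \psi_0(\lfloor |y|\rfloor) e^{-\surcharge_t(y)}$ up to universal constants. Then I would split the sum according to whether $y\in\cone_{t,\delta}$ or not, where $\delta>0$ is chosen small enough that $\cone_{t,\delta}\cap\partial\UnitBall \subset \scrN$ (the parametrization neighborhood fixed in Section~\ref{sec:QuasiIsotropy}; this is exactly the choice made at the start of the proof of Lemma~\ref{lem:mass_gap_non_summable_surcharge}). Outside the cone, $\surcharge_t(y) \geq \delta|y|$, and the sub-exponentiality~\eqref{eq:psi_subexp} makes $\sum_{y\notin\cone_{t,\delta}}\psi(y)e^{-\surcharge_t(y)}$ automatically finite; so the convergence question reduces to the contribution from $y\in\cone_{t,\delta}$.

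For this main piece, using the parametrization $u(\tau,v) = s_0 + \tau v - f(\tau v)\hat{t}$ of $\partial\UnitBall$ and the identities $s_0\cdot t = 1$, $v\cdot t = 0$, $\hat{t}\cdot t = \|t\|$, a point $y = \ell u(\tau,v)$ on the shell $|y|=\ell$ satisfies $\surcharge_t(y) = \ell\|t\|f(\tau v)$, which by~\eqref{eq:QuasiIsotropy} is comparable to $\ell g(\tau)$. The condition $\surcharge_t(y) \in [k, k+1)$ therefore becomes $\tau \asymp g^{-1}(k/\ell)$ (with the convention $g^{-1}\equiv 1$ when $g\equiv 0$, which correctly encodes that the whole angular sector near $s_0$ has vanishing surcharge). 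Counting lattice points $y$ with $|y|\in[\ell,\ell+1)$ and $y/|y|$ in the corresponding angular region yields, by standard slicing, a count of order $(\ell g^{-1}((k{+}1)/\ell))^{d-1} - (\ell g^{-1}(k/\ell))^{d-1}$: at radial distance $\ell$, the tangential extent in absolute units is $\ell\tau$, and the admissible tangential directions form a $(d{-}1)$-dimensional ball. The shell contribution is thus
\[
\asymp \psi_0(\ell)\sum_{k\geq 0} e^{-k}\bigl(\ell g^{-1}((k{+}1)/\ell)\bigr)^{d-1}.
\]
Convexity of $g$ forces $g^{-1}$ to be concave with $g^{-1}(0)=0$, so $g^{-1}(k/\ell)\leq k\,g^{-1}(1/\ell)$ for $k\geq 1$, and the sum over $k$ with factor $e^{-k}k^{d-1}$ converges. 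Hence each shell contributes $\asymp \psi_0(\ell)(\ell g^{-1}(1/\ell))^{d-1}$ (the lower bound coming from $k=0$), and summing over $\ell$ yields the claimed equivalence.

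The main technical obstacle is the lattice point count on each shell: one must verify that, in the anisotropic setting where curvatures along different tangential directions can differ wildly, the count of lattice points with prescribed surcharge is indeed controlled from above and below by the product of the radial width and a tangential $(d{-}1)$-volume of order $(\ell g^{-1}(k/\ell))^{d-1}$. Quasi-isotropy is designed to provide exactly the two-sided control on $f$ needed to make this slicing rigorous, while the degenerate case $g\equiv 0$ requires a separate check that the convention $g^{-1}\equiv 1$ recovers the correct order $\ell^{d-1}$ coming from the entire facet contributing to the zero-surcharge set. Once these two points are established, the equivalence between~\eqref{eq:NonSummabilityCondition} and~\eqref{eq:ExplicitCondition} follows from the two-sided bound on each shell.
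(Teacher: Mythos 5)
Your proof is correct in outline and rests on the same two pillars as the paper's: the geometric identity \(\surcharge_t(y)=\norm{t}\,|y|\,f(y^\perp/|y|)\) combined with quasi-isotropy to get \(\surcharge_t(y)\asymp |y|\,g(\norm{y^\perp}/|y|)\), and a convexity argument (\(g\) convex increasing, \(g(0)=0\)) that collapses a double sum to~\eqref{eq:ExplicitCondition}. The genuine difference is the choice of slicing variable. You index each shell \(|y|\asymp\ell\) by the surcharge level \(k=\lfloor\surcharge_t(y)\rfloor\), i.e.\ by \(\tau\asymp g^{-1}(k/\ell)\), and then must count lattice points in anisotropic annuli whose inner and outer radii both involve \(g^{-1}\). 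Because the quasi-isotropy constants \(c_\pm\) differ, the telescoping quantity \((\ell g^{-1}(\tfrac{k+1}{\ell}))^{d-1}-(\ell g^{-1}(\tfrac{k}{\ell}))^{d-1}\) is not literally a two-sided count for each fixed \(k\); it works as an upper bound (up to constants), while the matching lower bound must be extracted from a single well-chosen level, essentially \(k=0\). You correctly flag this as the technical crux. The paper sidesteps it by slicing instead by the tangential magnitude \(r=\norm{y^\perp}\in[r,r+1)\): on the shell \(\normI{y}=\ell\) the lattice-point count is then simply \(\asymp r^{d-2}\), with no dependence on \(g\), and all the \(g\)-dependence sits in the weight \(e^{-\surcharge_t(y)}\asymp e^{-c\ell g(c'r/\ell)}\); the same convexity inequality, applied as \(kg(g^{-1}(1/\ell))\leq g(kg^{-1}(1/\ell))\), then reduces the \(r\)-sum to a geometrically decaying series dominated by \((\ell g^{-1}(1/\ell))^{d-1}\). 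Your slicing is the change of variable \(r\asymp\ell g^{-1}(k/\ell)\) of the paper's, so the content is the same, but the paper's keeps the lattice-point count elementary and confines the anisotropy to the exponent. Your handling of the degenerate case \(g\equiv 0\) via the convention \(g^{-1}\equiv 1\) matches the paper's Case~1 exactly.
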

	\begin{proof}
	We shall do this separately for the case \(g\equiv 0\) (\(s_0\) belongs to the ``interior'' of a facet of \(\partial\UnitBall\)) and when \(g\) vanishes only at \(0\).
	
	\medskip
	\textbf{Case 1: \(\boldsymbol{g\equiv 0}\).}
	In this case, we can find \(\eta>0\) such that \(\surcharge_t(y) = 0\) for all \(y\) in the subcone \(\scrC_\eta(s) = \setof{\lambda s'}{\lambda>0,\, s'\in\bbS^{d-1},\, \norm{s'-s} < \eta}\). 
	In particular, for all \(y\in\scrC_\eta(s)\),
	\[
		\psi(y) e^{-\surcharge_t(y)} = \psi(y),
	\]
	from which the claim follows immediately using~\ref{hyp:PsiQuasiIsotropic}.
	
	\medskip
	\textbf{Case 2: \(\boldsymbol{g > 0}\).}
	We now assume that \(g(\tau) > 0\) for all \(\tau\neq 0\) (remember the setting of Section~\ref{sec:QuasiIsotropy}).
	For simplicity, let \(u\in\bbZ^d\) be such that \(\normsup{u}=R\) and write \(\scrC_u = \cone_{t,\delta} \cap \bigl(u + \calT_R(s)\bigr)\) for the corresponding sub-cone. 
	
	Given \(y\in\cone_{t,\delta}\), we write \(y^\parallel = y\cdot \hat{t}\) and \(y^\perp = y - y^\parallel \hat{t}\). In particular, we have
	\[
		y^\parallel = \frac{|y|}{\norm{t}} - |y| f\biggl(\frac{y^\perp}{|y|}\biggr).
	\]
	This implies that
	\[
		\surcharge_t(y)
		= |y| - t\cdot y
		= |y| - \norm{t} y^\parallel
		= \norm{t} |y|\, f(y^\perp/|y|).
	\]
	We conclude that
	\begin{equation}\label{eq:boundsOnSurcharge}
		C_+ |y|\, g(\norm{y^\perp}/|y|)
		\geq
		\surcharge_t(y)
		\geq
		C_- |y|\, g(\norm{y^\perp}/|y|)
	\end{equation}
	where we have set \(C_\pm = c_\pm \norm{t}\).
	Using~\ref{hyp:PsiQuasiIsotropic}, we can write
	\begin{align*}
		\sum_{y\in\scrC_u} \psi(y) e^{-\surcharge_t(y)}
		\leq
		C_\psi^+\sum_{\ell\geq 1} \psi_0(\ell) \sum_{r\geq 0}  \sum_{\substack{y\in\scrC_u\\\normI{y}=\ell\\\norm{y^\perp}\in [r,r+1)}} 
		e^{-\surcharge_t(y)}
		\leq
		c_1 \sum_{\ell\geq 1} \psi_0(\ell) \sum_{r\geq 0}
		r^{d-2} 
		e^{-c_2\ell g(c_3r/\ell)} .
	\end{align*}
Let \(x=\frac{1}{c_3} \ell g^{-1}(1/\ell)\). The sum over \(r\) is easily bounded. : 
\begin{align*}
\sum_{r\geq 0}
		r^{d-2} 
		e^{-C_- \ell g(r/\ell)}
		&\leq
		\sum_{k\geq 0} \sum_{r=kx}^{(k+1)x} r^{d-2} e^{-c_2\ell g(c_3 r/\ell)} \\
		&\leq
		\sum_{k\geq 0} e^{-c_2 \ell g(k g^{-1}(1/\ell))} \sum_{r=kx}^{(k+1)x} r^{d-2} \\
		&\leq
		x^{d-1} \sum_{k\geq 0} (k+1)^{d-1} e^{-c_2 \ell g(k g^{-1}(1/\ell))} .
\end{align*}
Let us prove that the last sum is finite. Let \(h(k) = g(k g^{-1}(1/\ell))\). Notice that \(h(0)=g(0)=0\) and \(h(1)=1/\ell\). Since \(g\) is convex and increasing, \(h\) is convex and increasing as well. Therefore, convexity implies that
\begin{equation*}
	h(1)
	=
	h\bigl( \tfrac1k\cdot k + (1-\tfrac1k)\cdot 0 \bigr)
	\leq
	\tfrac1k h(k) + (1-\tfrac1k) h(0)
	=	
	\tfrac1k h(k) .
\end{equation*}
	Therefore, we get 
\begin{equation*}
	\sum_{k\geq 0} k^{d-1} e^{-c_2 \ell g(k g^{-1}(1/\ell))}
	\leq 
	\sum_{k\geq 0} (k+1)^{d-1} e^{-c_2 k} ,
\end{equation*}
which implies the following upper bound
\begin{equation*}
	\sum_{y\in\scrC_u} \psi(y) e^{-\surcharge_t(y)}
	\leq 
	c_4\sum_{\ell\geq 1} \psi_0(\ell) (\ell g^{-1}(1/\ell))^{d-1} .
\end{equation*}
Similarly, using the upper bound in~\eqref{eq:boundsOnSurcharge} (and once more~\ref{hyp:PsiQuasiIsotropic}), we get the following lower bound :
\begin{align*}
	\sum_{y\in\scrC_u} \psi(y) e^{-\surcharge_t(y)}
	&\geq
	C_\psi^-\sum_{\ell\geq 1} \psi_0(\ell) \sum_{r\geq 0}  \sum_{\substack{y\in\cone_{t,\delta}\\\normI{y}=\ell\\\norm{y^\perp}\in [r,r+1)}} e^{-\surcharge_t(y)} \\
	&\geq 
	C_\psi^-\sum_{\ell\geq 1} \psi_0(\ell) \sum_{r=0}^{\frac{1}{c_6}\ell g^{-1}(1/\ell)} r^{d-2} e^{-c_5\ell g(c_6 r/\ell)} \\
	&\geq 
	c_7 \sum_{\ell\geq 1} \psi_0(\ell) \sum_{r=0}^{\frac{1}{c_6}\ell g^{-1}(1/\ell)} r^{d-2} \\
	&\geq
	c_8 \sum_{\ell\geq 1} \psi_0(\ell)(\ell g^{-1}(1/\ell))^{d-1} . \qedhere
\end{align*}
\end{proof}


\section{Acknowledgments}


Dima Ioffe passed away before this paper was completed.
The first stages of this work were accomplished while he and the fourth author were stranded at Geneva airport during 31 hours. In retrospect, the fourth author is really grateful to easyJet for having given him that much additional time to spend with such a wonderful friend and collaborator.

YA thanks Hugo Duminil-Copin for financial support. SO is supported by the Swiss NSF through an early PostDoc.Mobility Grant. SO also thanks the university Roma Tre for its hospitality, hospitality supported by the ERC (ERC CoG UniCoSM, grant agreement n.724939). YV is partially supported by the Swiss NSF.


\appendix


\section{Proof of the assumptions}\label{app:Properties}


%
\subsection{Assumption~\ref{hyp:sub_mult}}
%

\begin{lemma}
	\ref{hyp:sub_mult} holds for: \(\KRW, \SAW, \Ising, \IsingPosFie, \Potts, \FK, \XY, \GFF\).
\end{lemma}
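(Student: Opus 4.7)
The plan is to verify~\ref{hyp:sub_mult} separately for each of the eight models, in each case invoking the structural tool appropriate to it (walk representation, GKS, FKG, Ginibre, or random current). The constant \(a_\lambda\) will equal \(1\) in most cases, \(1/\lambda\) for the GFF, \(q/(q-1)\) for Potts, and will depend on \((\beta,h)\) for Ising in a positive field.

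For the walk models, the inequality is immediate from the explicit representation: for \(\KRW\), concatenating a walk from \(x\) to \(z\) with one from \(z\) to \(y\) produces a walk from \(x\) to \(y\) whose weight factorizes, so \(G^{\KRW}_\lambda(x,y)\geq G^{\KRW}_\lambda(x,z)G^{\KRW}_\lambda(z,y)\), and the \(\GFF\) case follows at once from the identity \(G^{\GFF}_\lambda=\lambda G^{\KRW}_\lambda\) stated in~\eqref{eq:GFF_to_KRW}. For the spin and percolation models with a standard correlation inequality, the argument is equally short. For \(\Ising\) at \(h=0\), GKS II applied to \(A=\{x,z\}\) and \(B=\{y,z\}\), together with \(\sigma_z^2=1\), yields \(G^{\Ising}_\beta(x,y)\geq G^{\Ising}_\beta(x,z)G^{\Ising}_\beta(z,y)\). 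For \(\FK\) percolation with \(q\geq 1\), the inclusion \(\{x\leftrightarrow y\}\supset\{x\leftrightarrow z\}\cap\{z\leftrightarrow y\}\) combined with FKG gives the same inequality, which passes to \(\Potts\) via~\eqref{eq:Potts_FK_Corresp} with \(a_\lambda=q/(q-1)\). For \(\XY\), the expansion
\[
\cos(\theta_x-\theta_y)=\cos(\theta_x-\theta_z)\cos(\theta_y-\theta_z)+\sin(\theta_x-\theta_z)\sin(\theta_y-\theta_z)
\]
combined with Ginibre's inequalities for plane rotators (in the form \(\mu(\cos\phi_1\cos\phi_2)\geq\mu(\cos\phi_1)\mu(\cos\phi_2)\) and \(\mu(\sin\phi_1\sin\phi_2)\geq 0\) for the relevant linear combinations) gives \(a_\lambda=1\).

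The two harder cases are \(\IsingPosFie\) and \(\SAW\). For \(\IsingPosFie\), the truncated correlation \(G^{\IsingPosFie}(x,y)=\mu(\sigma_x\sigma_y)-\mu(\sigma_x)\mu(\sigma_y)\) is not directly handled by GKS. I would invoke Aizenman's random-current representation with a ghost vertex \(g\) carrying the magnetic field: the switching lemma rewrites \(G^{\IsingPosFie}(x,y)\) as the probability, in a pair of independent currents with appropriate sources, that \(x\) and \(y\) are connected without using \(g\). This event is increasing in the percolation induced by the currents, and an FKG-type argument then yields sub-multiplicativity with a constant \(a_\lambda>0\) depending on \(\beta\) and \(h\). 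For \(\SAW\), a bare concatenation of two SAWs need not be self-avoiding, so the \(\KRW\) argument fails. The plan is to restrict to half-space configurations: choose a hyperplane \(\Pi\) through \(z\), sum over SAWs from \(x\) to \(z\) confined to one closed half-space bounded by \(\Pi\) and over SAWs from \(z\) to \(y\) confined to the opposite one; such concatenations are automatically self-avoiding, and a Hammersley--Welsh-type argument adapted to the long-range setting shows that the resulting ``half-space'' two-point function is comparable to \(G^{\SAW}_\lambda\) up to a multiplicative constant, yielding \(a_\lambda>0\).

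The main obstacle I anticipate is the \(\SAW\) case: the Hammersley--Welsh bridge decomposition was originally designed for nearest-neighbor SAWs, and extending it to steps of arbitrary length drawn from an exponentially decaying distribution requires care in bounding the weight of the unfolded bridges, typically by exploiting the sub-exponential correction assumption~\eqref{eq:psi_subexp}. The \(\IsingPosFie\) case, while technically delicate, follows a by-now standard blueprint once the random-current representation with ghost site is set up; the remaining six models reduce to a direct application of a classical inequality.
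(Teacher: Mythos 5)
Your treatment of \(\Ising\), \(\FK\), \(\Potts\), and \(\XY\) matches the paper's: GKS/Ginibre for the spin models, FKG plus the inclusion \(\{x\leftrightarrow z\}\cap\{z\leftrightarrow y\}\subset\{x\leftrightarrow y\}\) for \(\FK\), and \eqref{eq:Potts_FK_Corresp} for \(\Potts\), all with \(a_\lambda=1\) (the \(q/(q-1)\) factors cancel). For \(\IsingPosFie\) the paper simply cites Graham's theorem, which gives the stronger statement \(a_\lambda=1\); your random-current/ghost-site plan is in the right spirit but reinvents that result.

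The \(\KRW\) argument is wrong, however, and this propagates to your \(\GFF\) constant. Concatenation of \(\gamma_1\in\walk(x,z)\) with \(\gamma_2\in\walk(z,y)\) is not injective into \(\walk(x,y)\): a walk visiting \(z\) exactly \(N\) times has \(N\) preimages, so in fact
\[
G^{\KRW}_\lambda(x,z)\,G^{\KRW}_\lambda(z,y)=\sum_{\gamma\in\walk(x,y)}N_z(\gamma)\,W(\gamma),
\]
which can exceed \(G^{\KRW}_\lambda(x,y)\). Indeed, taking \(x=y=z\) your claimed inequality becomes \(G^{\KRW}_\lambda(0,0)\geq G^{\KRW}_\lambda(0,0)^2\), i.e.\ \(G^{\KRW}_\lambda(0,0)\leq 1\), which fails for every \(\lambda>0\) since the trivial zero-length walk already contributes \(1\) and nonempty returning walks add a positive amount. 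The fix, and what the paper does, is to decompose at the \emph{first} hitting time of \(z\): by the strong Markov property \(G_\lambda(x,y)\geq P_x^m(T_z<T)\,G_\lambda(z,y)\) and \(G_\lambda(x,z)=P_x^m(T_z<T)\,G_\lambda(0,0)\), yielding \(a_\lambda=G_\lambda(0,0)^{-1}\) for both \(\GFF\) and (via \eqref{eq:GFF_to_KRW}) \(\KRW\), not \(a_\lambda=1\) and \(a_\lambda=1/\lambda\) as you claim.

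For \(\SAW\), your half-space/Hammersley--Welsh plan is a genuine detour with the difficulties you yourself flag, and in particular it is unclear how to handle a general splitting point \(z\) (Hammersley--Welsh naturally compares to the full two-point function when the break point is chosen along a coordinate direction, not for arbitrary \(z\)). The paper's argument is far simpler and avoids all of this: given \(\gamma\in\saw(x,z)\) and \(\gamma'\in\saw(y,z)\), follow \(\gamma\) until its first intersection \(\tau\) with \(\gamma'\), then follow \(\gamma'\) backwards from \(\tau\) to \(y\); this produces a genuine element of \(\saw(x,y)\), and the leftover pieces are two self-avoiding walks from \(z\) to \(\tau\). Summing gives \(G^{\SAW}_\lambda(x,z)G^{\SAW}_\lambda(z,y)\leq G^{\SAW}_\lambda(x,y)\sum_{\tau}G^{\SAW}_\lambda(0,\tau)^2\), i.e.\ \(a_\lambda=\bigl(\sum_\tau G^{\SAW}_\lambda(0,\tau)^2\bigr)^{-1}\), finite precisely on \([0,\lambdac^{\SAW})\) by definition of \(\lambdac^{\SAW}\).
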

\begin{proof}
	The desired inequality follows with \(a_{\lambda} =1\) from GKS/Ginibre inequalities for the Ising and XY models, from the FKG inequality for FK percolation (and thus for the Potts model using~\eqref{eq:Potts_FK_Corresp}).
	\(\IsingPosFie\) is the main claim in~\cite{Graham-1982} (still for \(a_{\lambda} =1\)).
	For the \(\GFF\), \ref{hyp:sub_mult} holds with
	\begin{equation*}
		a_{\lambda} = G_{\lambda}^{\GFF}(0,0)^{-1}
	\end{equation*}(for \(\lambda<\lambdac\)). Indeed, using the random walk representation~\eqref{eq:GFF_RW_Rep_Cov},
	\begin{align*}
		G_{\lambda}(x,y)
		&=
		\lambda \sum_{n\geq 0} P_{x}^m(S_n= y, T>n)\\
		&\geq
		\lambda \sum_{n\geq 0}\sum_{k=0}^{n} P_{x}^m(S_n= y,T_z =k, T>n)\\
		&=
		\lambda \sum_{k\geq 0}\sum_{n\geq k} P_{x}^m(T_z =k, T>k) P_{z}^m(S_{n-k}= y, T>n-k)\\
		&=
		P_{x}^m(T_z < T) G_{\lambda}(z,y) ,
	\end{align*}
	where \(T_z = \min\setof{k\geq 0}{S_k=z}\).
	Now, \(G_{\lambda}(x,z) = P_{x}^m(T_z < T) G_{\lambda}(0,0) \), from which the claim follows.
	The identity~\eqref{eq:GFF_to_KRW} implies that the inequality holds for \(\KRW\) with \(a_{\lambda} = G_{\lambda}^{\KRW}(0,0)^{-1}\).
	For \(\SAW\), one has the inequality with
	\begin{equation*}
		a_{\lambda} = \Big(\sum_{x\in \bbZ^d}G_{\lambda}^{\SAW}(0,x)^2\Big)^{-1}.
	\end{equation*}
	Informally: from \(\gamma\in \saw(x,z)\) and \(\gamma'\in \saw(y,z)\), build \(\gamma''\in \saw(x,y)\) by following \(\gamma\) until its first intersection with \(\gamma'\), denoted \(\tau\), and by then following \(\gamma'\) backward until reaching \(y\).
	The remaining sub-paths can obviously be split into two walks in \(\saw(z,\tau)\).
	Summing over \(\gamma,\gamma'\), one obtains \(G_{\lambda}^{\SAW}(x,z)G_{\lambda}^{\SAW}(y,z)\).
	\(\gamma''\) gives the \(G_{\lambda}^{\SAW}(x,y)\) contribution while summing over \(\tau\) gives the \(\sum_{\tau\in \bbZ^d} G_{\lambda}^{\SAW}(0,\tau)^2\) contribution.
\end{proof}

%
\subsection{Assumption~\ref{hyp:left_cont}}
%

\subsubsection{KRW, SAW, GFF}

Since \(G_{\lambda}^{\KRW}(0,x)\) and \(G^{\SAW}_{\lambda}\) are power series in \(\lambda\), monotonicity is clear.
Moreover, their radius of convergence is \(\lambdac\) so that, for any \(\lambda <\lambdac\), these 2-point functions are analytic (and in particular continuous).
The result for the GFF follows trivially from~\eqref{eq:GFF_to_KRW}.

\subsubsection{Potts/Ising models and FK percolation}

We only discuss the results for FK percolation, the claims for the Ising/Potts models following immediately from~\eqref{eq:Potts_FK_Corresp}.

For any \(n\in\bbN\), \(\Phi_{\Lambda_{n},\beta,q}^{\FK} (0\leftrightarrow x)\) is differentiable with derivative equal to 
\begin{equation}
	\dfrac{\dd}{\dd\beta} \Phi^{\FK}_{\Lambda_{n},\beta,q} (0\leftrightarrow x)
	= \sum_{\{u,v\}\subset\Lambda_n} \bigl(
	\Phi^{\FK}_{\Lambda_n,\beta,q} (0\leftrightarrow x \given \omega_{uv} = 1) -
	\Phi^{\FK}_{\Lambda_n,\beta,q} (0\leftrightarrow x \given \omega_{uv} = 0)
	\bigr)
	\geq 0 ,
\end{equation}
thanks to the FKG inequality.
It follows that \(\Phi^{\FK}_{\beta,q} (0\leftrightarrow x) = \lim_{n\to\infty} \Phi_{\Lambda_n,\beta,q}^{\FK} (0\leftrightarrow x)\) is non-decreasing in \(\beta\).

Let us prove that \(\Phi^{FK}_{\beta,q} (0\leftrightarrow x)\) is left-continuous.
It follows from the FKG inequality that, for any \(n\geq m\), \(\Phi_{\Lambda_{n+1},\beta,q}^{\FK} (0\xleftrightarrow{\Lambda_m} x) \geq \Phi_{\Lambda_n,\beta,q}^{\FK} (0\xleftrightarrow{\Lambda_m} x)\), 
where \(\{0\xleftrightarrow{\Lambda_m} x\}\) is the event that \(0\) and \(x\) are connected by a path of open edges inside \(\Lambda_m\).
Therefore
\begin{equation*}
	\Phi^{\FK}_{\beta,q} (0\leftrightarrow x)
	\geq
	\Phi_{\Lambda_n,\beta,q}^{\FK} (0\xleftrightarrow{\Lambda_n} x)
	\geq
	\Phi_{\Lambda_n,\beta,q}^{\FK} (0\xleftrightarrow{\Lambda_m} x) ,
\end{equation*}
where we used inclusion of events in the last inequality.
Taking the limits \(n\to\infty\) followed by \(m\to\infty\), we conclude that the sequence \(\Phi_{\Lambda_n,\beta,q}^{\FK} (0\xleftrightarrow{\Lambda_n} x)\) converges to \(\Phi_{\beta,q}^{\FK} (0\leftrightarrow x)\).
Moreover, remark that 
\begin{equation*}
	\Phi_{\Lambda_n,\beta,q}^{\FK} (0\xleftrightarrow{\Lambda_n} x)
	\leq
	\Phi_{\Lambda_{n+1},\beta}^{\FK}(0\xleftrightarrow{\Lambda_{n}} x)
	\leq
	\Phi_{\Lambda_{n+1},\beta,q}^{\FK}(0\xleftrightarrow{\Lambda_{n+1}} x) ,
\end{equation*}
where we used monotonicity in volume in the first inequality and inclusion of events in the second inequality.
Therefore, the sequence \(\Phi_{\Lambda_n,\beta,q}^{\FK} (0\xleftrightarrow{\Lambda_n} x)\) is non-decreasing and converges to \(\Phi_{\beta,q}^{\FK} (0\leftrightarrow x)\).
Each \(\Phi_{\Lambda_n,\beta,q}^{\FK} (0\xleftrightarrow{\Lambda_{n}} x)\) is continuous in \(\beta\), whence \(\Phi_{\beta,q}^{\FK} (0\leftrightarrow x)\) is left-continuous.

\subsubsection{Ising with positive field}

Let us first prove that \(G^{\IsingPosFie}_{\beta,h}\) is non-increasing in \(h\).
Fix \(\beta, h\geq 0\) and \(n\in\bbN\).
For a subset \(A\subset\bbZ^d\), let us write \(\langle\sigma_{A}\rangle = \mu^{\Ising}_{\Lambda_n,\beta,h} (\sigma_{A})\).
Then, the function \(G^{\IsingPosFie}_{\Lambda_n;\beta,h} (0,x) = \langle\sigma_0\sigma_x\rangle - \langle\sigma_0 \rangle \langle\sigma_x\rangle\) is differentiable in \(h\) with derivative
\begin{multline*}
	\dfrac{\dd}{\dd h} G^{\IsingPosFie}_{\Lambda_n;\beta,h} (0,x)\\
	= \sum_{i\in\Lambda_n}
	\langle\sigma_0\sigma_x\sigma_i\rangle
	- \langle\sigma_0\sigma_x\rangle \langle\sigma_i\rangle
	- \langle\sigma_0\sigma_i\rangle \langle\sigma_x\rangle
	- \langle\sigma_x\sigma_i\rangle \langle\sigma_0\rangle
	+ 2\langle\sigma_0\rangle \langle\sigma_x\rangle \langle\sigma_i\rangle
	\leq 0 ,
\end{multline*}
where we used the GHS inequality~\cite{Griffiths+Hurst+Sherman-1970}.
By taking the limit \(n\to\infty\), we get that \(G^{\IsingPosFie}_{\beta,h}\) is non-increasing in \(h\) (thus non-decreasing in \(\lambda=e^{-h}\)). 

Let us now prove that \(G^{\IsingPosFie}_{\beta,h}\) is right-continuous in \(h\).
Observe that it is enough to prove that, for \(A\subset\bbZ^d\), \(\mu^+_{\beta,h} (\sigma_A)\) is right-continuous in \(h\) (see~\cite[Chapter 3]{Friedli+Velenik-2017} for the definition of \(\mu_{\beta,h}^{+}\)).
Fix \(h > 0\) and let \((h_m)_{m\geq 1}\) be a non-increasing sequence of real numbers converging to \(h\).
It follows from the GKS inequalities that, for any \(n,m\in\bbN\), \(\mu^+_{\Lambda_n;\beta,h_m} (\sigma_A)\) is non-increasing in \(n\) and that 
\begin{equation}
	\dfrac{\dd}{\dd h} \mu^+_{\Lambda_n;\beta,h_m} (\sigma_A)
	=
	\sum_{i\in\Lambda_n} \mu^+_{\Lambda_n;\beta,h_m} (\sigma_A\sigma_i) - \mu^+_{\Lambda_n;\beta,h_m} (\sigma_A) \mu^+_{\Lambda_n;\beta,h_m} (\sigma_i)
	\geq 0.
\end{equation}
Therefore, \((\mu^+_{\Lambda_n;\beta,h_m} (\sigma_A))_{m,n\geq 1}\) is non-increasing in \(n\) and in \(m\). The limits can thus be interchanged:
\begin{align*}
	\lim_{m\to\infty} \mu_{\beta,h_m}^{\Ising} (\sigma_A)
	&=
	\lim_{m\to\infty} \lim_{n\to\infty} \mu^+_{\Lambda_n;\beta,h_m}(\sigma_A) \\
	&=
	\lim_{n\to\infty} \lim_{m\to\infty} \mu^+_{\Lambda_n;\beta,h_m} (\sigma_A)
	=
	\lim_{n\to\infty} \mu^+_{\Lambda_n;\beta,h} (\sigma_A)
	=
	\mu^{\Ising}_{\beta,h}(\sigma_A),
\end{align*}
where the third identity relies on the fact that \(\mu^{+}_{\Lambda_{n},\beta,\lambda}(\sigma_{A})\) is continuous in \(h\) and the first and last ones from the uniqueness of the infinite-volume Gibbs measure in non-zero magnetic field (see~\cite{Friedli+Velenik-2017} for instance).
We conclude that \(\mu^{\Ising}_{\beta,\lambda}\) is right-continuous in \(h\) (thus left-continuous in \(\lambda=e^{-h}\)). 

\subsubsection{XY model}

Fix \(n\in\bbN\) and \(\beta\geq 0\). Let \(\theta_{x,y} = \cos(\theta_{x}-\theta_{y})\).
It follows from the Ginibre inequalities~\cite{Ginibre-1970} that 
\begin{equation}
	\dfrac{\dd}{\dd\beta } G^{\XY}_{\Lambda_n;\beta} (0,x)
	=
	\sum_{\{y,z\}\subset\Lambda_n} \bigl(
	\mu^{\XY}_{\Lambda_n;\beta} (\theta_{0,x}\theta_{y,z})
	- \mu^{\XY}_{\Lambda_n;\beta} (\theta_{0,x}) \mu^{\XY}_{\Lambda_n;\beta} (\theta_{y,z}) \bigr)
	\geq 0 .
\end{equation}
Therefore, by taking the limit in \(n\), we get that \(G^{\XY}_{\beta}(0,x)\) is non-decreasing in \(\lambda=\beta\).

Let us turn to the proof of left-continuity of \(G^{\XY}_{\Lambda_n;\beta}\) in \(\lambda=\beta\).
Observe that it is enough to prove that, for any collection \((M_i)_{i\in\bbZ^d}\) of integers such that \(M_i=0\) for all but finitely many vertices \(i\in\bbZ^d\), \(\mu^{\XY}_{\beta} (\cos(M\theta))\) is left-continuous in \(\beta\). Here, we are using the notation \(M\theta = \sum_i M_i\theta_i\).

Fix \(\beta > 0\) and let \((\beta_m)_{m\geq 1}\) be a non-decreasing sequence of real numbers converging to \(\beta\). The same argument we used for the Ising model in a field will allow us to conclude once we know that \(n\mapsto \mu^{\XY}_{\Lambda_n;\beta} (\cos(M\theta))\) and \(m\mapsto \mu^{\XY}_{\Lambda_n;\beta_m} (\cos(M\theta))\) are both non-decreasing. But this is an immediate consequence of the Ginibre inequalities~\cite{Ginibre-1970}.

%
\subsection{Assumption~\ref{hyp:weak_SL}}
%

The assumption is obviously satisfied for \(\KRW\) (and therefore \(\GFF\) by~\eqref{eq:GFF_to_KRW}) and \(\SAW\).

\subsubsection{Potts/Ising models and FK percolation}

We prove the inequality~\eqref{eq:weak_SL} for FK percolation and use~\eqref{eq:Potts_FK_Corresp} to deduce the result for the Ising/Potts models.

The inequality follows from the finite-energy property of the model and the fact that \(x\leftrightarrow z\) implies that there exists \(y\neq x\) such that
\begin{enumerate*}[label=(\roman*)]
	\item \(\omega_{xy}=1\),
	\item \(y\) is connected to \(z\) without using the edge \(\{x,y\}\).
\end{enumerate*}
Denote this event \(\{y\xleftrightarrow{\{x, y\}^{\comp}} z\}\).
It is measurable with respect to the sigma-algebra generated by \(\{\omega_e\}_{e\neq\{x,y\}}\).
By a union bound, one then has
\begin{align*}
	\Phi^{\FK}_{\beta}(x\leftrightarrow z)
	&\leq
	\sum_{y\neq x} \Phi^{\FK}_{\beta}(y \xleftrightarrow{\{x, y\}^{\comp}} z) \Phi^{\FK}_{\beta}(\omega_{xy}=1 \given y \xleftrightarrow{\{x, y\}^{\comp}} z) \\
	&\leq
	\sum_{y\neq x} \bigl(1-e^{-\beta J_{xy}}\bigr) \Phi^{\FK}_{\beta}(y \leftrightarrow z) \\
	&\leq
	\sum_{y\neq x} \beta J_{xy} \Phi^{\FK}_{\beta}(y \leftrightarrow z).
\end{align*}
Iterating until \(y\) reaches \(z\) yields the result with \(\alpha =1\) in \(\Potts,\FK\) and \(\alpha=2\) for \(\Ising\).

\subsubsection{XY model}

The inequality is proven in~\cite{Brydges+Frohlich+Spencer-1982} for a vast class of \(O(N)\)-symmetric models with \(C=\alpha =N^{-1}\) (more precisely, it is a consequence of~\cite[Equation~(3.13)]{Brydges+Frohlich+Spencer-1982}).

\begin{remark}
	The random walk representation of~\cite{Brydges+Frohlich+Spencer-1982} for the spin \(O(N)\) model gives~\ref{hyp:weak_SL} with \(\lambda=N^{-1}\) as parameter (at fixed \(\beta\)). Moreover, a similar argument as the one used in the proof of Lemma~\ref{lem:A4_for_XY} gives~\ref{hyp:J_path_lower_bnd} for the spin \(O(N)\) models with \(\lambda = N^{-1}\). We did not include the \(O(N)\) model in the discussion as the lack of correlation inequalities (most notably~\ref{hyp:sub_mult}) makes the whole discussion more complicated and less homogeneous.
\end{remark}

\subsubsection{Ising with a positive field}

\begin{lemma}
	\label{lem:weakSL_IPF}
	Let \(\beta\geq 0\), \(h>0\) and set \(\lambda = \beta\cosh(\beta) e^{-h}\).
	Then,
	\begin{equation}
		G^{\IsingPosFie}_{\beta,h}(x,y) \leq G^{\SAW}_{\lambda}(x,y) \leq G^{\KRW}_{\lambda}(x,y).
	\end{equation}
\end{lemma}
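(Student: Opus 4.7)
The second inequality $G^{\SAW}_\lambda(x,y) \leq G^{\KRW}_\lambda(x,y)$ is immediate from the definitions, since $\saw(x,y)\subset\walk(x,y)$ and all summands in each series are nonnegative; all of the content lies in the first inequality.

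For $G^{\IsingPosFie}_{\beta,h}(x,y) \leq G^{\SAW}_\lambda(x,y)$ I would work in finite volume $\Lambda_N$ and take the weak limit at the end. The tool of choice is the random current representation combined with the classical ghost-site trick for the field: adjoin a vertex $\mathfrak{g}$ coupled to every $i\in\Lambda_N$ with strength $J_{i\mathfrak{g}}=h/\beta$, turning the Ising model at $(\beta,h)$ into a zero-field Ising model on $\Lambda_N\cup\{\mathfrak{g}\}$ at inverse temperature $\beta$. The switching lemma, applied with sources $\{x,y\}$ and $\emptyset$, then yields
\begin{equation*}
G^{\IsingPosFie}_{\beta,h}(x,y) = \mu^{\Ising}_{\beta,h}(\sigma_x\sigma_y)\cdot\mathbf{P}^{\{x,y\},\emptyset}\bigl(x\xleftrightarrow{\mathbf{n}_1+\mathbf{n}_2}y \text{ using only lattice edges}\bigr),
\end{equation*}
the $\mathfrak{g}$-avoidance of the connection being precisely what converts $\mu(\sigma_x\sigma_y)$ into the truncation $\mu(\sigma_x\sigma_y)-\mu(\sigma_x)\mu(\sigma_y)$. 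On this event there is a canonical self-avoiding backbone $\gamma\in\saw(x,y)$ joining $x$ to $y$ in $\mathbf{n}_1+\mathbf{n}_2$ (e.g., the lexicographically smallest one); summing over $\gamma$ and decoupling the spine from the remainder of the two currents by a BK-type inequality assigns each edge $\{\gamma_{k-1},\gamma_k\}$ a weight of at most $\tanh(\beta J_{\gamma_{k-1}\gamma_k})$ and each interior site $\gamma_k$ a ghost-avoidance cost of at most $1-\mu^{\Ising}_{\beta,h}(\sigma_{\gamma_k})$.

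To conclude I would invoke the elementary bounds $\tanh(\beta J)\leq \beta J\cosh(\beta)$ (valid because our normalization $\bar J=1$ forces $J_{ij}\leq 1$) and $1-\mu^{\Ising}_{\beta,h}(\sigma_i)\leq 1-\tanh(h)=e^{-h}/\cosh(h)\leq e^{-h}$, where $\mu(\sigma_i)\geq\tanh(h)$ follows from Holley-type stochastic domination of the Ising measure by the independent product measure at field $h$, together with the trivial bound $\mu^{\Ising}_{\beta,h}(\sigma_x\sigma_y)\leq 1$. Each step of the backbone then carries a weight of at most $\beta\cosh(\beta)e^{-h} J_{\gamma_{k-1}\gamma_k}=\lambda J_{\gamma_{k-1}\gamma_k}$, and summation over $\gamma\in\saw(x,y)$ delivers the claim. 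The main obstacle is the clean execution of the backbone/BK-type decoupling, which must not inflate the per-step weight; should this turn out to be awkward in the random current formalism, a natural fallback is Graham's path representation of truncated Ising correlations in a positive field~\cite{Graham-1982} (already invoked in this appendix to establish \ref{hyp:sub_mult} for $\IsingPosFie$), which expresses $G^{\IsingPosFie}_{\beta,h}$ directly as a sum over self-avoiding objects whose weights can be compared step-by-step with $\lambda J_{ij}$.
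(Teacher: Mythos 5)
Your high-level strategy — random currents with a ghost site, switching lemma to express the truncation as a ghost-avoiding connection, extraction of a self-avoiding backbone, then elementary bounds on the per-edge and per-vertex weights — is the same blueprint the paper follows (via \cite[Lemma~3.2]{Ott-2019}), and your arithmetic at the end is fine: \(\tanh(\beta J)\leq\sinh(\beta J)\leq\beta J\cosh(\beta)\) for \(J\leq 1\) and \(1-\tanh(h)\leq e^{-h}\) are exactly the estimates that produce \(\lambda=\beta\cosh(\beta)e^{-h}\). The genuine gap is the decoupling step. You invoke ``a BK-type inequality'' to separate the spine from the rest of \(\mathbf n_1+\mathbf n_2\), but the joint random-current measure is positively associated and not a product measure, so BK is not available to you; without it, there is no reason the conditional weight given the spine should factor into \(\tanh(\beta J)\) per edge and \(1-\mu(\sigma_i)\) per vertex, and you correctly flag this as the place the argument might break.

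The paper avoids this entirely by \emph{partitioning} rather than decoupling: one fixes the odd self-avoiding path \(\gamma\) (from the source constraint there always is one), and the total weight of currents having that backbone factorizes \emph{exactly} as \(\frac{Z_{\Lambda_g\setminus\gamma}}{Z_{\Lambda_g}}\prod_{i}\sinh(\beta J_{\gamma_{i-1}\gamma_i})\); the ratio of partition functions is then bounded by \(1\) using GKS (removing interactions can only decrease correlations, hence the ratio), and a finite-energy estimate on the ghost edges incident to \(\gamma\) produces the \(e^{-h}\) per step. No BK is needed because the backbone weight is an identity, not an inequality. Your fallback — rewriting everything via Graham's path expansion for truncated correlations in a field — is in fact what the cited Lemma~3.2 of~\cite{Ott-2019} does, so that route would close the gap; but as written, the BK step does not go through.
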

\begin{proof}
	The second inequality is trivial.
	The following proof is by no means self-contained.
	We refer to~\cite{Ott-2019} for notation and definitions of the objects.
	We use the same argument as in~\cite[Lemma 3.2]{Ott-2019} with the following replacement of the partitioning over clusters in the first current:
	the source constraint implies the existence of a self-avoiding path in \(\saw(x,y)\) using only edges with odd values of the current.
	The weight of such a path in finite volume is
	\begin{equation*}
		\frac{Z_{\Lambda_g\setminus \gamma}}{Z_{\Lambda_g}} \prod_{i=1}^{\abs{\gamma}} \sinh(\beta J_{\gamma_{i-1}\gamma_i}) \leq \prod_{i=1}^{\abs{\gamma}} \sinh(\beta J_{\gamma_{i-1}\gamma_i})
	\end{equation*}
	by the GKS inequality.
	A union bound, the same finite-energy argument as in~\cite[Lemma 3.2]{Ott-2019}, inclusion of sets and our normalization choice \(J_{ij}\leq 1\) yield the result.
\end{proof}

%
\subsection{Assumption~\ref{hyp:J_path_lower_bnd}}
%

The statement is immediate for \(\SAW\).

\subsubsection{GFF and KRW}

The desired inequality follows immediately from the identity~\eqref{eq:GFF_RW_Rep_Cov}, by restricting to self-avoiding trajectories of the random walk and imposing that \(T-1\) coincides with the time at which \(y\) is visited for the first time:
\[
	G_{\lambda}^{\GFF}(x,y)
	\geq
	\underbrace{\frac{m^2}{1+m^2}}_{\equiv c_\lambda} \sum_{\gamma\in\saw(x,y)}
	\Bigl( \underbrace {\frac{1}{1+m^2}}_{\equiv C_\lambda}\Bigr)^{\!\abs{\gamma}+1} \prod_{k=1}^{\abs{\gamma}} J_{\gamma_{k-1}\gamma_k}.
\]

\subsubsection{Potts model and FK percolation}

We prove~\ref{hyp:J_path_lower_bnd} for FK percolation and use the relation~\eqref{eq:Potts_FK_Corresp} between the Potts (and thus Ising) model and FK percolation to deduce the result for the Potts (and Ising) model.

Let \(\gamma=(u_0=x, u_1, \dots, u_{N-1}, u_N=y) \in \saw(x,y)\) and denote by \(\mathscr{O}_\gamma\) the event that the cluster of \(x\) (and \(y\)) is given by \(\gamma\):
\[
	\mathscr{O}_\gamma = \bigcap_{k=1}^N \{(u_{k-1},u_{k}) \text{ is open}\} \cap \bigcap_{k=0}^N \{(u_{k},v) \text{ is closed for all }v\neq u_{k-1},u_{k+1}\}.
\]
Since
\begin{equation}\label{eq:DeletionTolerance}
	\inf_{\eta\in\{0,1\}^{E_d}} \Phi^{\FK}_{\beta}(\omega_e=1 \given \omega_f=\eta_f \ \forall f\neq e) \geq \frac{e^{\beta J_e}-1}{e^{\beta J_e}-1+q}
	\geq \frac{\beta J_e}{e^\beta - 1 + q} ,
\end{equation}
where we used the fact that \(J_e\in [0, 1]\), it follows that
\begin{align*}
	\Phi^{\FK}_{\beta,q}(\mathscr{O}_\gamma)
	&\geq
	\prod_{k=1}^N \frac{e^{\beta J_{u_{k-1}u_k}}-1}{e^{\beta J_{u_{k-1}u_k}}-1+q}\cdot \Bigl( \prod_{v\neq 0} e^{-\beta J_{0v}} \Bigr)^{N+1} \\
	&\geq
	\underbrace{\frac{e^\beta-1+q}{\beta}}_{\equiv c_\lambda} \Bigl(\underbrace{\frac{\beta e^{-\beta}}{e^\beta-1+q}}_{\equiv C_{\lambda}}\Bigr)^{\!\!N+1} \cdot \prod_{k=1}^N J_{u_{k-1}u_k} ,
\end{align*}
thanks to the normalization assumption \(\sum_v J_{0v} = 1\).

\subsubsection{XY model}

\begin{lemma}
	\label{lem:A4_for_XY}
	The XY model satisfies \ref{hyp:J_path_lower_bnd} with \(c_{\lambda} = e^{-\beta
	}\) and \(C_{\lambda} = \frac{1}{2}\beta e^{-\beta}\).
\end{lemma}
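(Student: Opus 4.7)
My plan is to exploit the random current (Fourier) representation of the XY Gibbs measure, which makes the sum over self-avoiding paths appear naturally. Working in a finite volume \(\Lambda\supset\bigcup_{\gamma\in\Gamma}\gamma\) (the infinite-volume inequality then follows by monotone convergence), I expand each Boltzmann weight via the Jacobi--Anger identity
\[
e^{\beta J_e \cos\phi_e} = \sum_{n_e\in\mathbb{Z}} I_{n_e}(\beta J_e)\, e^{i n_e\phi_e}
\]
on every oriented edge \(e=(i,j)\) with \(\phi_e=\theta_i-\theta_j\), where \(I_n\) is the modified Bessel function of the first kind. Substituting into the numerator and denominator of \(\mu^{\XY}_{\Lambda;\beta}(\cos(\theta_x-\theta_y))\) and integrating each \(\theta_k\) against the uniform measure on \([0,2\pi)\) yields
\[
\mu^{\XY}_{\Lambda;\beta}(\cos(\theta_x-\theta_y)) = \frac{\sum_{n:\,\partial n=\delta_x-\delta_y}\prod_e I_{n_e}(\beta J_e)}{\sum_{n:\,\partial n=0}\prod_e I_{n_e}(\beta J_e)},
\]
where \(\partial n\) denotes the signed divergence of the integer current \(n\). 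All summands are non-negative since \(I_n(x)\geq 0\) for \(x\geq 0\).

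To each \(\gamma=(v_0=x,v_1,\ldots,v_N=y)\in\Gamma\) I associate the single-path current \(n^\gamma\) equal to \(+1\) on the edges of \(\gamma\) (oriented from \(x\) to \(y\)) and \(0\) elsewhere; this satisfies \(\partial n^\gamma=\delta_x-\delta_y\), and the map \(\gamma\mapsto n^\gamma\) is injective. Restricting the numerator to these contributions gives
\[
\text{numerator} \;\geq\; \sum_{\gamma\in\Gamma}\prod_{e\in\gamma}I_1(\beta J_e)\prod_{e\notin\gamma}I_0(\beta J_e).
\]
The elementary bounds \(I_1(x)\geq x/2\) (from the series) and \(I_0(x)\leq e^x\) (from the integral representation), combined with \(\beta J_e\leq\beta\) (which follows from the normalization \(\bar J=1\), forcing \(J_e\leq 1\)), yield the per-edge estimate
\[
\frac{I_1(\beta J_e)}{I_0(\beta J_e)} \;\geq\; \frac{\beta J_e}{2}\,e^{-\beta J_e} \;\geq\; \frac{\beta}{2}\,e^{-\beta}\,J_e \;=\; C_\beta\, J_e,
\]
matching the prescribed \(C_\beta=\tfrac{1}{2}\beta e^{-\beta}\) edge by edge.

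Combining these bounds gives
\[
G^{\XY}_\beta(x,y) \;\geq\; \frac{\prod_e I_0(\beta J_e)}{Z_\Lambda}\sum_{\gamma\in\Gamma} C_\beta^{|\gamma|}\prod_{e\in\gamma}J_e,
\]
so the lemma reduces to establishing \(\prod_e I_0(\beta J_e)/Z_\Lambda \geq e^{-\beta}=c_\beta\), equivalently the upper bound \(\sum_{n:\,\partial n=0}\prod_e(I_{n_e}/I_0)(\beta J_e) \leq e^\beta\) on the closed-current polymer partition function. This estimate is the main obstacle of the proof. I plan to tackle it by first invoking Ginibre's inequality to set to zero all couplings not used by some \(\gamma\in\Gamma\) (which can only decrease \(G^{\XY}_\beta(x,y)\), so the desired lower bound is preserved), reducing to a polymer model on a finite subgraph, and then controlling the resulting closed-current sum by telescoping vertex by vertex and exploiting the per-vertex normalization \(\sum_{y\neq 0}J_{0y}=1\), so that the product of local contributions is dominated by \(e^\beta\). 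The high-temperature assumption \(\lambda<\lambdac\) is used here to guarantee convergence of all intermediate sums.
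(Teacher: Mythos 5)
Your use of the Bessel-function (Jacobi–Anger) expansion is a genuinely different and, in principle, cleaner route than the paper's, which splits $\cos(\theta_i-\theta_j)=S_i^1S_j^1+S_i^2S_j^2$ and Taylor-expands to obtain a \emph{double} integer current $(n,m)$ à la Brydges–Fröhlich–Spencer. Both representations are valid, and your reduction of the edge weight to the ratio $I_1(\beta J_e)/I_0(\beta J_e)\geq\frac{\beta}{2}e^{-\beta}J_e$ is correct and does recover the paper's $C_\lambda=\tfrac12\beta e^{-\beta}$. However, the argument as written contains a genuine gap at the step you yourself flag.

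The inequality you reduce to, $\prod_e I_0(\beta J_e)/Z_\Lambda\geq e^{-\beta}$, is false: writing $Z_\Lambda=\prod_e I_0(\beta J_e)\cdot\sum_{\partial n=0}\prod_e \bigl(I_{n_e}/I_0\bigr)(\beta J_e)$, the second factor is a polymer partition function whose logarithm grows linearly in $|\Lambda|$ for any $\beta>0$ (already the sum over single-loop closed currents contributes an amount proportional to $|\Lambda|$), so the ratio decays exponentially in the volume. The root cause is that your restriction of the numerator to the single-path current $n^\gamma$ — which is $1$ on $\gamma$ and $0$ on \emph{every} other edge of $\Lambda$ — forfeits all of the closed-loop entropy away from $\gamma$; you are effectively comparing the frozen vacuum $\prod_e I_0$ against the full $Z_\Lambda$, and the mismatch is extensive. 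The Ginibre reduction you propose as a remedy does not help: it lets you zero out couplings outside $\bigcup\Gamma$, but $\bigcup\Gamma$ can be all of $\Lambda$ (the lemma has to hold for arbitrary collections of SAWs, including the full set $\saw(x,y)$), so the reduced polymer sum can still be extensive, and you cannot apply Ginibre separately per $\gamma$ and then sum.

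The correct repair — and this is what the paper's argument does, in its own formalism — is to restrict the current to vanish only on edges \emph{incident to} $\gamma$ but not in $\gamma$, while leaving it free on the complement. Then the numerator contains a factor $\prod_{e\in\gamma}I_1(\beta J_e)\cdot\prod_{e\sim\gamma,\,e\notin\gamma}I_0(\beta J_e)\cdot Z_{\Lambda\setminus\gamma}$, and after dividing by $Z_\Lambda$ and using $I_0\geq 1$ one is left to bound $Z_{\Lambda\setminus\gamma}/Z_\Lambda$ from below. Because this ratio involves switching off only the couplings touching the $|\gamma|+1$ sites of $\gamma$, the paper's derivative–integration argument applies: $Z_{\Lambda\setminus\gamma}/Z_\Lambda=\exp\bigl(-\int_0^1\!\sum_{\{i,j\}\cap\gamma\neq\varnothing}\beta J_{ij}\,\mu^{\XY}_{\Lambda;\beta,s}(\cos(\theta_i-\theta_j))\,ds\bigr)\geq e^{-\beta(|\gamma|+1)}$, using $|\cos|\leq 1$ and $\sum_v J_{0v}=1$. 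The $e^{-\beta|\gamma|}$ is then absorbed into $C_\lambda$, and $c_\lambda=e^{-\beta}$ emerges from the remaining $+1$. With that replacement, your Bessel-function route goes through and is arguably more streamlined than the paper's; as it stands, though, the proof is incomplete at precisely the point that carries all the difficulty.
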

\begin{proof}
	We work in finite volume \(\Lambda\) and take limits afterwards. Define \(S_x = (S_x^1,S_x^2) = (\cos(\theta_x),\sin(\theta_x))\), so \(\cos(\theta_i-\theta_j) = S_i\cdot S_j\). By symmetry,
	\begin{equation*}
		\mu_{\Lambda;\beta}^{\XY}\bigl(\cos(\theta_x-\theta_y)\bigr) = 2\mu_{\beta}^{\XY}\bigl(S_x^1S_y^1\bigr).
	\end{equation*}
	Moreover, denoting \(E_{\Lambda}=\{\{i,j\}\subset \Lambda\}\), Taylor expansion of the Boltzmann weight gives
	\begin{equation*}
		e^{\beta J_{ij} \cos(\theta_i-\theta_j) } = \sum_{n,m:E_{\Lambda}\to\bbZ_+} w(n)w(m) \prod_{i\in \Lambda}(S_i^1)^{I_i(n)} (S_i^2)^{I_i(m)},
	\end{equation*}
	where \(w(n) = \prod_{\{i,j\}\in E_{\Lambda}}\frac{(\beta J_{ij})^{n_{ij}} }{n_{ij}! }\) and \(I_i(n) = \sum_{j\neq i} n_{ij}\). Denote \(\partial n = \setof{i}{I_i(n) \text{ odd}}\).
	
	Use then
	\begin{equation*}
		\int_{0}^{2\pi} \cos(\theta)^a \sin(\theta)^b d\theta =
		\begin{cases}
		2\frac{\Gamma(\frac{a+1}{2})\Gamma(\frac{b+1}{2})}{\Gamma(\frac{a+b+2}{2})} = 2\sfB\bigl(\frac{a+1}{2}, \frac{b+1}{2}\bigr) & \text{ if } a \text{ and } b \text{ are even},\\
		0 & \text{ else},
		\end{cases}
	\end{equation*}
	where \(\Gamma\) denotes the Gamma function and \(\sfB\) the Beta function, to obtain
	\begin{multline*}
		Z_{\Lambda;\beta}^{\XY}\mu_{\Lambda;\beta}^{\XY}\bigl(S_x^1S_y^1\bigr) = 2^{|\Lambda|}\sum_{\partial n = \{x,y\},\,\partial m = \varnothing}w(n)w(m)\sfB\Bigl(\frac{I_x(n)+2}{2}, \frac{I_x(m)+1}{2}\Bigr)\times \\
		\times \sfB\Bigl(\frac{I_y(n)+2}{2}, \frac{I_y(m)+1}{2}\Bigr) \prod_{i\notin \{x,y\}} \sfB\Bigl(\frac{I_i(n)+1}{2}, \frac{I_i(m)+1}{2}\Bigr).
	\end{multline*}
	Define the weights
	\begin{equation*}
		W(n,m) = w(n)w(m) \prod_{i} \sfB\Bigl(\frac{I_i(n)+1}{2}, \frac{I_i(m)+1}{2}\Bigr).
	\end{equation*}
	
	Let now \(n\in (\bbZ_+)^{E_{\Lambda}}\) be such that \( \partial n = \{x,y\}\).
	A straightforward exploration argument plus loop erasure implies the existence of a self avoiding path \(\gamma\in \saw(x,y)\) such that \(n\) takes odd values on the edges of \(\gamma\).
	Fix some total order on \(\saw(x,y)\) and denote \(\eta=\eta(n)\) the smallest odd self avoiding path in \(n\), which we assimilate with the function taking value \(1\) on edges of \(\eta\) and \(0\) else.
	One can then uniquely decompose \(n\) as \((n-\eta(n))+\eta(n)\) with \(\partial(n-\eta(n)) = \varnothing\).
	For a function \(n\in (\bbZ_+)^{E_{\Lambda}}\) with \(\partial n = \varnothing\) and a path \(\gamma\in\saw(x,y)\), we write \(n\sim \gamma\) if \(\eta(n+\gamma)=\gamma\).
	Obviously, \(n\sim \gamma\) whenever \(n\) is zero on all edges sharing an endpoint with \(\gamma\).
	Let then \(\Phi_W\) be the probability measure on pairs \(n,m\in(\bbZ_+)^{E_{\Lambda}}\) with \(\partial n = \partial m =\varnothing\) defined by
	\begin{equation*}
		\Phi_W(n,m)\propto W(n,m)\IF{\partial n = \partial m =\varnothing}.
	\end{equation*}
	One finally obtains
	\begin{align*}
		\mu_{\beta}^{\XY} \bigl( S_x^1S_y^1 \bigr) &= \sum_{\gamma\in\saw(x,y)} \Phi_W\Bigl( \prod_{k=1}^{|\gamma|}\frac{\beta J_{\gamma_{k-1}\gamma_k}}{n_{\gamma_{k-1}\gamma_k}+1} \prod_{k=0}^{|\gamma|}\frac{\Gamma\bigl( \frac{I_{\gamma_k}(n)+1}{2}+1\bigr)\Gamma\bigl(\frac{I_{\gamma_k}(n)+I_{\gamma_k}(m)+2}{2}\bigr)}{\Gamma\bigl(\frac{I_{\gamma_k}(n)+1}{2}\bigr)\Gamma\bigl(\frac{I_{\gamma_k}(n)+I_{\gamma_k}(m)+2}{2}+1\bigr)} \IF{n\sim \gamma}\Bigr)\\
		&\geq C\sum_{\gamma\in\saw(x,y)} \prod_{k=1}^{|\gamma|}C\beta J_{\gamma_{k-1}\gamma_k} \Phi_W\Bigl( \prod_{k=0}^{|\gamma|}\IF{I_{\gamma_k}(n) = I_{\gamma_k}(m)= 0}\Bigr),
	\end{align*}
	where \(C=\frac{\Gamma( 3/2)\Gamma(1)}{\Gamma(1/2)\Gamma(2)} = \frac{1}{2}\). For a fixed \(\gamma\in\saw(x,y)\) and \(s\in[0,1]\), define \(Z_{\Lambda;\beta}^{\XY,s}\) the partition function of the XY model with coupling constants on edges touching sites of \(\gamma\) multiplied by \(s\). One then has
	\begin{multline*}
		\Phi_W\Bigl( \prod_{k=0}^{|\gamma|}\IF{I_{\gamma_k}(n) = I_{\gamma_k}(m)= 0}\Bigr) = \frac{Z_{\Lambda;\beta}^{\XY,0}}{Z_{\Lambda;\beta}^{\XY,1}} \\
		=
		\exp(-\int_{0}^1 ds \sum_{\{i,j\}\cap\gamma \neq \varnothing} \beta J_{ij}\mu_{\Lambda;\beta}^{\XY}(\cos(\theta_i-\theta_j)) )
		\geq
		e^{-\beta(|\gamma|+1)},
	\end{multline*}
	which concludes the proof.
\end{proof}

\subsubsection{Ising with a positive field}

\begin{lemma}
	\label{lem:LB_SAW_IPF}
	\(\IsingPosFie\) satisfies~\ref{hyp:J_path_lower_bnd} with \(c_{\lambda} = 1\) and \(C_{\lambda} = \frac{1}{2}\beta e^{-\beta}\lambda\).
\end{lemma}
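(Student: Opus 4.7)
The plan is to mirror the proof of Lemma~\ref{lem:weakSL_IPF}, but reverse every inequality. I would work in a finite box \(\Lambda\) with a ghost vertex \(g\), replacing the external field \(h>0\) by a coupling of strength \(h\) between each site of \(\Lambda\) and \(g\), so that \(\Lambda_g=\Lambda\cup\{g\}\) carries an Ising model without field. Express \(G^{\IsingPosFie}_{\Lambda;\beta,h}(x,y)\) via its random-current representation on \(\Lambda_g\), as detailed in~\cite[Lemma~3.2]{Ott-2019}, and keep the partitioning used for the upper bound: the source constraint \(\partial n_1=\{x,y\}\) on the first current forces the existence of a self-avoiding path \(\gamma\in\saw(x,y)\), not using \(g\), consisting of edges where \(n_1\) is odd. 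Fix a total order on \(\saw(x,y)\) and extract the smallest such path. The contribution of a given \(\gamma\) to \(G^{\IsingPosFie}_{\Lambda;\beta,h}(x,y)\) then takes the form
\[
    W(\gamma) = \frac{Z_{\Lambda_g\setminus\gamma}}{Z_{\Lambda_g}}\prod_{i=1}^{|\gamma|}\sinh(\beta J_{\gamma_{i-1}\gamma_i}) \cdot F(\gamma),
\]
where \(F(\gamma)\geq 0\) collects the contributions from the second current and from the event that \(\gamma\) is indeed the smallest odd self-avoiding path. Restricting the sum to \(\gamma\in\Gamma\subset\saw(x,y)\) only decreases the bound.

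Next I would lower-bound each factor separately. First, \(\sinh(\beta J_e)\geq \beta J_e\) by convexity of \(\sinh\) on \([0,\infty)\). Second, I would bound the partition-function ratio by removing the vertices of \(\gamma\) one at a time: for any \(v\in\Lambda\), using \(\cosh(x)\leq e^{|x|}\) and the normalisation \(\sum_u J_{uv}=1\),
\[
    \frac{Z_{\Lambda_g}}{Z_{\Lambda_g\setminus\{v\}}} = \biggl\langle 2\cosh\Bigl(h + \beta\sum_{u}J_{uv}\sigma_u\Bigr)\biggr\rangle_{\Lambda_g\setminus\{v\}} \leq 2e^{h+\beta},
\]
hence \(Z_{\Lambda_g\setminus\gamma}/Z_{\Lambda_g}\geq (\tfrac{1}{2}e^{-\beta}\lambda)^{|\gamma|+1}\). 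Finally, a standard random-current computation shows that, once one conditions on \(\gamma\) being the smallest odd self-avoiding path, the residual weight \(F(\gamma)\) is at least \(1\) (or, at worst, produces an extra harmless constant per vertex of \(\gamma\)). Multiplying these lower bounds yields a per-edge factor \(\tfrac12\beta e^{-\beta}\lambda=C_\lambda\), together with one harmless boundary factor absorbed into \(c_\lambda\). Taking \(\Lambda\uparrow\bbZ^d\) concludes the proof.

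The main obstacle is verifying that the weight \(W(\gamma)\) produced by the path-extraction step of~\cite{Ott-2019} factorises precisely as above with a non-negative residual \(F(\gamma)\), so that the per-edge lower bounds multiply through without hidden combinatorial penalties. This is where the switching lemma and the bookkeeping of the second current (ensuring that \(F(\gamma)\geq 1\) after conditioning on \(\eta(n_1)=\gamma\)) play the key role. Once that factorisation is in place, the remainder reduces to the elementary inequalities \(\sinh(x)\geq x\) and \(\cosh(x)\leq e^{|x|}\), applied term by term, in direct parallel with the upper bound of Lemma~\ref{lem:weakSL_IPF}.
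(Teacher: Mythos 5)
Your pieces (the elementary bounds $\sinh(x)\ge x$ and $\cosh(x)\le e^{\abs{x}}$, and the one-vertex-at-a-time removal giving $\frac{Z_{\Lambda_g}}{Z_{\Lambda_g\setminus\{v\}}}\le 2e^{h+\beta}$) are fine, but the proposed strategy of ``reversing'' the path-extraction decomposition used in Lemma~\ref{lem:weakSL_IPF} has a genuine gap exactly where you flag it, and I don't think it can be repaired in that form. Path extraction is a \emph{partition} of the space of currents according to $\gamma=\eta(n_1)$: for an upper bound it is natural, because one may then throw away the constraint $\{\eta(n_1)=\gamma\}$, throw away the second-current contribution, and throw away the partition-function ratio $\le 1$. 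For a lower bound none of these steps reverses: the residual $F(\gamma)$ that you define collects an indicator of the event $\{\eta(n_1)=\gamma\}$ (which is $\le 1$) and the second-current normalization (another ratio $\le 1$), so $F(\gamma)\ge 1$ is not something one should expect, and the factorization $W(\gamma)=\frac{Z_{\Lambda_g\setminus\gamma}}{Z_{\Lambda_g}}\prod\sinh(\beta J)\cdot F(\gamma)$ is not even exact, since the constraint that $\gamma$ be the smallest odd SAW couples $n_1$ on $\gamma$ to $n_1$ off $\gamma$.

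The paper avoids this by using a different decomposition for the lower bound: it \emph{restricts} the random-current sum to the sub-event that the cluster of $0$ in the first current is literally the path $\gamma$ (nothing else attached). This gives, with no residual weight, the clean identity-turned-inequality
\[
G^{\IsingPosFie}_{\beta,h}(0,x)\;\ge\;\lim_{\Lambda\to\bbZ^d}\sum_{\gamma\in\Gamma}\ \prod_{i=1}^{\abs{\gamma}}\sinh(\beta J_{\gamma_{i-1}\gamma_i})\ \frac{Z_{\Lambda\setminus\bar\gamma}^2}{Z_\Lambda^2},
\]
where $\bar\gamma$ is $\gamma$ together with all incident edges, and the \emph{square} accounts for both currents. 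The ratio is then bounded below by a two-parameter interpolation (switch off the couplings touching $\gamma$ with a parameter $s\in[0,1]$, lower the field on $\gamma$ with $t\in[0,h]$), differentiating and using $\mu(\sigma_i)\le 1$ to obtain $\frac{Z_{\Lambda\setminus\bar\gamma}}{Z_\Lambda}\ge 2^{-\abs{\gamma}}e^{-(h+\beta)\abs{\gamma}}$, and then $\sinh(\beta J)\ge\beta J$ as you do. Your $\cosh\le e^{\abs{\cdot}}$ bound on the ratio would serve equally well once this correct decomposition is in place; the essential missing idea in your write-up is to restrict to the ``cluster $=\gamma$'' event rather than to try to reuse the path-extraction partition from the upper bound.
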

\begin{proof}
	We again proceed in a non-self-contained manner and refer to~\cite{Ott-2019} for the notation.
	Let \(\Gamma\subset\saw(x,y)\).
	We start from the random-current representation of \(G_{\beta,h}^{\IsingPosFie}(x,y)\) in finite volume (see~\cite[(9)]{Ott-2019}).
	Restrict the sum over clusters of \(0\) that are SAWs in \(\Gamma\) to obtain
	\begin{equation*}
		G_{\beta,h}^{\IsingPosFie}(0,x)
		=
		\lim_{\Lambda\to\bbZ^d} \sum_{\gamma\in\Gamma} \prod_{i=1}^{\abs{\gamma}} \sinh(\beta J_{\gamma_i,\gamma_{i-1}}) \frac{Z_{\Lambda\setminus \bar{\gamma}}^2}{Z_{\Lambda}^2},
	\end{equation*}
	where \(\bar{\gamma}\) is the set of vertices in \(\gamma\) together with all edges touching \(\gamma\).
	
	For any fixed \(\gamma\), define then \(Z_{\Lambda,s,t}\) for \(s\in [0, 1],t\in [0, h]\) by multiplying the coupling constants of edges touching sites of \(\gamma\) by \(s\) and by setting the magnetic field at sites of \(\gamma\) to be \(t\).
	On then has
	\begin{equation*}
		\frac{Z_{\Lambda\setminus \bar{\gamma}}}{Z_{\Lambda}}
		=
		\frac{2^{-\abs{\gamma}}Z_{\Lambda,0,0}}{Z_{\Lambda,1,h}}
		=
		2^{-\abs{\gamma}}\frac{Z_{\Lambda,0,0}}{Z_{\Lambda,0,h}} \frac{Z_{\Lambda,0,h}}{Z_{\Lambda,1,h}}.
	\end{equation*}
	We can then differentiate/integrate to get
	\begin{equation*}
		\frac{Z_{\Lambda,0,0}}{Z_{\Lambda,0,h}}
		=
		\exp(-\int_0^h \sum_{i\in\gamma} \mu_{\Lambda,0,t}(\sigma_i) \dd t )\geq e^{-h\abs{\gamma}}.
	\end{equation*}
	In the same line of idea,
	\begin{equation*}
		\frac{Z_{\Lambda,0,h}}{Z_{\Lambda,1,h}}
		=
		\exp(-\int_{0}^1 \sum_{i\in\gamma, j\notin\gamma} \beta J_{ij} \mu_{\Lambda,s,h}(\sigma_i) \dd s )
		\geq
		e^{-\beta\abs{\gamma}}.
	\end{equation*}
	Combining all these, one gets
	\begin{equation*}
		G_{\beta,h}^{\IsingPosFie}(0,x)
		\geq
		\sum_{\gamma\in\Gamma} \prod_{i=1}^{\abs{\gamma}} 2^{-1}\beta J_{\gamma_i,\gamma_{i-1}} e^{-\beta}e^{-h}. \qedhere
	\end{equation*}
\end{proof}

\bibliographystyle{plain}
\bibliography{EZ_bib}

\end{document}